\theoremstyle{plain}
\newtheorem{theorem}{Theorem}[section]
\newtheorem{lemma}[theorem]{Lemma}
\newtheorem{proposition}[theorem]{Proposition}
\newtheorem{corollary}[theorem]{Corollary}
\newtheorem{question}[theorem]{Question}
\theoremstyle{definition}
\newtheorem{remark}[theorem]{Remark}
\newtheorem{definition}[theorem]{Definition}
\newtheorem{example}[theorem]{Example}
\title{Simply Branched Covers of An Elliptic Curve and the Moduli Space of Curves}
\author{Dawei Chen} 
\date{}
\begin{document}
\bibliographystyle{plain}
\maketitle

\newcommand{\Mg}{$\overline{\mathcal M}_{g}\ $}

\begin{abstract}
Consider genus $g$ curves that admit degree $d$ covers to an elliptic
curve simply branched at $2g-2$ points. Vary a branch point and 
the locus of such covers forms a one-parameter family $W$. We investigate the geometry of $W$
by using admissible covers to study its slope, genus and components. The results can also be applied to study slopes of effective divisors on the moduli space of genus $g$ curves. 
\end{abstract}

\tableofcontents

\section{Introduction}
This paper is a sequel of \cite{C}. One of our motivations is to study the birational geometry of the moduli space of 
genus $g$ curves \Mg by using certain one-parameter families of covers of an elliptic curve. 

Recall that Pic$(\overline{\mathcal M}_{g})\otimes \mathbb Q$
is generated by the Hodge class $\lambda$ and boundary classes
$\delta_{i}, i = 0,1,\ldots, [\frac{g}{2}].$ Let $\delta$ denote the total boundary. From the perspective of 
birational geometry, it is natural to ask what linear combinations of $\lambda$ and $\delta$ are ample, nef or effective. 
It is well-known that $\lambda$ itself is nef but not ample. Moreover, Cornalba and Harris established \cite{CH} the following result. 

\begin{proposition}  
For any positive integers $a$ and $b$, the divisor class $a\lambda - b\delta$ is ample on \Mg if and only if $a > 11b$. 
\end{proposition}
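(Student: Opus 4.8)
The plan is to read ampleness through intersection with curves. By Kleiman's criterion it suffices to show that $a\lambda-b\delta$ has strictly positive degree on every irreducible complete curve $B\subset\overline{\mathcal M}_g$ (and lies in the interior of the nef cone). For such a $B$ I write $f\colon\mathcal X\to B$ for the associated family of stable genus $g$ curves and use the standard dictionary: $\lambda\cdot B=\deg f_*\omega_{\mathcal X/B}$, $\delta\cdot B$ equals the number of nodes in the fibers, and the relative Noether relation gives $K^2_{\mathcal X/B}=12\,\lambda\cdot B-\delta\cdot B$. The whole argument then turns into comparing $\lambda\cdot B$ and $\delta\cdot B$ for all $B$, and the number $11$ will emerge as the extremal slope.

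\emph{Necessity.} First I would exhibit one test curve forcing $a>11b$. Fix a general pointed curve $(C,p)$ of genus $g-1$ and a general pencil of plane cubics; resolving its nine base points yields a rational elliptic surface $S\to\mathbb P^1$ with a section $\sigma$. Attaching $C$ along $\sigma$ produces a family over $B=\mathbb P^1$ whose fibers are $C\cup_p E_t$. Here $\lambda\cdot B=\chi(\mathcal O_S)=1$, the twelve nodal cubics of the pencil give $\delta_0\cdot B=12$, and the constant separating node contributes $\delta_1\cdot B=-1$ (the section is a $(-1)$-curve, so the relevant $\psi$-class has degree $1$); hence $\delta\cdot B=11$. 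If $a\lambda-b\delta$ is ample then $(a\lambda-b\delta)\cdot B=a-11b>0$.

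\emph{Sufficiency.} Conversely, assume $a>11b$ and let $B$ be an arbitrary irreducible complete curve; I must check $a\,\lambda\cdot B>b\,\delta\cdot B$. If the generic fiber of $f$ is smooth, the family is non-isotrivial, so $\lambda\cdot B>0$, and the Cornalba--Harris/Xiao slope inequality $K^2_{\mathcal X/B}\ge\frac{4g-4}{g}\,\lambda\cdot B$ rewrites via Noether as $\delta\cdot B\le\frac{8g+4}{g}\,\lambda\cdot B$. Since $\frac{8g+4}{g}=8+\frac4g\le 10<11$ for $g\ge2$, positivity follows at once. If instead $B$ lies in the boundary, I would restrict to the stratum containing $B$, which is a finite quotient of a product of lower-dimensional moduli spaces of pointed curves $\overline{\mathcal M}_{g',n'}$ (the images of the gluing maps from $\overline{\mathcal M}_{g-1,2}$ and $\overline{\mathcal M}_{i,1}\times\overline{\mathcal M}_{g-i,1}$), pull back $\lambda$ and $\delta$ there in terms of $\lambda$, $\delta$ and the $\psi$-classes of the factors, and run an induction on $g$; the elliptic-tail family above is the extremal case, which is why the threshold is exactly $11$, and the strict inequality $a>11b$ keeps the estimate strict. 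Curves with $\lambda\cdot B=0$ are precisely those contracted by the semiample system $|\lambda|$ (the morphism to the Satake compactification); these lie in the boundary and satisfy $\delta\cdot B<0$, so $a\lambda-b\delta$ is positive on them as well.

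The main obstacle is the sufficiency direction, and inside it the slope inequality for generically smooth families, which is itself a substantial theorem — provable either by Xiao's fibered-surface method or by the Cornalba--Harris Hilbert-stability argument that a general pluricanonical Hilbert point is semistable. The secondary difficulty is the boundary bookkeeping: one must confirm that no boundary curve exceeds slope $11$ and that strict positivity, not merely nefness, survives the reduction to lower genus. Establishing that $11\lambda-\delta$ is the precise extremal nef class, dual to the elliptic-tail curve, is exactly what pins the constant at $11$ and yields the stated equivalence for positive integers $a,b$.
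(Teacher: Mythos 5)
The paper does not prove this proposition at all: it is quoted verbatim from Cornalba--Harris \cite{CH} as background in the introduction, so there is no internal argument to compare yours against. Your outline does follow the actual Cornalba--Harris strategy, and the necessity half is correct and essentially complete: the elliptic-tail pencil with $\lambda\cdot B=1$, $\delta_0\cdot B=12$, $\delta_1\cdot B=-1$ is exactly the extremal curve, and it forces $a>11b$.

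The sufficiency half, however, has genuine gaps beyond what you acknowledge. First, the slope inequality $\delta\cdot B\le(8+\tfrac4g)\lambda\cdot B$ for generically smooth families is not an auxiliary fact you may freely import here --- it is the technical heart of the Cornalba--Harris paper itself (proved there via Hilbert stability of pluricanonical points), so deferring it leaves the main content of the theorem unproved. Second, the boundary induction is only gestured at; one must actually pull $\lambda$ and $\delta$ back to $\overline{\mathcal M}_{g-1,2}$ and $\overline{\mathcal M}_{i,1}\times\overline{\mathcal M}_{g-i,1}$, pick up the $-\psi$ corrections from the self-intersection of the boundary, and prove a strengthened pointed statement (positivity of $a\lambda+c\psi-b\delta$ for suitable $c$) for the induction to close; the unpointed statement does not induct on itself. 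Third, your appeal to Kleiman is not quite enough: positivity of a divisor on every irreducible complete curve gives nefness, not ampleness, and ``lies in the interior of the nef cone'' is precisely what needs justification on a space of Picard rank $\lfloor g/2\rfloor+2$. The standard fix is to write $a\lambda-b\delta$ as $\epsilon(c\lambda-\delta)+\bigl((a-\epsilon c)\lambda-(b-\epsilon)\delta\bigr)$ for a class $c\lambda-\delta$ known to be ample from GIT and $\epsilon$ small, and then verify the second summand is nef by the curve computations; ample plus nef is ample. Also note the claim that $\lambda\cdot B=0$ forces $\delta\cdot B<0$ is a consequence of the boundary analysis, not an independent observation about the Satake map, so it cannot be used to shortcut that analysis.
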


People have associated a number called \emph{slope} to a divisor class as follows. 

\begin{definition}
Let $D = a\lambda - \sum_{i=0}^{[g/2]} b_{i}\delta_{i}$ be a divisor on $\overline{\mathcal M}_{g}$, $a, b_{i} > 0$. The slope of $D$ is given by $s(D) = \frac{a}{min \{b_{i}\}}.$
\end{definition} 

In terms of this notation, the above proposition says that $D = a\lambda - b\delta$ is ample if and only if $s(D) > 11$. However, if we want to subtract more boundary from $D$ and maintain $D$ effective, the question below is still open. 

\begin{question}
What is the lower bound of the slope $s(D)$ for all effective divisors $D$ on $\overline{\mathcal M}_{g}$?
\end{question}

The study about slopes of effective divisors on \Mg is interesting in a number of reasons. Here we will briefly mention some of them. 

Probably one of the most important facts is that the existence of an effective divisor with slope smaller than $\frac{13}{2}$ would imply that \Mg is of general type. 
The upshot is that by using such a divisor, we can write the canonical divisor $K_{\overline{\mathcal M}_{g}}$ as an ample divisor plus an effective divisor, 
since $K_{\overline{\mathcal M}_{g}}$ has slope exactly equal to $\frac{13}{2}$. Then $K_{\overline{\mathcal M}_{g}}$ is big, which tells that \Mg is of general type. 
Along these lines, Harris, Mumford and Eisenbud have constructed certain Brill-Noether divisors with small slopes. Eventually, they are able to show the following, cf. 
\cite{HMu}, \cite{H} and \cite{EH}. 

\begin{proposition}
\Mg is of general type when $g$ is greater than 23.  
\end{proposition}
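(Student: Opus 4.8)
By the reduction recalled in the introduction, an effective divisor of slope smaller than $13/2$ lets one write $K_{\overline{\mathcal M}_g}$ as an ample class (produced by the Cornalba--Harris proposition) plus an effective divisor, so that $K_{\overline{\mathcal M}_g}$ is big; hence it suffices, for each $g > 23$, to exhibit an effective divisor $D$ on $\overline{\mathcal M}_g$ with $s(D) < 13/2$. Here one also invokes the Harris--Mumford result that for $g \ge 4$ the finite quotient singularities of $\overline{\mathcal M}_g$ impose no adjunction conditions, so that pluricanonical forms extend to a resolution and bigness of $K_{\overline{\mathcal M}_g}$ genuinely forces general type. The plan is to take for $D$ a Brill--Noether divisor.

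Suppose first that $g+1$ is composite, and choose integers $r \ge 1$ and $d$ with $g+1 = (r+1)(g-d+r)$ and $g-d+r \ge 1$. Then the Brill--Noether number $\rho(g,r,d) = g - (r+1)(g-d+r)$ equals $-1$, so the locus $D = \overline{\mathcal M}^{\,r}_{g,d}$ of curves admitting a $g^r_d$ is a divisor. Its class, as computed by Harris--Mumford and Eisenbud--Harris, is a positive multiple of
\[
(g+3)\lambda - \frac{g+1}{6}\,\delta_0 - \sum_{i=1}^{[g/2]} i(g-i)\,\delta_i .
\]
Since $i(g-i) \ge g-1 > \tfrac{g+1}{6}$ for $1 \le i \le [g/2]$ and $g \ge 2$, the minimal boundary coefficient is $\tfrac{g+1}{6}$, so
\[
s(D) = \frac{g+3}{(g+1)/6} = 6 + \frac{12}{g+1}.
\]
This is smaller than $13/2$ exactly when $\tfrac{12}{g+1} < \tfrac12$, i.e.\ when $g > 23$, which is the asserted range.

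The crux of the argument is the determination of this divisor class, and especially of its boundary coefficients. I would pin down the $\lambda$- and $\delta_0$-coefficients by intersecting $D$ with a general one-parameter test family (a Lefschetz pencil of curves on a surface), where the number of $g^r_d$'s is computed by Porteous' formula. The coefficients of $\delta_i$ for $i \ge 1$ are harder: they are detected by degenerating to the boundary and tracking how the existence of a $g^r_d$ specializes to a nodal curve, which is exactly the content of the theory of limit linear series, or equivalently of the admissible covers used throughout this paper. Executing these boundary degeneration computations is the main obstacle.

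There remains the case in which $g+1$ is prime. Then the only factorizations of $g+1$ force a non-proper linear series (one is driven to a $g^{\,g}_{2g-1}$, which does not exist), so no Brill--Noether divisor of the above type is available and the construction must be replaced. Following Eisenbud--Harris, I would instead use a divisor cut out by a finer condition on linear series --- for example the locus where the Gieseker--Petri map of a $g^r_d$ with $\rho = 0$ fails to be injective --- and show, by the same style of degeneration analysis, that its slope is again below $13/2$ for $g > 23$. Proving that such a substitute divisor is effective, of pure codimension one, and of the required class is the most delicate part of the proof, and once it is in place the argument covers all $g > 23$.
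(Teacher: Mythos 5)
The paper does not prove this proposition; it is quoted as a known theorem and attributed to \cite{HMu}, \cite{H} and \cite{EH}, so there is no internal proof to compare against. Your outline accurately reproduces the strategy of those references --- reduce general type to producing an effective divisor of slope below $\frac{13}{2}$ (modulo the Harris--Mumford analysis of singularities), use the Brill--Noether divisors of class proportional to $(g+3)\lambda-\frac{g+1}{6}\delta_0-\sum i(g-i)\delta_i$ and slope $6+\frac{12}{g+1}<\frac{13}{2}$ for $g>23$ when $g+1$ is composite, and fall back on Gieseker--Petri-type divisors when $g+1$ is prime --- but it is an outline rather than a proof, since the genuinely hard content (the limit-linear-series computation of the boundary coefficients and the effectiveness and class of the substitute divisor) is deferred exactly where you say it is.
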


More recently, Farkas, Popa and Khosla found a series of effective divisors, despite that most of them are virtual, with slopes even lower than the Brill-Noether bound $6+\frac{12}{g+1}$, cf. \cite{FP}, \cite{Kh}, 
\cite{F1}, \cite{F2} and \cite{F3}. In particular, Farkas also announces that \Mg is of general type for $g = 22, 23$. 

Surprisingly, all the known effective divisors on \Mg have slope bigger than 6, which makes one guess that there would be a uniform lower bound, independent of $g$, 
for slopes of effective divisors. Let $s_{g}$ denote the \emph{lower bound} for slopes of all effective divisors on $\overline{\mathcal M}_{g}$. 
In \cite{HM1}, \cite{CR}, and \cite{T}, Harris and Morrison, Chang and Ran, as well as Tan among others studied $s_{g}$ for small genera. One of their main results can be summarized as follows.  

\begin{proposition}
$s_{g}$ is equal to the Brill-Noether bound $6 + \frac{12}{g+1}$ for $2 \leq g \leq 9$ and $g = 11$. 
\end{proposition}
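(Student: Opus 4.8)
The plan is to prove the two matching bounds $s_g \le 6+\frac{12}{g+1}$ and $s_g \ge 6+\frac{12}{g+1}$ separately; the first exhibits an effective divisor realizing the bound, the second rules out anything smaller. I would treat the lower bound as the main object, since it is precisely where families of covers of the type studied in this paper enter, and obtain the upper bound from the effective divisors already available in these genera.

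\textbf{Lower bound via sweeping covers.} I would produce a one-parameter family $B \subset \overline{\mathcal M}_g$ that \emph{sweeps out} the moduli space — that is, $B$ moves in an algebraic family whose members cover a Zariski-dense subset — with numerical invariants satisfying $\delta_i\cdot B \ge 0$ for every $i$ together with
\[
\frac{\delta\cdot B}{\lambda\cdot B} \;=\; 6+\frac{12}{g+1}.
\]
Such a $B$ arises by realizing the general genus $g$ curve as a cover of a rational (or, in the spirit of this paper, an elliptic) base of minimal degree $k=\lceil (g+2)/2\rceil$, so that the Brill--Noether number $\rho(g,1,k)\ge 0$ and the corresponding Hurwitz space dominates $\overline{\mathcal M}_g$, and then letting a single branch point vary while the remaining data stay generic. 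Granting this, for any effective divisor $D = a\lambda - \sum_i b_i\delta_i$ the curve $B$ is not contained in the support of $D$, so $D\cdot B \ge 0$; together with $\delta_i\cdot B\ge 0$ and $b_i \ge \min_j b_j$ this gives
\[
a\,(\lambda\cdot B)\;\ge\;\sum_i b_i\,(\delta_i\cdot B)\;\ge\;\min_j b_j\,(\delta\cdot B),
\]
whence $s(D)=a/\min_j b_j \ge (\delta\cdot B)/(\lambda\cdot B)=6+\frac{12}{g+1}$. As this holds for every effective $D$, we obtain $s_g \ge 6+\frac{12}{g+1}$.

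\textbf{Upper bound.} Whenever $g+1$ is composite — which covers $g=3,5,7,8,9,11$ — I would factor $g+1=(r+1)(g-d+r)$ to find integers $r,d$ with $\rho(g,r,d)=-1$, so that the Brill--Noether locus $\overline{\mathcal M}^r_{g,d}$ of curves admitting an exceptional $g^r_d$ is an honest divisor. Its class, due to Eisenbud and Harris, is a positive multiple of $(g+3)\lambda - \frac{g+1}{6}\delta_0 - \sum_i i(g-i)\delta_i$, and since $\frac{g+1}{6}\le i(g-i)$ throughout the relevant range its slope is exactly $\frac{6(g+3)}{g+1}=6+\frac{12}{g+1}$; this yields $s_g\le 6+\frac{12}{g+1}$ and, with the lower bound, equality. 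For the remaining genera $g=2,4,6$, where $g+1$ is prime and no classical Brill--Noether divisor exists, I would instead invoke the explicit effective divisors computed in the cited works of Chang--Ran and Tan, built from the detailed projective geometry of canonical curves in these low genera, which realize the same slope and close the argument.

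\textbf{Main obstacle.} The crux is the lower-bound construction, and within it the boundary analysis. Two points need care: first, guaranteeing that $B$ genuinely sweeps out \Mg rather than a proper subvariety, which forces the minimal-degree choice $k=\lceil(g+2)/2\rceil$ and hence the restriction on $g$; second, computing $\lambda\cdot B$ and each $\delta_i\cdot B$ by counting the nodal degenerations that occur as branch points collide, which is exactly the setting where admissible covers and limit linear series must be brought in and where the precise value $6+\frac{12}{g+1}$ is pinned down. I would finally remark that $g=10$ is deliberately absent: there the minimal-degree sweeping family specializes onto a $K3$ surface and the associated bound drops strictly below $6+\frac{12}{g+1}$, consistent with the known effective divisor of slope $7$ in that genus, so the present method cannot — and should not — establish the Brill--Noether value for $g=10$.
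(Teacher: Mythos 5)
The paper offers no proof of this proposition: it is quoted as a known result of Harris--Morrison, Chang--Ran and Tan (the references [HM1], [CR], [T]), so your argument can only be judged on its own terms. Judged that way, the lower-bound half has a genuine gap exactly where you write ``Granting this.'' You assert that the sweeping family $B$ of minimal-degree ($k=\lceil (g+2)/2\rceil$) simply branched covers of $\mathbb P^{1}$, with one branch point moving, satisfies $(\delta\cdot B)/(\lambda\cdot B)=6+\frac{12}{g+1}$. No computation is offered, and the claim is false in general: these are precisely the families whose slopes Harris and Morrison could not put into closed form --- the introduction of this very paper recalls that the resulting lower bounds are \emph{not} sharp and appear to behave like $O(\frac{1}{g})$, far below the Brill--Noether bound. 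Even in the range $2\le g\le 9$, $g=11$, the cited verifications do not proceed by exhibiting one sweeping family of slope $6+\frac{12}{g+1}$; they are case-by-case arguments (for $g=2$ the relation $\lambda=\frac{1}{10}\delta_{0}+\frac{1}{5}\delta_{1}$ in $\mathrm{Pic}(\overline{\mathcal M}_{2})$ already gives $s_{2}=10$; for the other genera one uses carefully chosen pencils and ad hoc divisor-class computations). The formal deduction $s(D)\ge (\delta\cdot B)/(\lambda\cdot B)$ from $D\cdot B\ge 0$ and $\delta_{i}\cdot B\ge 0$ is fine, but the number on the right-hand side is the whole theorem, and you have not computed it.

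The upper-bound half is also incomplete for $g=2,4,6$, where $g+1$ is prime and no Brill--Noether divisor exists. For $g=2$ one can fall back on $10\lambda-\delta_{0}-\delta_{1}=\delta_{1}$, which is effective of slope $10=6+\frac{12}{3}$; but for $g=4$ and $g=6$ the existence of an effective divisor of slope exactly $6+\frac{12}{g+1}$ is itself a nontrivial assertion (the natural candidates, Gieseker--Petri-type divisors, have strictly larger slope), so ``invoke Chang--Ran and Tan'' is a pointer to the statement rather than a proof of it. Finally, your closing remark about $g=10$ has the mechanism backwards: the failure there is that Farkas and Popa produced an \emph{effective divisor} (the locus of curves lying on a $K3$ surface) of slope $7<6+\frac{12}{11}$, so it is the lower bound that breaks at $g=10$; it is not that the sweeping family ``specializes onto a $K3$'' --- a general genus $10$ curve does not lie on a $K3$ at all, which is exactly why that locus is a divisor.
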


The case $g = 10$ is an exception, since $s_{10} = 7$ is strictly less than the Brill-Noether bound. A unique effective divisor with slope 7 has been found 
by Farkas and Popa, cf. \cite{FP}.  

For large $g$, the exact value of $s_{g}$ is completely unknown. Still in \cite{HM1}, Harris and Morrison obtained an asymptotic lower bound for $s_{g}$. 
Unfortunately, the bound is not sharp and seems to approach 0 when $g$ goes to infinity. Nevertheless, their idea is simple and can be generalized in various
directions. 

In order to bound the slope of an effective divisor, we would like to construct moving curves on \Mg and compute their slopes. 

\begin{definition}
A \emph{moving curve} $B$ is an irreducible one-dimensional family of stable genus $g$ curves whose deformations cover an open set of $\overline{\mathcal M}_{g}$. Its \emph{slope} is given by
$s(B) = \frac{B\ldotp \delta}{B\ldotp \lambda}.$
\end{definition}

Sometimes people also use $\frac{\kappa}{\lambda}$ as the slope of the fibration over $B$. Due to the relation
$\lambda = \frac{1}{12}(\kappa + \delta), $ the two definitions determine each other. Our definition above is more consistent with 
the slope of a divisor. 

Since an effective divisor $D$ cannot contain all the deformations of $B$, their intersection $D\ldotp B > 0$, which immediately implies $s(D) > s(B)$.  

Harris and Morrison considered simply branched genus $g$ covers of $\mathbb P^{1}$ with large degree. When the branch points vary along a one-dimensional base, those covering curves form a moving curve $Z$ in $\overline{\mathcal M}_{g}.$ The calculation for the slope $s(Z)$ boils down to a rather complicated enumerative problem, for which they could not even provide a closed formula! However, an experimental study based on computer check seems to yield $s(Z) \sim O(\frac{1}{g})$. 

One way to generalize this idea is to consider one-parameter families of curves in higher dimensional projective spaces. In \cite{CHS}, Coskun, Harris and Starr 
studied canonical curves and obtained sharp bound $s_{g}$ for genus up to 6. The obstruction along this direction is that the geometry of canonical curves is hard to describe in general
when the genus is large. 
 
In \cite{F}, Fedorchuk considered special one-parameter families of curves in the Severi variety and got a recursive formula for their slopes. 
The formula provides some new lower bounds for $s_{g}$ when $g \leq 20$, but the asymptotic estimate of these bounds is unclear. 

Instead of covers to $\mathbb P^{1}$, another try is to consider genus $g$ covers of an elliptic curve that are branched only at one point with a fixed ramification type. When the moduli of the elliptic curve varies, those covering curves form a one-dimensional family $Y$. The geometry of $Y$ has been studied intensively, cf. \cite{C}. In particular, we come up with
a slope formula of $Y$. Unfortunately the difficulty is similar as we encountered in \cite{HM1}. The slope formula involves some enumerative numbers about covers of elliptic curves, which we have not been able to analyze for large $g$ to obtain a numerical bound. 

Recently, Pandharipande kindly explained to the author a different approach, by computing certain intersection numbers of cotangent line bundles with $\lambda$ and $\delta$ on the moduli space of curves with marked points. A lower bound $s_{g} \geq \frac{60}{g+4}$ has been established \cite{P}. Note that this bound tends to 0 as well for large $g$. 

We also notice that the slope of a one-parameter family of stable genus $g$ curves is interesting for its own sake. It usually implies some geometric properties this family possesses. 
For instance, Cornalba and Harris verified the following result, cf. \cite{CH}. 

\begin{proposition}
Any one-parameter family of stable genus $g$ curves, not all singular, must have slope less than or equal to $8 + \frac{4}{g}$. The equality holds only if the family is a hyperelliptic fiberation. 
\end{proposition}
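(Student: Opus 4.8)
The plan is to translate the slope bound into the classical \emph{slope inequality} for fibered surfaces and then establish the latter by a stability argument in the style of Cornalba and Harris. First I would reduce to the case of a relatively minimal semistable fibration $f\colon X\to B$ over a smooth complete curve $B$ whose generic fiber is smooth of genus $g$; after a finite base change (which scales $\lambda$, $\delta$ and $\kappa$ by the degree and hence leaves the slope unchanged) together with contraction of $(-1)$-curves in the fibers, such a model exists. Writing $\lambda=\deg f_*\omega_{X/B}$, $\delta$ for the number of nodes in the fibers, and $\kappa=\omega_{X/B}^2$, the relation $\lambda=\frac1{12}(\kappa+\delta)$ recalled above gives $\kappa=12\lambda-\delta$, so that
\[
s(B)=\frac{B\cdot\delta}{B\cdot\lambda}=\frac{\delta}{\lambda}=12-\frac{\kappa}{\lambda}.
\]
Hence the desired bound $s(B)\le 8+\frac4g$ is \emph{equivalent} to the inequality
\[
\kappa\ \ge\ \frac{4(g-1)}{g}\,\lambda,
\]
and the equality case of the Proposition corresponds exactly to the equality case here.

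The heart of the argument is to prove this slope inequality, and here I would use the relative canonical map together with the geometric invariant theory of canonical curves. Consider the Hodge bundle $E=f_*\omega_{X/B}$, a rank $g$ vector bundle on $B$ with $\deg E=\lambda$. Over the non-hyperelliptic locus the fibers are embedded by $\omega_{X/B}$ as canonical curves of degree $2g-2$ in $\mathbb P(E^\vee)\cong\mathbb P^{g-1}$. The key input is that a smooth non-hyperelliptic canonical curve is Chow-stable; applying the Hilbert--Mumford numerical criterion in families---this is precisely the Cornalba--Harris inequality for the canonical polarization---converts the semistability of the generic fiber into a numerical relation between $\deg E=\lambda$ and the self-intersection $\kappa$ of the relative dualizing sheaf. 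A careful bookkeeping of the destabilizing weights should show that the resulting inequality is exactly $\kappa\ge \frac{4(g-1)}{g}\lambda$.

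For the equality and hyperelliptic statement I would argue as follows. A hyperelliptic canonical curve maps $2:1$ onto a rational normal curve in $\mathbb P^{g-1}$, and such an image is only \emph{strictly} semistable rather than stable; this is exactly the borderline of the weight computation, so a hyperelliptic fibration saturates the slope inequality. Conversely, if equality holds then the generic fiber cannot be Chow-stable, which forces it to be hyperelliptic. To close the loop I would verify directly---by an explicit computation on a hyperelliptic family, for instance using the conic-bundle description of the hyperelliptic locus---that its slope is precisely $8+\frac4g$, matching the bound.

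The main obstacle I anticipate is the stability step: proving Chow-stability of the generic canonical curve and, more delicately, carrying out the weight computation carefully enough to extract the sharp constant $\frac{4(g-1)}{g}$ and to pin down the equality case. An alternative that sidesteps GIT is Xiao's method: take the Harder--Narasimhan filtration of $E$, bound the degrees of the associated subpencils of $\omega_{X/B}$ using that $\omega_{X/B}$ is nef together with Clifford's theorem, and sum the contributions. This route trades the stability analysis for a careful intersection-theoretic estimate, and it too isolates the hyperelliptic fibrations as the equality case.
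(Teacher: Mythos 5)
This proposition is not proved in the paper at all: it is quoted verbatim from Cornalba--Harris \cite{CH} as background motivation, so there is no internal proof to compare against. Your opening reduction is correct and clean: using $\lambda=\frac1{12}(\kappa+\delta)$, the bound $s\le 8+\frac4g$ is indeed equivalent to the slope inequality $\kappa\ge\frac{4(g-1)}{g}\lambda$, and your two proposed routes (the Cornalba--Harris GIT method and Xiao's Harder--Narasimhan method) are exactly the two classical proofs of that inequality. In that sense you have correctly identified the source and the strategy of the cited result.

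As a proof, however, the proposal still has two substantive gaps. First, the entire mathematical content is concentrated in the step you defer: the Chow (or Hilbert) semistability of canonically embedded curves and the weight bookkeeping that extracts the constant $\frac{4(g-1)}{g}$. Asserting that ``a careful bookkeeping should show'' the sharp constant is not a proof; this is precisely what occupies the bulk of \cite{CH}. Second, and more seriously for the logic of the equality case: the canonical-embedding argument only applies when the generic fiber is non-hyperelliptic, since $\omega_{X/B}$ is not relatively very ample on a hyperelliptic family --- the fiberwise canonical map is $2:1$ onto a rational normal curve. So a family with hyperelliptic generic fiber is simply not covered by your main argument, and your deduction ``if equality holds then the generic fiber is not Chow-stable, hence hyperelliptic'' is circular unless you separately establish the inequality (with the same constant) for hyperelliptic families; Cornalba--Harris do this by a separate semistability analysis of the degree-$(2g-2)$ cycle supported on the rational normal curve (equivalently, of the branch divisor as a binary form of degree $2g+2$). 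Finally, a small logical point: the proposition only asserts that equality \emph{forces} the family to be hyperelliptic, not the converse; your plan to ``verify directly that a hyperelliptic family has slope precisely $8+\frac4g$'' is both unnecessary and false in general, since a hyperelliptic fibration can have strictly smaller slope.
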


Stankova further studied families of trigonal curves \cite{S} and obtained a bound for their slopes. Her results can be summarized as follows.  

\begin{proposition}
For a trigonal family $X$ over a one-dimensional base $B$, its slope is less than or equal to $\frac{36(g+1)}{5g +1}$, with equality occurring if all fibers are irreducible, $X$ is a triple cover of a ruled surface over $B$, and a certain divisor class on $X$ is numerically zero.
\end{proposition}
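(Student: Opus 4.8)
The plan is to exploit the extra structure that a trigonal fibration carries, namely the degree $3$ map to a ruled surface, and to convert that structure into inequalities among the basic numerical invariants $\lambda$, $\delta$ and $\kappa = \omega_{X/B}^{2}$ of the fibration $f\colon X \to B$. Recall that these are linked by the relation $12\lambda = \kappa + \delta$ quoted above, so that bounding the slope $s(X) = \delta/\lambda$ is equivalent to bounding the ratio $\kappa/\lambda$. Concretely, I would first realize $X$ as a triple cover $\pi\colon X \to Y$ of a ruled surface $\rho\colon Y \to B$, where the fibers of $\rho$ are the copies of $\mathbb{P}^{1}$ over which the trigonal curves map. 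By the structure theory of triple covers, such a cover is encoded by a rank $2$ bundle $E$ on $Y$ (the Tschirnhausen bundle) together with a section of $\mathrm{Sym}^{3}E \otimes (\det E)^{-1}$ whose vanishing locus is the branch divisor $R \subset Y$. Embedding $X$ as a divisor in the projective bundle $\mathbb{P}(E) \to Y$ then makes all of its numerical invariants computable.

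Second, I would compute $\chi(\mathcal{O}_{X})$, $K_{X}^{2}$, and hence $\lambda$, $\delta$, $\kappa$ as explicit expressions in the Chern classes $c_{1}(E)$, $c_{2}(E)$ and the intersection numbers of the branch and relative classes against the fiber class $F$ and a fixed section $\sigma$ of $\rho$ on $Y$. Since $\rho$ is a ruled surface over a curve, its Picard group and intersection form are elementary, generated by $F$ and $\sigma$ with $F^{2} = 0$, $F\cdot\sigma = 1$, $\sigma^{2} = -e$, so each invariant reduces to a small collection of integer parameters. Substituting into $12\lambda = \kappa + \delta$ expresses $\delta$ and $\lambda$ as functions of these parameters and of $g$.

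Third, and this is where the geometry enters, I would extract the constraints that the parameters must satisfy. These come from positivity: the relative trigonal system must be base-point free, the Tschirnhausen bundle and the branch divisor must be effective and positive enough to sweep out a family of irreducible curves, and the relative dualizing sheaf must be nef because the fibers are stable. Equivalently, certain divisor classes on $X$ and on $Y$, most importantly the class controlling the Maroni invariant (the splitting type of $E$ on a general fiber), must be nef or effective. Translating these conditions into linear and quadratic inequalities among the parameters, and then optimizing $\delta/\lambda$ subject to them, should produce the stated bound $\frac{36(g+1)}{5g+1}$.

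The hard part will be twofold. First, pinning down which positivity inequality is the binding one: the asymptotic coefficient $\tfrac{36}{5}$ is sensitive to how one bounds the Maroni invariant and the contribution of reducible or non-reduced fibers, so obtaining a sharp extremal statement requires the strongest available form of each inequality, for instance a Xiao-type slope inequality adapted to the scroll. Second, the equality analysis: to show that the bound is attained only under the three stated conditions (all fibers irreducible, $X$ a genuine triple cover of a ruled surface, and a specified class numerically trivial), I would trace back through the chain of inequalities, identify precisely which must become equalities, and interpret each geometrically. I expect the numerically trivial class to be the one measuring the deviation of $E$ from the balanced splitting type together with the vertical components of the branch divisor, so that its vanishing simultaneously forces the scroll structure and the irreducibility of the fibers.
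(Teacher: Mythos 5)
First, a point of calibration: the paper you were asked to compare against does not prove this proposition at all. It appears in the introduction as a quoted summary of Stankova's results on trigonal fibrations \cite{S}, so there is no in-paper argument to measure your proposal against; the only meaningful comparison is with the cited source. Your outline is pointed in the right direction relative to that source --- the proof there does pass through the triple-cover structure theory (the rank two Tschirnhausen bundle $E$ on the ruled surface, the branch divisor cut out by a section of $\mathrm{Sym}^{3}E\otimes(\det E)^{-1}$, the embedding of $X$ into $\mathbb{P}(E)$), the computation of $\lambda$, $\kappa$, $\delta$ in terms of the Chern data via $12\lambda=\kappa+\delta$, and an optimization against positivity constraints tied to the Maroni invariant.

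That said, what you have written is a plan, not a proof, and the gap is precisely at the steps that produce the theorem. You never derive the explicit expressions for $\lambda$ and $\delta$, never identify which positivity inequality is binding, and never carry out the optimization; you only assert that doing so ``should produce'' the bound $\frac{36(g+1)}{5g+1}$. The equality analysis, which is the substantive second half of the statement and in particular the identification of the ``certain divisor class'' that must be numerically trivial, is left as a conjecture (``I expect the numerically trivial class to be\ldots''). There is also a structural issue you gloss over: a trigonal family is not automatically a triple cover of a ruled surface --- the relative $g^{1}_{3}$ need not globalize to a morphism to a $\mathbb{P}^{1}$-bundle without modification, and singular or reducible fibers obstruct this; that is exactly why ``$X$ is a triple cover of a ruled surface over $B$'' appears among the \emph{equality} conditions rather than as a standing hypothesis. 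A complete argument must first reduce the general case to the covered case (for instance by passing to a birational model and controlling the resulting discrepancy in the invariants) before any Chern-class computation applies. As it stands, the proposal identifies the right machinery but establishes neither the inequality nor the characterization of when it is attained.
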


Inspired by all the above, in this paper we continue to study the geometry of certain one-parameter families of covers of elliptic curves. 

Fix an elliptic curve $E$ and $2g-2$ points on it.
Consider all genus $g$ covers of $E$ simply branched at these points. When the branch points move along a one-dimensional base, those covers vary and form a one-parameter family 
$W_{d,g}$. Sometimes we will skip the indices $d,g$ if there is no confusion. In the first place it may not be clear what happens when two branch points meet. One might guess that a degenerate singular cover would arise in this process. Luckily there is 
an explicit way to fill in a special cover using so-called \emph{admissible covers}, which was defined in \cite{HMu} by Harris and Mumford. An excellent introduction to admissible covers can also be found in the book \cite[3G]{HM2}. 

Let us formally set up the question. Take the product $S = E\times X$ over the base $X$ a smooth curve, along with $2g-2$ sections $\Gamma_{1}, \ldots, \Gamma_{2g-2}$. We require that no three sections can meet at a common point, and that if two sections meet, they must intersect transversely. Moreover, a fiber $E$ can possess at most one such intersection point. Namely, 
if $\sum_{i < j} \Gamma_{i}\ldotp \Gamma_{j} = k$, then there are $k$ distinct points $b_{1}, \ldots, b_{k}$ on $X$ such that the fiber over each $b_{i}$ has an intersection point of two 
sections. Denote the complement of $\{b_{1}, \ldots, b_{k}\}$ in $X$ by $X^{0}$.  $X$ admits a map to 
$\overline{\mathcal M}_{1, 2g-2}$. Its image has constant $j$-invariant of $E$. Let $\overline{\mathcal H}_{d, g}$ denote the Hurwitz space parameterizing degree $d$ genus $g$ admissible covers
of elliptic curves simply branched at the $2g-2$ marked points, cf. \cite[3G]{HM2}. Then we define the one-parameter family $W$ by the following fiber product: 

$$\xymatrix{
W \ar[r] \ar[d]  &  \overline{\mathcal H}_{d, g}\ar[d] \ar[r] & \overline{\mathcal M}_{g}\\
X \ar[r]     & \overline{\mathcal M}_{1, 2g-2}}$$ 

Note that $\overline{\mathcal H}_{d, g}$ admits a natural map to $\overline{\mathcal M}_{g}$, via which $W$ also maps to $\overline{\mathcal M}_{g}$. We want to study the geometry of $W$, including 
its slope, genus and irreducible components. 

One of our main tasks is to figure out the slope $s(W) = \frac{W\ldotp \delta}{W\ldotp\lambda}$. 
The family $W$ can be viewed as the opposite of the family $Y$ considered in \cite{C}. Generally speaking, once we fix the degree and genus of a cover as well as the ramification type over $n$ branch points on an elliptic curve, there is always a Hurwitz space of admissible covers over $\overline{\mathcal M}_{1, n}$. Any effective curve class in $\overline{\mathcal M}_{1, n}$ pulls back to a one-parameter family of covers in the Hurwitz space that maps to \Mg. Ideally we should compute the slopes of all such families. But in practice the calculation could be quite messy. Hence, we only choose some one-parameter families to study. In \cite{C}, we considered the most degenerate case when there is only one branch point, that is, the $n$ branch points approach all together. Here the family $W$, instead, is the most general case, since the covers are simply branched at $2g-2$ distinct points and any other families can be viewed as its special cases when some of the branch points tend to each other. 

The strategy to compute the slope of $W$ is similar to the methods used in \cite{HM1} and \cite{C}. We first associate to each cover some combinatorial data by using the symmetric group
$S_{d}$, and then study the degenerate admissible covers based on the combinatorial data. 

Take a standard torus $E$ with $2g-2$ distinct marked points $p_{1}, \ldots , p_{2g-2}$. Pick a base point $b$ on $E$ and a group of generators $\alpha, \beta, \gamma_{1}, \ldots, 
\gamma_{2g-2}$ of $\pi_{1}(E_{b}, p_{1}, \ldots, p_{2g-2})$, where $\alpha, \beta$ are a standard basis for $\pi_{1}(E_{b})$ and $\gamma_{i}$ is a closed path going around $p_{i}$ once. 
After choosing suitable directions for those paths, we have the relation  
$ \beta^{-1}\alpha^{-1}\beta\alpha \sim \gamma_{1} \cdots \gamma_{2g-2}, $
which can be seen from Figure \ref{E}. 
\begin{figure}[H]
    \centering
    \psfrag{E}{$E$}
    \psfrag{p1}{$p_{1}$}
    \psfrag{pi}{$p_{i}$}
    \psfrag{pg}{$p_{2g-2}$}
    \psfrag{b}{$b$}
    \psfrag{r1}{$\gamma_{1}$}
    \psfrag{ri}{$\gamma_{i}$}
    \psfrag{rg}{$\gamma_{2g-2}$}
    \psfrag{a}{$\alpha$}
    \psfrag{be}{$\beta$}
    \includegraphics[scale=0.3]{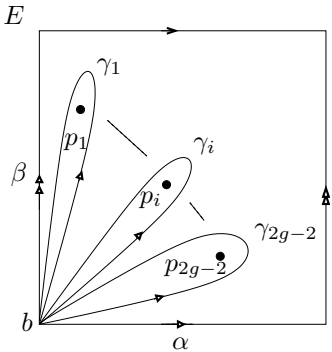}
    \caption{\label{E} $E$ with marked points and paths}
\end{figure}
A degree $d$ genus $g$ cover of $E$ simply branched at $p_{1}, \ldots, p_{2g-2}$ corresponds to a homomorphism $\pi_{1}(E_{b}, p_{1}, \ldots, p_{2g-2}) \rightarrow S_{d}$, 
such that the images of all $\gamma_{i}$'s are simple transpositions. We abuse the notation and still use $\alpha, \beta, \gamma_{1}, \ldots, \gamma_{2g-2}$ to denote their monodromy 
images in $S_{d}$. Define a set $Cov$ as follows. 

\begin{definition}
\label{Cov} Let the set $Cov$ be
$$Cov:= \{ (\alpha, \beta, \gamma_{1}, \ldots, \gamma_{2g-2})\in S_{d} \times \cdots \times S_{d} \ |\ \beta^{-1}\alpha^{-1}\beta\alpha = \gamma_{1}\cdots\gamma_{2g-2}, $$
$$\gamma_{i}\mbox{'s are simple transpositions, and}\ \langle\alpha, \beta, \gamma_{1}, \ldots, \gamma_{2g-2}\rangle\ \mbox{is transitive} \} .$$ 
There is an \emph{equivalence} relation $\sim$ among elements in $Cov$: 
$$ (\alpha, \beta, \gamma_{1}, \ldots, \gamma_{2g-2}) \sim (\alpha', \beta', \gamma'_{1}, \ldots, \gamma'_{2g-2} ) $$ if and only if there exists $\tau\in S_{d}$
such that $\tau (\alpha, \beta, \gamma_{1}, \ldots, \gamma_{2g-2}) \tau^{-1} = (\alpha', \beta', \gamma'_{1}, \ldots, \gamma'_{2g-2}). $
Moreover, let $N = |Cov/\sim|$ denote the cardinality of the set $Cov$ modulo the equivalence relation.  
\end{definition} 

Here the notation $\langle \cdot \rangle$ stands for the subgroup of $S_{d}$ generated by the elements inside. 
The transitivity condition is to make sure that the covers are connected. The equivalence relation amounts to relabeling the $d$ sheets of the cover. 
Two elements in $Cov$ correspond to isomorphic covers if and only if they are equivalent. The isomorphism between two covers 
$C\rightarrow E$ and $C'\rightarrow E$ means that there exists a commutative diagram as follows, 
$$\xymatrix{
C \ar[rr]^\phi \ar[dr] & & C' \ar[dl] \\
& E  &  }$$
where $\phi$ is an isomorphism between $C$ and $C'$. 

Then the number $N$ defined above counts the total number of degree $d$ genus $g$ connected covers of $E$ simply branched at $p_{1}, \ldots, p_{2g-2}$ up to isomorphism. Therefore, $W \rightarrow X$ is a finite map of degree $N$ unramified over the open subset $X^{0}$. It is possibly branched at $b_{i}$. 
To describe the local picture around $b_{i}$, it is necessary to understand which admissible covers appear when two branch points meet on the fiber over $b_{i}$. We can reorder the $2g-2$ sections and suppose locally
two sections $\Gamma_{1}$ and $\Gamma_{2}$ meet. The type of the corresponding admissible covers depend on the monodromy data $\gamma_{1}$ and $\gamma_{2}$. 
Since $\gamma_{1}$ and $\gamma_{2}$ are simple transpositions both permuting two letters of $\{1, \ldots, d\}$, their length-2 cycles can be the same, or contain only one common letter, 
or consist of four different letters. Denote the three cases by $\gamma_{1} = \gamma_{2},\  |\gamma_{1} \cap \gamma_{2}| = 1$ and $\gamma_{1} \cap \gamma_{2} = \emptyset$ 
respectively. We further define some subsets of $Cov$ based on the relation between $\gamma_{1}$ and $\gamma_{2}$ as follows. 

\begin{definition} For $1\leq h \leq [\frac{d}{2}]$, introduce following subsets of $Cov$. 
\label{subsets}
$$Cov_{0}:=\{ (\alpha, \beta, \gamma_{1}, \ldots, \gamma_{2g-2}) \in Cov\ |\ \gamma_{1} = \gamma_{2}, $$
$$\langle\alpha, \beta, \gamma_{3}, \ldots, \gamma_{2g-2} \rangle \ \mbox{acts transitively on}\  \{1, \ldots, d\} \};$$
$$Cov^{(h)}_{1}:= \{ (\alpha, \beta, \gamma_{1}, \ldots, \gamma_{2g-2}) \in Cov\ |\ \gamma_{1} = \gamma_{2}, $$
$$\langle\alpha, \beta, \gamma_{3}, \ldots, \gamma_{2g-2} \rangle\  \mbox{acts transitively on a partition} \ (h\ |\ d-h) \};$$ 
$$Cov_{2}:= \{ (\alpha, \beta, \gamma_{1}, \ldots, \gamma_{2g-2}) \in Cov\ |\  \gamma_{1} \cap \gamma_{2} = \emptyset \}; $$
$$Cov_{3}:= \{ (\alpha, \beta, \gamma_{1}, \ldots, \gamma_{2g-2}) \in Cov\ |\   |\gamma_{1} \cap \gamma_{2}| = 1 \}. $$

Moreover, let $$N_{0} = |Cov_{0} /\sim|, \ N^{(h)}_{1} = |Cov^{(h)}_{1}/\sim|, $$ 
$$N_{2} = |Cov_{2}/\sim|, \ N_{3} = |Cov_{3}/\sim|$$ 
denote the cardinalities of these subsets modulo the equivalence relation. 
\end{definition}   

The notation $(h\ |\ d-h)$ represents a partition of $\{1, \ldots, d\}$ into a subset of cardinality $h$ and its complement of cardinality $d-h$. 
If $ \langle\alpha, \beta, \gamma_{1}, \ldots, \gamma_{2g-2}\rangle$ acts transitively on $\{1, \ldots, d\}$ and $\gamma_{1} = \gamma_{2}$ 
are the same simple transposition, after taking away $\gamma_{1}$ and $\gamma_{2}$, the orbit of the permutations left over can at most break into
two orbits corresponding to a partition of type $(h\ |\ d-h)$. So the subsets $Cov_{0};\ Cov_{1}^{(h)}, 1\leq h \leq [\frac{d}{2}]$ all together fully cover the case $\gamma_{1} = \gamma_{2}$.  

Note that the equivalence relation is also well-defined for the above subsets. 
Furthermore, let $N_{1} = \sum_{h=1}^{[d/2]}N^{(h)}_{1}$. The subsets 
$Cov^{0}; \ Cov^{(h)}_{1}, 1\leq h \leq [\frac{d}{2}]; \ Cov_{2}; \ Cov_{3}$ 
yield a decomposition of $Cov$. Hence, we have the equality $ N = N_{0} + N_{1} + N_{2} + N_{3}.$ 

After the long and winding preparation, finally we can state one of our main results, a slope formula for $W$. 

\begin{theorem}
\label{slope}
The slope of $W$ is given by 
$$s(W) = \frac{72(N_{0}+N_{1})}{9(N_{0}+N_{1}) + N_{3}}. $$
\end{theorem}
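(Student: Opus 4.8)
The plan is to realize the whole family of covers as a single surface and reduce the computation of $W\cdot\lambda$ and $W\cdot\delta$ to a sum of purely local contributions sitting over the collision points $b_1,\dots,b_k$. Concretely, after base change to $W$ I would form the trivial elliptic surface $\Sigma=E\times W$ together with its degree $d$ branched cover $f\colon\widetilde{\mathcal C}\to\Sigma$, where $\widetilde{\mathcal C}\to W$ is the genus $g$ fibration whose fibres are the covers $C\to E$ (smooth over $X^{0}$, admissible over the $b_i$). Since $\omega_E$ is trivial, $\omega_\Sigma=\pi_W^{*}\omega_W$, so Riemann--Hurwitz on the surface gives $\omega_{\widetilde{\mathcal C}/W}=\mathcal O(R)$ with $R$ the ramification divisor of $f$. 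Hence $\kappa:=\omega_{\widetilde{\mathcal C}/W}^{2}=R^{2}$, and I would recover $\lambda$ from the relation $\lambda=\tfrac1{12}(\kappa+\delta)$ quoted above. The slope then reads $s(W)=W\cdot\delta/(W\cdot\lambda)=12\,\delta/(\kappa+\delta)$, so everything comes down to the two numbers $\kappa$ and $\delta$.

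Next I would argue that both numbers localize at the collisions. The branch sections $B_i\subset\Sigma$ are graphs of maps $W\to E$, so their normal bundles are pullbacks of the trivial bundle $T_E$; thus $B_i^{2}=0$, while $B_i\cdot B_j$ equals the number of collision points of the two sections. A similar argument shows each ramification curve has $R_i^{2}=0$ away from singular points, so $R^{2}$ receives contributions only where ramification curves meet, that is, over the $b_i$; likewise the fibres of $\widetilde{\mathcal C}\to W$ are smooth away from the $b_i$, so $\delta$ too is supported there. In particular there is no ``generic'' contribution proportional to the full count $N$, which is consistent with the absence of a bare $N$ term in the claimed formula. It remains to read off, cover by cover and collision by collision, the local contributions to $\kappa$ and to $\delta$.

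The core of the argument is the case analysis dictated by Definition \ref{subsets}, carried out by passing to the admissible limit in which a $\mathbb P^{1}$ bubbles off $E$ carrying the two colliding branch points, the cover over the bubble being governed by $\gamma_1,\gamma_2$ and the node monodromy $(\gamma_1\gamma_2)^{-1}$. I expect three outcomes. When $\gamma_1\cap\gamma_2=\emptyset$ the two ramification points lie over disjoint sheets, the limiting cover stays smooth, and $R_i\cdot R_j=0$: this contributes $0$ to both $\kappa$ and $\delta$, explaining why $N_2$ is absent. When $|\gamma_1\cap\gamma_2|=1$ the product $\gamma_1\gamma_2$ is a $3$-cycle, so the two simple ramification points coalesce into a single triple point; the limiting curve is still smooth (no node, hence no contribution to $\delta$), but $\widetilde{\mathcal C}$ acquires a surface singularity whose resolution forces a nonzero local contribution to $R^{2}$, so $N_3$ enters only through $\kappa$. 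When $\gamma_1=\gamma_2$ the two ramification curves lie over the same sheets and meet, and the admissible limit produces a genuine node in the stable fibre (a non-separating loop in the $Cov_0$ case, and the splitting of type $(h\,|\,d-h)$ in the $Cov_1^{(h)}$ case), so this case feeds both $\kappa$ and $\delta$. Assembling these, and bookkeeping the ramification of $W\to X$ at each $b_i$ (trivial when $\gamma_1=\gamma_2$, of order $2$ for disjoint transpositions, of order $3$ when $|\gamma_1\cap\gamma_2|=1$), I would obtain $\delta=a(N_0+N_1)$ and $\kappa=\tfrac a2(N_0+N_1)+\tfrac a6 N_3$ for a common constant $a$ that also absorbs the number $k$ of collisions and hence cancels in the ratio, yielding $s(W)=12\delta/(\kappa+\delta)=72(N_0+N_1)/\bigl(9(N_0+N_1)+N_3\bigr)$.

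The hardest step is the local computation for the $Cov_3$ case: one must correctly identify the singularity of $\widetilde{\mathcal C}$ created when two simple ramification points merge into a triple point, resolve it compatibly with the order-$3$ ramified base change $W\to X$, and evaluate the resulting correction to $R^{2}$. This is where the precise coefficient of $N_3$ in $\kappa$, and the verification that $Cov_3$ makes no contribution to $\delta$, are decided; by contrast the node count for $\gamma_1=\gamma_2$ and the vanishing for disjoint transpositions are comparatively routine once the admissible limits are in hand.
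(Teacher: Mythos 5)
Your skeleton coincides with the paper's: both arguments rest on the relation $12\lambda=\kappa+\delta$, Riemann--Hurwitz applied to the covering of surfaces, and a case analysis over the subsets $Cov_{0}$, $Cov_{1}^{(h)}$, $Cov_{2}$, $Cov_{3}$, and your target local contributions ($\delta$ receiving $2$ from each $Cov_{0}$ or $Cov_{1}^{(h)}$ degeneration and nothing else; $\kappa+\delta$ receiving $3$ from each of those and $\tfrac{1}{3}$ from each $Cov_{3}$ degeneration) agree numerically with Propositions \ref{Delta} and \ref{Lambda}. The genuine gap is that these local contributions \emph{are} the content of the theorem, and you assert them rather than derive them: the coefficients $\tfrac{a}{2}$ and $\tfrac{a}{6}$ in your expression for $\kappa$, and the vanishing of the $Cov_{2}$ contribution, are exactly what must be computed, and you explicitly defer ``the hardest step.'' Worse, the framework in which you propose to compute them does not quite exist as stated: there is no finite flat degree-$d$ cover $\widetilde{\mathcal C}\to E\times W$ extending the family over $X^{0}$, because over a collision point the limit is an admissible cover of $E\cup\mathbb P^{1}$, so it lives over the blow-up of $E\times X$ at the $k$ collision points, and even there a universal family exists only after a finite base change, since $\overline{\mathcal H}_{d,g}$ is a coarse moduli space (the $Cov_{2}$ limits carry automorphisms). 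Once you blow up and base-change, the clean localization you rely on disappears: $(\omega_{Bl(E\times X)/X})^{2}=-k$ and $\widetilde{\Gamma}_{i}^{2}=-k_{i}$ are no longer zero, the node count of the admissible fibers contributes a term proportional to $dN$ coming from the $d-2$, $d-4$ or $d-3$ rational tails, and the $N$- and $N_{2}$-dependence cancels only at the very end through $N=N_{0}+N_{1}+N_{2}+N_{3}$. So the absence of $N$ and $N_{2}$ from the final formula is a \emph{conclusion} of the computation, not a structural fact you may invoke in advance to argue that only the ``interesting'' collisions contribute.

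A second, smaller issue: you conflate two models. The identity $\omega_{\widetilde{\mathcal C}/W}=\mathcal O(R)$ and the localization of $R^{2}$ belong to the branched-cover (admissible) model, whereas $\lambda=\tfrac{1}{12}(\kappa+\delta)$ with $\delta$ equal to the boundary pullback requires the stable model; $\kappa$ and $\delta$ for the two models differ (by the contracted rational tails), even though their sum computes the same $\lambda$. The paper keeps these separate, computing $W\cdot\delta$ on the stable model in Proposition \ref{Delta} (via the self-intersection $-2$ of the bubbled rational curves) and $W\cdot\lambda$ on the admissible model in Proposition \ref{Lambda} (via the weighted node count $nk(dN-3N_{2}-\tfrac{8}{3}N_{3})$ together with $\omega^{2}_{\widetilde T/\widetilde W}=nNk(3-d)$). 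Your plan can be completed, but only by carrying out essentially that computation; as written it identifies where the answer must come from without producing it.
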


Note that $s(W)$ only depends on the ratio of $N_{0}+N_{1}$ and $N_{3}$, which is independent of the base $X$ and the sections.  

The difficulty solving these $N_{i}$'s is essentially the same (if not harder!) as in \cite{HM1}. Unfortunately we have not been able to come up with a better method to analyze the 
asymptotic behavior of this slope formula for general $g$ and $d$. Nevertheless, we study the beginning case for $g=2$ and odd $d$ in detail. We have the following result. 

\begin{corollary}
\label{g2}
When $g=2$ and $d$ is odd, the slope $s(W_{d,2})$ is convergent to 5 as $d$ approaches infinity. 
\end{corollary}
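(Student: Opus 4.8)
The plan is to apply Theorem~\ref{slope} and reduce the corollary to a single asymptotic ratio. Writing
$$s(W_{d,2}) = \frac{72(N_0+N_1)}{9(N_0+N_1)+N_3} = \frac{72}{9 + N_3/(N_0+N_1)},$$
I would show that $N_3/(N_0+N_1) \to 27/5$ as $d\to\infty$, which forces $s \to 72/(9+27/5) = 5$. Throughout I will use that the simultaneous $S_d$-conjugation action on a transitive tuple has stabilizer the centralizer of the monodromy group, of order dividing $d$ and trivial outside a vanishing fraction of tuples; hence $N_i \sim |Cov_i|/d!$ to leading order, the stabilizer corrections being of strictly lower order in $d$. (Note that $N_2$ never enters the slope formula, so it need not be estimated.)

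For $N_0$ and $N_1$ the condition $\gamma_1=\gamma_2$ forces $[\beta,\alpha]=1$, so $\alpha,\beta$ commute. A transitive abelian group is regular, so in $Cov_0$ the pair $(\alpha,\beta)$ generates a regular abelian group of order $d$, while in $Cov^{(h)}_1$ it generates a group with two regular orbits of sizes $h$ and $d-h$. The key arithmetic input is that the number of generating pairs of abelian groups of order $m$, each weighted by $1/|\mathrm{Aut}|$, equals the number of index-$m$ sublattices of $\mathbb Z^2$, namely $\sigma(m)=\sum_{e\mid m}e$. Carrying out the bookkeeping — and here the hypothesis that $d$ is odd is used, since it guarantees $h\neq d-h$ throughout the range and so avoids a symmetric middle term — yields the clean identities
$$|Cov^{(h)}_1| = d!\,\sigma(h)\,\sigma(d-h), \qquad |Cov_0| = \tbinom{d}{2}(d-1)!\,\sigma(d).$$
Thus $N_0 \sim \tfrac12(d-1)\sigma(d) = O(d^2\log\log d)$ is negligible, while $N_1 \sim \tfrac12\sum_{h=1}^{d-1}\sigma(h)\sigma(d-h)$. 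The classical convolution identity $\sum_{h=1}^{n-1}\sigma(h)\sigma(n-h) = \tfrac{1}{12}\big(5\sigma_3(n)-(6n-1)\sigma(n)\big)$, where $\sigma_3(n)=\sum_{e\mid n}e^3$, then gives $N_0+N_1 \sim \tfrac{5}{24}\,\sigma_3(d)$.

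For $N_3$ the commutator $[\beta,\alpha]=\gamma_1\gamma_2$ is a $3$-cycle, and each $3$-cycle has exactly three factorizations into two transpositions sharing a point, so $|Cov_3| = 3P_3$, where $P_3$ counts transitive pairs $(\alpha,\beta)$ with $[\beta,\alpha]$ a $3$-cycle. By the Frobenius formula the number of pairs with $[\beta,\alpha]$ a fixed $3$-cycle $\sigma$ is $d!\sum_\lambda \chi_\lambda(\sigma)/\dim\lambda$; summing over all $d(d-1)(d-2)/3$ three-cycles and dividing by $d!$ gives the disconnected groupoid count $\tfrac13\sum_{\lambda\vdash d}\Sigma_3(\lambda)$, where $\Sigma_3(\lambda)=d(d-1)(d-2)\,\chi_\lambda(\sigma)/\dim\lambda = 3\big(\sum_{\square\in\lambda}c(\square)^2 - \tbinom{d}{2}\big)$ and $c(\square)$ is the content. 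Passing from disconnected to connected covers amounts to dividing the generating series by $f(q)=\prod_k(1-q^k)^{-1}$, so that
$$\sum_d \frac{P_3}{d!}\,q^d = \frac{1}{3\,f(q)}\sum_{\lambda}\Sigma_3(\lambda)\,q^{|\lambda|}.$$
This series is a quasimodular form of weight $4$; expanding it in the basis $\{E_4, E_2^2, E_2, 1\}$ and reading off the two weight-$4$ generators (whose coefficients both grow like $240\,\sigma_3(d)$) I expect to obtain $P_3/d! \sim \tfrac38\sigma_3(d)$, hence $N_3 \sim 3\cdot\tfrac38\sigma_3(d) = \tfrac{27}{24}\,\sigma_3(d)$.

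Combining the two estimates gives $N_3/(N_0+N_1) \to (27/24)/(5/24) = 27/5$, and therefore $s(W_{d,2})\to 5$. The hard part will be the last step: unlike the commuting case, the $3$-cycle covers have nonabelian monodromy and admit no orbit decomposition, so one must genuinely pass through the Frobenius count, control the disconnected-to-connected correction via division by $f(q)$, and identify the resulting series as a quasimodular form in order to pin down the precise leading coefficient $\tfrac{27}{24}$. A secondary point requiring care is that both $N_0+N_1$ and $N_3$ are governed by $\sigma_3(d)$ with the same arithmetic fluctuations — this is exactly what makes the ratio, and hence the limiting slope, converge to a clean constant rather than merely oscillate.
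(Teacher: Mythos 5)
Your architecture is sound, and every intermediate asymptotic you state checks out against the paper's exact formulas: $N_0=\frac{d-1}{2}\sigma_1(d)$, $N_1^{(h)}=\sigma_1(h)\sigma_1(d-h)$, $N_0+N_1\sim\frac{5}{24}\sigma_3(d)$ and $N_3\sim\frac{9}{8}\sigma_3(d)$, so the ratio $27/5$ and the limit $5$ are correct. For $N_0$ and $N_1$ your route (commuting pairs, regular abelian actions, index-$m$ sublattices of $\mathbb Z^2$, and the convolution identity for $\sum_h\sigma_1(h)\sigma_1(d-h)$) is essentially the paper's own enumeration in Section 3.1; the genuine divergence is $N_3$. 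There the paper stays elementary: from $\alpha^{-1}\beta\alpha=\beta(abc)$ it reads off the possible cycle types of $\beta$, counts directly, and obtains a closed formula for $N_3$; it then proves the exact identity $5N_3=27N_1-9N_0$ (Corollary~\ref{relation}) — with a second, purely geometric proof via $\lambda=\frac{\delta_0}{10}+\frac{\delta_1}{5}$ on $\overline{\mathcal M}_2$ together with Propositions~\ref{Delta} and~\ref{Lambda} — after which the corollary reduces to the trivial observation that $N_0=o(N_1)$. That geometric shortcut bypasses the enumeration of $N_3$ entirely and is worth knowing.

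The one genuine gap in your proposal is exactly where you flag it: the leading coefficient $P_3/d!\sim\frac{3}{8}\sigma_3(d)$ is announced as what you ``expect to obtain'' from expanding a weight-$4$ quasimodular form in the basis $\{E_4,E_2^2,E_2,1\}$, but those basis coefficients are never computed, and without them the limiting ratio $27/5$ is unproved. The Frobenius/Bloch--Okounkov route is legitimate (and generalizes to other ramification profiles), but as written the decisive number is asserted rather than derived. Two smaller points: (i) the reduction $|Cov_3|=3P_3$ with $P_3$ counting \emph{transitive pairs} $(\alpha,\beta)$ needs the remark that the commutator $3$-cycle lies in $\langle\alpha,\beta\rangle$, so its support sits in one $\langle\alpha,\beta\rangle$-orbit and transitivity of the full tuple is equivalent to transitivity of the pair; (ii) for odd $d$ the conjugation stabilizers are literally trivial by Lemma~\ref{tau}, which is cleaner than your ``vanishing fraction'' estimate and gives $N_i=|Cov_i|/d!$ exactly.
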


Note that the sharp lower bound $s_{2} = 10$ on $\overline{\mathcal M}_{2}$ can be achieved by the family $Y$ we used in \cite{C}. So $W$ probably may not provide a better bound for slope than $Y$ does
in general. It would be interesting to calculate the slope for more one-parameter families in the Hurwitz space to compare with $W$ and $Y$.  

Another beginning case is when $d=2$. For those double covers of an elliptic curve, we have the following result. 
\begin{corollary}
\label{d2}
When $d=2$, the slope $s(W_{2,g})$ equals 8. Moreover, the base locus of an effective divisor with slope lower than 8 on \Mg must contain the locus of genus $g$ curves that 
admit a double cover to an elliptic curve. 
\end{corollary}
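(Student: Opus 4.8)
The plan is to handle the two assertions separately: the slope value drops out of Theorem~\ref{slope} after specializing the combinatorial data, while the base-locus statement is a moving-curve argument confined to the bielliptic locus.

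For the slope computation I would specialize to $d=2$. The group $S_2$ contains a unique transposition, namely $(12)$, so every tuple in $Cov$ automatically has $\gamma_1 = \cdots = \gamma_{2g-2} = (12)$; in particular $\gamma_1 = \gamma_2$ for all of them, and the two cases $\gamma_1 \cap \gamma_2 = \emptyset$ and $|\gamma_1\cap\gamma_2| = 1$ of Definition~\ref{subsets} cannot occur with only two letters. Hence $N_2 = N_3 = 0$, and the decomposition $N = N_0 + N_1 + N_2 + N_3$ gives $N = N_0 + N_1$. The set $Cov$ is nonempty: taking $\alpha = \beta = \mathrm{id}$ and all $\gamma_i = (12)$ satisfies $\beta^{-1}\alpha^{-1}\beta\alpha = \gamma_1\cdots\gamma_{2g-2}$ because $(12)^{2g-2} = \mathrm{id}$, and transitivity is immediate since $(12)$ already acts transitively on $\{1,2\}$. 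Thus $N_0 + N_1 = N > 0$, and substituting $N_3 = 0$ into Theorem~\ref{slope} yields
$$s(W_{2,g}) = \frac{72(N_0+N_1)}{9(N_0+N_1)} = \frac{72}{9} = 8.$$

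For the second assertion, let $T \subset \overline{\mathcal M}_g$ be the locus of genus $g$ curves admitting a degree $2$ cover of an elliptic curve. The key geometric input I would establish is that the families $W_{2,g}$ sweep out $T$: a general point $[C]\in T$ is a double cover of some elliptic curve $E_0$ branched at $2g-2$ points, and choosing the base $X$ to be a curve through the corresponding point of $\overline{\mathcal M}_{1,2g-2}$ with constant $j$-invariant $j(E_0)$ (moving one branch point while holding $j$ fixed) produces a family $W_{2,g}$ having $[C]$ as a fiber. Hence through a general point of $T$ there passes a member of the family. Now fix any effective divisor $D = a\lambda - \sum_i b_i\delta_i$ with $s(D) < 8$, and suppose for contradiction that $T \not\subset \mathrm{Supp}(D)$. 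Then $T\setminus\mathrm{Supp}(D)$ is dense open in $T$, so I may pick a general point $[C]$ in it together with a member $W$ of the family through $[C]$. Since $W$ meets the point $[C]\notin\mathrm{Supp}(D)$, it is not contained in $\mathrm{Supp}(D)$, so $D\cdot W \geq 0$. As the generic fiber of $W$ is a smooth bielliptic curve, $W$ lies in no boundary divisor, whence $\delta_i\cdot W \geq 0$ for all $i$ and $\lambda\cdot W > 0$ (the value $s(W)=8$ being finite and nonzero, with $\lambda$ nef). Writing $b = \min_i b_i$ and using $b_i \geq b$,
$$0 \leq D\cdot W = a(\lambda\cdot W) - \sum_i b_i(\delta_i\cdot W) \leq a(\lambda\cdot W) - b(\delta\cdot W),$$
which rearranges to $s(D) = a/b \geq (\delta\cdot W)/(\lambda\cdot W) = s(W) = 8$, contradicting $s(D) < 8$. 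Therefore $T \subset \mathrm{Supp}(D)$. Since this argument uses only the numerical class of $D$, it applies to every effective divisor linearly equivalent to $D$, so $T$ is contained in the base locus of $|D|$, as claimed.

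The main obstacle is the geometric input in the second part, namely showing that the one-parameter families $W_{2,g}$ genuinely dominate $T$ and can be arranged to pass through a prescribed general bielliptic curve. This requires verifying that the constant-$j$ constraint imposed on the base $X$ does not obstruct reaching an arbitrary branch configuration on $E_0$, and that the finitely many square-root (i.e.\ $2$-torsion) choices defining a double cover of $E_0$ branched along the given divisor produce a \emph{connected} curve of the correct genus $g$. Once this dominance is in hand, the slope manipulation and the passage to the base locus are routine.
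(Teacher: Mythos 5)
Your slope computation is correct and is essentially the paper's: in $S_{2}$ the unique transposition forces $\gamma_{1}=\gamma_{2}$, so $N_{2}=N_{3}=0$ and Theorem \ref{slope} gives $s(W_{2,g})=72/9=8$. (The paper additionally records the exact counts $N=4$, $N_{0}=3$, $N_{1}=1$, which it then needs for the second half.)

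The base-locus argument has a genuine gap. Your key step is: ``$W$ meets the point $[C]\notin\mathrm{Supp}(D)$, so it is not contained in $\mathrm{Supp}(D)$, so $D\cdot W\geq 0$.'' That implication fails for a reducible curve: if $W=W_{1}\cup W_{2}$ with $[C]\in W_{1}$ but $W_{2}\subset\mathrm{Supp}(D)$, then $D\cdot W_{2}$ may be negative and so may $D\cdot W$, even though $W\not\subset\mathrm{Supp}(D)$. Equivalently, in the contrapositive direction, $D\cdot W<0$ only forces \emph{some} component of $W$ into $\mathrm{Supp}(D)$. Since Theorem \ref{slope} computes the slope of the whole family $W_{2,g}$ and not a priori of its individual components (the paper only \emph{suspects} that each component has the same slope in general), you must rule this out. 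The paper does so by proving that $W_{2,g}(E)$ is \emph{irreducible}: it is a degree $N=4$ cover of $E$, and a direct check with the monodromy actions $g_{i}$, $g_{\alpha}$, $g_{\beta}$ of Theorem \ref{monodromy} shows that the four classes of $Cov/\sim$ form a single orbit. This is the missing ingredient. Note also that the issue you flag at the end --- connectedness of the double cover $C\to E_{0}$ --- is a different and already settled matter (it is the transitivity condition built into the definition of $Cov$); what is actually needed is the connectedness of the one-parameter family $W_{2,g}(E)$ itself, i.e.\ transitivity of the monodromy on its four sheets.
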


Switch our attention a little bit. The Hurwitz space $\overline{\mathcal H}_{d,g}$ of covers to elliptic curves could be reducible. So we have to study the irreducible components of $W_{d,g}$. 
Here we do it for a special case when the base $X = E$,  $\Gamma_{1},  \ldots, \Gamma_{2g-3}$ are distinct horizontal sections, and $\Gamma_{2g-2}$ is the diagonal of $E\times E$. See Figure \ref{EE}. 
 \begin{figure}[H]
    \centering
    \psfrag{B}{$E$}
    \psfrag{pi}{$p_{i}$}
    \psfrag{Ri}{$\Gamma_{i}$}
    \psfrag{Rg}{$\Gamma_{2g-2}$}
    \psfrag{EE}{$E\times E$}
     \includegraphics[scale=0.4]{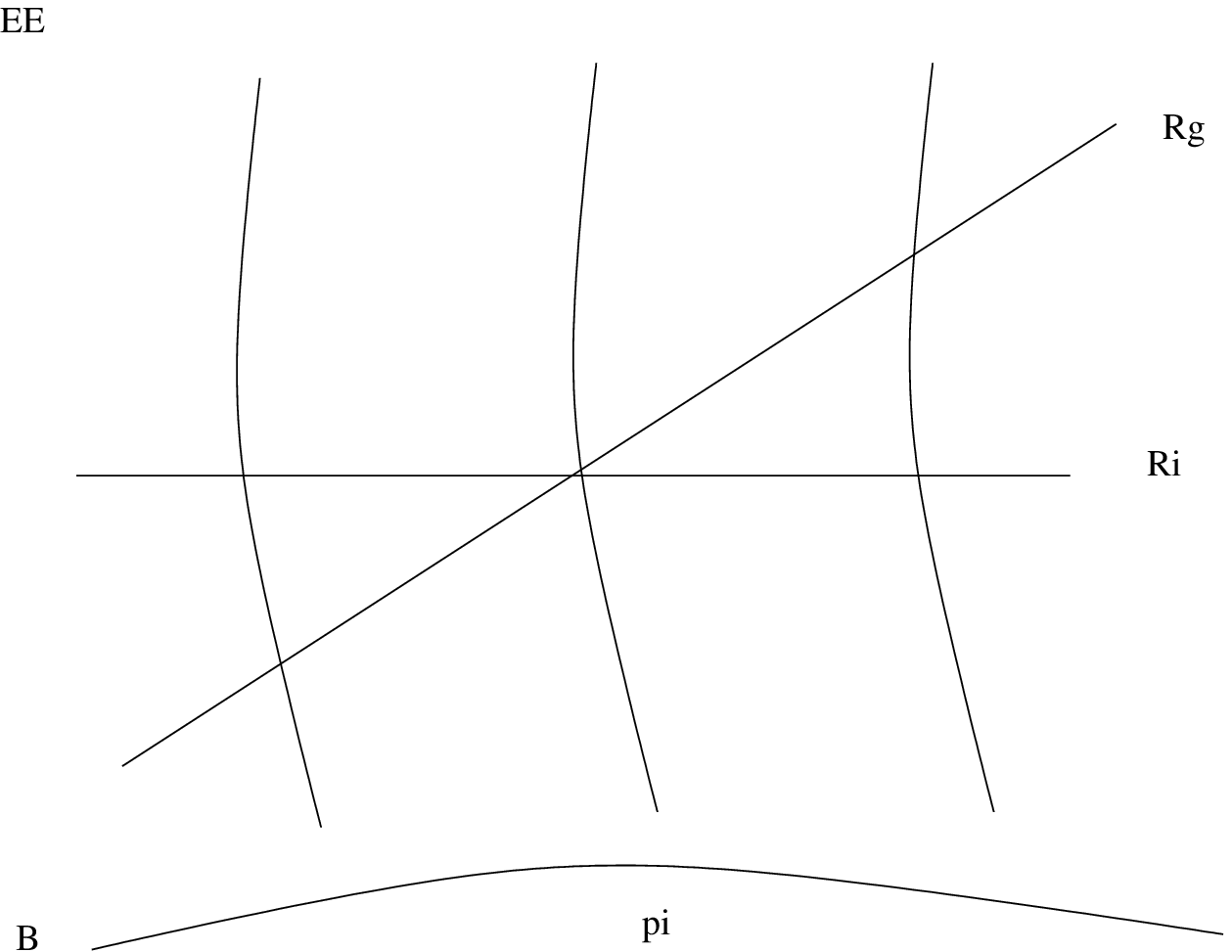}
    \caption{\label{EE} $E\times E$ with sections}
\end{figure}
This special case makes sense in that a section of $E\times X$ induces a map $X\rightarrow E$, which pulls back the covering family over $E\times E$ to a family over $E\times X$. 
We denote this special one-parameter family $W$ over $E$ by $W(E)$. Suppose $\Gamma_{2g-2}$ meets $\Gamma_{i}$ on a fiber over $p_{i}, i = 1, \ldots, 2g-3$. 
Then $W(E)\rightarrow E$ is a degree $N$ map unramified away from $p_{i}$'s. We want to study the local and global monodromy of $W(E) \rightarrow E$. 

By local monodromy, it means that if we go around $p_{i}$ once in its neighborhood along a loop $\sigma_{i}$ on the base $E$, there is an action permuting 
the $N$ sheets of $W(E)$ over the ending point $\sigma_{i}(0) = \sigma_{i}(1)$ of this loop. So the local monodromy reveals the ramification information of the map $W(E) \rightarrow E$ over $p_{i}$. Since it is local, the ramification information holds more generally for $W\rightarrow X$ over an arbitrary one-dimensional base $X$. The result is essentially the same as observed 
in \cite[Prop. 2.9.1]{HM1}.

\begin{proposition}
\label{ram}
$W\rightarrow X$ is triply ramified at a point corresponding to a local degeneration of covers in $Cov_{3}/\sim$, 
and is unramified everywhere else.  
\end{proposition}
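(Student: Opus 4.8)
The plan is to read off the ramification of the finite map $W\to X$ over each special point $b_{i}$ from its local monodromy, following the strategy of \cite[Prop.~2.9.1]{HM1}. Since $W\to X$ has degree $N$ and is unramified over $X^{0}$, with fibre the set $Cov/\!\sim$, the ramification over $b_{i}$ is governed by the permutation $\mu_{i}$ that a small loop $\sigma_{i}$ encircling $b_{i}$ induces on $Cov/\!\sim$ by parallel transport of covers: each cycle of $\mu_{i}$ of length $\ell$ corresponds to a single point of $W$ over $b_{i}$ at which $W\to X$ has ramification index $\ell$. Thus the whole statement reduces to identifying $\mu_{i}$ and computing its cycle type on the three families $Cov_{0}\cup\bigcup_{h}Cov_{1}^{(h)}$, $Cov_{2}$ and $Cov_{3}$.

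First I would pin down $\mu_{i}$. Reorder the sections so that $\Gamma_{1},\Gamma_{2}$ are the two that cross over $b_{i}$. Because the crossing is transverse, in a local coordinate $t$ on $X$ centred at $b_{i}$ the difference $p_{1}(t)-p_{2}(t)$ vanishes simply, so as $t$ traverses $\sigma_{i}$ the two branch points wind exactly once around one another. Hence $\mu_{i}$ is the Hurwitz action of the \emph{full} twist $\theta^{2}$, where $\theta$ is the elementary half-twist interchanging $p_{1}$ and $p_{2}$; explicitly $\theta^{2}$ fixes $\alpha,\beta,\gamma_{3},\dots,\gamma_{2g-2}$ and sends
\[
(\gamma_{1},\gamma_{2})\ \longmapsto\ \bigl(c\,\gamma_{1}\,c^{-1},\ c\,\gamma_{2}\,c^{-1}\bigr),\qquad c=\gamma_{1}\gamma_{2}.
\]
That it is $\theta^{2}$ and not the half-twist $\theta$ is essential: $\theta$ alone would interchange the two disjoint transpositions of a $Cov_{2}$-tuple and spuriously create ramification there, whereas $\theta^{2}$ will not.

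Next I would evaluate the cycles of $\theta^{2}$ case by case. If $\gamma_{1}=\gamma_{2}$ (the classes $Cov_{0}$ and $Cov_{1}^{(h)}$) then $c=\mathrm{id}$ and the tuple is fixed; if $\gamma_{1}\cap\gamma_{2}=\emptyset$ (the class $Cov_{2}$), writing $\gamma_{1}=(ab),\gamma_{2}=(cd)$ one has $c=(ab)(cd)$, which commutes with each of $\gamma_{1},\gamma_{2}$, so again the tuple is fixed; in both cases $\mu_{i}$ has only fixed points and $W\to X$ is unramified. If instead $|\gamma_{1}\cap\gamma_{2}|=1$ (the class $Cov_{3}$), say $\gamma_{1}=(ab),\gamma_{2}=(bc)$, then $c=(abc)$ and a direct computation gives the orbit
\[
\bigl((ab),(bc)\bigr)\ \mapsto\ \bigl((bc),(ca)\bigr)\ \mapsto\ \bigl((ca),(ab)\bigr)\ \mapsto\ \bigl((ab),(bc)\bigr),
\]
all three pairs sharing the product $(abc)$, so the relation $\beta^{-1}\alpha^{-1}\beta\alpha=\gamma_{1}\cdots\gamma_{2g-2}$ is preserved along the orbit. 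Thus $\theta^{2}$ cyclically permutes a triple of tuples, producing a $3$-cycle of $\mu_{i}$ and hence a point of triple ramification.

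The delicate step, which I expect to be the main obstacle, is to confirm that this triple does not collapse in $Cov/\!\sim$, for only then is the ramification index exactly $3$ rather than $1$. An equivalence between two of the three tuples would furnish $\tau\in S_{d}$ centralizing $\alpha,\beta,\gamma_{3},\dots,\gamma_{2g-2}$ and acting as the $3$-cycle $(abc)$ on the sheets $\{a,b,c\}$. Since $\langle\alpha,\beta,\gamma_{1},\dots,\gamma_{2g-2}\rangle$ is transitive, its centralizer acts freely; as $\tau^{3}$ centralizes the whole group yet fixes $a$, this forces $\tau^{3}=\mathrm{id}$, and the relation expressing $\gamma_{1}\gamma_{2}=(abc)$ through the centralized generators then excludes a nontrivial such $\tau$. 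This is the only point at which the global equivalence relation, rather than the purely local braid calculation, intervenes. With it settled, assembling the three cases shows that $W\to X$ is triply ramified precisely over the degenerations in $Cov_{3}/\!\sim$ and unramified everywhere else, exactly as in \cite[Prop.~2.9.1]{HM1}.
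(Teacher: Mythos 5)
Your main computation is exactly the mechanism behind the paper's proof: the paper reads the ramification off the explicit degenerate admissible covers in Section 2.1 (the local form $(u,v)\mapsto(u^{m},v^{m})$ at the node over $p$ tells you how many nearby sheets of $W$ coalesce), while you compute the full-twist action on tuples directly; these are two descriptions of the same thing, and your identification of the monodromy as $\theta^{2}$ acting by simultaneous conjugation by $\gamma_{1}\gamma_{2}$, together with the resulting orbit structure (fixed points on $Cov_{0}$, $Cov_{1}^{(h)}$, $Cov_{2}$; $3$-cycles on $Cov_{3}$), is correct. Your remark that the half-twist would spuriously ramify $Cov_{2}$ is also the right point, since the two sections are labelled and cross transversely.

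The one genuine gap is in the step you yourself flag as delicate: ruling out a collapse of a $3$-orbit in $Cov/\!\sim$. Your argument correctly pins down what a collapsing $\tau$ would have to be (it centralizes $H=\langle\alpha,\beta,\gamma_{3},\dots,\gamma_{2g-2}\rangle$, restricts to $(abc)$ on $\{a,b,c\}$, and satisfies $\tau^{3}=\mathrm{id}$ because $\tau^{3}$ centralizes the full transitive group and fixes $a$), but the concluding sentence --- that ``the relation expressing $\gamma_{1}\gamma_{2}=(abc)$ through the centralized generators then excludes a nontrivial such $\tau$'' --- is an assertion, not a proof: $\tau$ automatically commutes with $(abc)=\beta^{-1}\alpha^{-1}\beta\alpha(\gamma_{3}\cdots\gamma_{2g-2})^{-1}\in H$, so no contradiction falls out of the relation by itself. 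To close it, note first that $H$ is transitive (since $(abc)\in H$ already joins $a,b,c$, we have $\langle H,(ab)\rangle$ and $H$ have the same orbits), so $Z_{S_{d}}(H)$ is semiregular and $\tau$ must be a fixed-point-free product of $d/3$ disjoint $3$-cycles with $\tau$-blocks refining the situation to $H\subseteq Z_{S_{d}}(\tau)\cong\mathbb{Z}_{3}\wr S_{d/3}$. But $Z_{S_{d}}(\tau)$ contains no transpositions, which already kills $\tau$ whenever $g\geq 3$ (some $\gamma_{j}$, $j\geq 3$, lies in $H$); and for $g=2$ one checks that $(abc)$, being supported on a single $\tau$-block, does not lie in the commutator subgroup of $\mathbb{Z}_{3}\wr S_{d/3}$ (its block-sum is $1\neq 0$ in $\mathbb{Z}_{3}$), contradicting $(abc)=\beta^{-1}\alpha^{-1}\beta\alpha$. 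This is the analogue of the automorphism analysis the paper does carry out for $Cov_{2}$ (where the involution of the rational bridge forces the base change in Proposition \ref{Lambda}); for $Cov_{3}$ the relevant automorphism does not exist, which is precisely why the ramification index is honestly $3$. The paper itself leaves this point implicit, deferring to \cite[Prop.~2.9.1]{HM1}, so you should be credited for raising it --- but as written your resolution does not yet constitute a proof.
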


Then we have a genus formula for $W$. 
\begin{corollary}
The genus of $W_{d,g}$ is given by $$g(W_{d,g}) = 1 + N(g(X) -1 ) + k N_{3}. $$ 
\end{corollary}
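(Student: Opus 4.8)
The plan is to apply the Riemann--Hurwitz formula to the finite map $\pi: W_{d,g} \to X$. Since $\pi$ has degree $N$ and, by construction, is unramified over $X^{0}$, all of its ramification is concentrated in the fibers over the $k$ points $b_{1}, \ldots, b_{k}$ where two sections meet. Riemann--Hurwitz then reads
\[
2g(W_{d,g}) - 2 = N\bigl(2g(X) - 2\bigr) + \deg R,
\]
where $R$ is the ramification divisor of $\pi$. Solving for the genus gives $g(W_{d,g}) = 1 + N\bigl(g(X)-1\bigr) + \tfrac12 \deg R$, so the whole corollary reduces to the single evaluation $\deg R = 2kN_{3}$.

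First I would localize at one fiber $b_{i}$. Going once around $b_{i}$ in the base induces the local monodromy of $\pi$, which acts on the $N$ sheets, i.e. on the $N$ classes of $Cov/\!\sim$, by the full-twist braid move that conjugates the two colliding transpositions $\gamma_{1}, \gamma_{2}$ by their product $\gamma_{1}\gamma_{2}$ while leaving $\alpha, \beta$ and $\gamma_{3}, \ldots, \gamma_{2g-2}$ unchanged. By Proposition \ref{ram} this action fixes every class in $Cov_{0}$, in the $Cov_{1}^{(h)}$, and in $Cov_{2}$ (these yield unramified points of $\pi$ over $b_{i}$), and is \emph{triply ramified} exactly on the classes coming from $Cov_{3}/\!\sim$. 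Hence the only contribution to $R$ in the fiber over $b_{i}$ comes from the $Cov_{3}$ degenerations, each of which is a point of $W_{d,g}$ with local ramification index $3$ and therefore contributes $e-1 = 2$ to $\deg R$.

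Next I would perform the ramification bookkeeping: I would tally the triple points arising from $Cov_{3}/\!\sim$ in the fiber over $b_{i}$, multiply by the local contribution $2$, and obtain the fiber's contribution $2N_{3}$ to $\deg R$. Because the local picture at every $b_{i}$ is the same—two sections crossing transversally, with $E$ of constant modulus—every fiber contributes identically, so summing over $b_{1}, \ldots, b_{k}$ gives $\deg R = 2kN_{3}$. Substituting this into the Riemann--Hurwitz relation above yields $g(W_{d,g}) = 1 + N\bigl(g(X)-1\bigr) + kN_{3}$, as claimed.

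The main obstacle is precisely the bookkeeping in the previous step: one must read off, from the admissible-cover picture underlying Proposition \ref{ram}, how the smooth covers in $Cov_{3}$ degenerate as $\gamma_{1}$ and $\gamma_{2}$ collide and merge along a rational bridge into the product permutation, and then match this degeneration against the full-twist monodromy orbits and against the equivalence $\sim$ so as to count the triple points in each fiber with the correct multiplicity. This is the only delicate point; once the per-fiber contribution $2N_{3}$ is pinned down, the degree statement, the identification of the unramified strata $Cov_{0}$, $Cov_{1}^{(h)}$, $Cov_{2}$, and the final Riemann--Hurwitz substitution are all routine.
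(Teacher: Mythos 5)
Your overall strategy is exactly the paper's: apply Riemann--Hurwitz to the degree-$N$ map $W_{d,g}\to X$, use Proposition \ref{ram} to locate all ramification over the $k$ points $b_{1},\ldots,b_{k}$, and reduce the corollary to the single evaluation $\deg R = 2kN_{3}$. The paper's own proof is precisely this two-line argument, so in that sense you have reproduced it, and your description of the local monodromy (the full twist conjugating $\gamma_{1},\gamma_{2}$ by their product, fixing the $Cov_{0}$, $Cov_{1}^{(h)}$, $Cov_{2}$ classes, and acting with order $3$ on the $Cov_{3}$ classes) is correct.

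The gap is the step you yourself flag as ``the only delicate point'' and then do not carry out: the per-fiber contribution. As you describe the bookkeeping --- tally the triple points in the fiber over $b_{i}$ and multiply each by $e-1=2$ --- the outcome is not $2N_{3}$. The $N$ sheets of $W\to X$ near $b_{i}$ are the classes of $Cov/\!\sim$, and each ramification point of index $3$ over $b_{i}$ absorbs an entire full-twist orbit, i.e.\ \emph{three} of the $N_{3}$ classes of $Cov_{3}/\!\sim$; so the fiber over $b_{i}$ contains $N_{3}/3$ such points, and the literal Riemann--Hurwitz tally is $2\cdot N_{3}/3$ per fiber. (If instead there were $N_{3}$ points of index $3$, the fiber degree would be $3N_{3}+N_{0}+N_{1}+N_{2}=N+2N_{3}>N$, which is impossible.) So either you must justify a multiplicity convention under which each \emph{class} of $Cov_{3}/\!\sim$, rather than each \emph{point} of $W$, contributes $2$, or your own recipe yields $2kN_{3}/3$ rather than $2kN_{3}$. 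You end up asserting the number that appears in the statement, but the derivation you sketch does not produce it, and this is exactly the step that carries all the content of the corollary; it needs to be resolved explicitly (e.g.\ by reconciling the count of full-twist orbits in $Cov_{3}/\!\sim$ with the normalization of $N_{3}$ used in Propositions \ref{Delta} and \ref{Lambda}) before the proof can be considered complete.
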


In section 2.1 we will explicitly describe which admissible covers appear as the degenerations of covers in those subsets
$Cov_{i}$.  

By global monodromy, we mean that for any path in $\pi_{1}(E_{b}, p_{1}, \ldots, p_{2g-3})$, if we go along it once, there is an action permuting the $N$ 
covers over $b$. Then two covers belong to the same irreducible component of $W(E)$ if and only if they can be sent to each other by a global monodromy action. 
To fully describe the global monodromy, it is sufficient to specify the actions corresponding to a basis of $\pi_{1}(E_{b}, p_{1}, \ldots, p_{2g-3})$. Since our application in mind 
is to move $p_{2g-2}$ along $E$, let $\eta_{i}$ be a closed path separating $p_{i}, \ldots, p_{2g-3}$ from the other marked points on the base $E$.  Then $\eta_{1}, \ldots, \eta_{2g-3}, \alpha, \beta$ 
generate $\pi_{1}(E_{b}, p_{1}, \ldots, p_{2g-3})$, cf. Figure \ref{eta}.  
\begin{figure}[H]
\centering
 \psfrag{E}{$E$}
    \psfrag{p1}{$p_{1}$}
    \psfrag{pi1}{$p_{i-1}$}
    \psfrag{pi}{$p_{i}$}
    \psfrag{pg}{$p_{2g-3}$}
    \psfrag{ita}{$\eta_{i}$}
    \psfrag{a}{$\alpha$}
    \psfrag{be}{$\beta$}
  \includegraphics[scale=0.5]{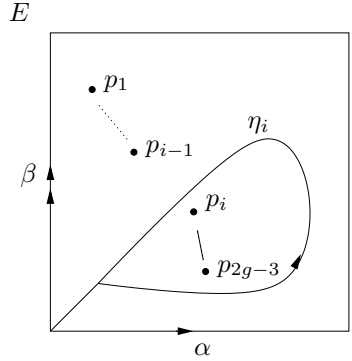}
    \caption{\label{eta} The path $\eta_{i}$ on $E$}
\end{figure}
\begin{definition} In the above set up, let $g_{i}, g_{\alpha}$ and $g_{\beta}$ 
correspond to $\gamma_{i}$, $\alpha$ and $\beta$ respectively, $1\leq i\leq 2g-3$, as the monodromy actions acting on $Cov/\sim$.   
\end{definition}

We state the monodromy criterion as follows. 

\begin{theorem}
\label{monodromy}
For a cover in $W(E)$ over $b$ corresponding to an element $ (\alpha, \beta, \gamma_{1}, \ldots, \gamma_{2g-2}) \in Cov/\sim$, the monodromy action $g_{i}$ acts like the following: 
$$g_{i}(\alpha) = \alpha, \ g_{i}(\beta) = \beta, \ g_{i} (\gamma_{j}) = \gamma_{j} \ \mbox{for}\  j < i, $$ 
$$g_{i}(\gamma_{j}) = \gamma_{2g-2}^{-1}\gamma_{j}\gamma_{2g-2}  \ \mbox{for}\  i\leq j \leq 2g-3, $$
$$g_{i}(\gamma_{2g-2}) = (\gamma_{i}\cdots \gamma_{2g-2})^{-1}\gamma_{2g-2}(\gamma_{i}\cdots \gamma_{2g-2}). $$ 
The monodromy actions $g_{\alpha}$ and $g_{\beta}$ act as follows: 
$$ g_{\alpha} (\alpha) = \alpha, \ g_{\alpha}(\beta) = \beta\gamma_{2g-2}, \ g_{\alpha}(\gamma_{2g-2}) = \alpha^{-1}\gamma_{2g-2}\alpha,$$ 
$$g_{\alpha}(\gamma_{j}) = \gamma_{2g-2}^{-1}\gamma_{j}\gamma_{2g-2}\  \mbox{for}\  j\leq 2g-3; $$
$$g_{\beta}(\alpha) = \alpha\gamma_{2g-2}^{-1}, \ g_{\beta}(\beta) = \beta, \ g_{\beta}(\gamma_{j}) = \gamma_{j} \ \mbox{for} \ j\leq 2g-3, $$ 
$$g_{\beta}(\gamma_{2g-2}) = (\beta\gamma_{1}\cdots\gamma_{2g-3})^{-1}\gamma_{2g-2} (\beta\gamma_{1}\cdots\gamma_{2g-3}). $$
\end{theorem}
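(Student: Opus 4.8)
\section*{Proof proposal}

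The plan is to interpret the action of each generator of $\pi_1(E_b, p_1, \ldots, p_{2g-3})$ as a point-pushing automorphism of the punctured fiber and to read off its effect on the monodromy tuple. By Proposition \ref{ram}, over the base $E$ minus the points $p_1, \ldots, p_{2g-3}$ the map $W(E) \to E$ is an unramified cover of degree $N$ whose fiber over $b$ is $Cov/\sim$, and the loops $\eta_1, \ldots, \eta_{2g-3}, \alpha, \beta$ generate the fundamental group of this base. Fixing $b$, the monodromy around a loop $\ell$ is induced by dragging the moving marked point $p_{2g-2}$ (the diagonal section) once around the loop in the fiber $E$ corresponding to $\ell$, while keeping $p_1, \ldots, p_{2g-3}$ fixed. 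This drag is a self-homeomorphism of $(E; p_1, \ldots, p_{2g-2})$ carrying $p_{2g-2}$ back to itself, i.e.\ a \emph{point-pushing} map; it induces an automorphism $\phi_\ell$ of $\pi_1(E_b, p_1, \ldots, p_{2g-2})$, and the new monodromy data is obtained by applying the representation $\rho$ to $\phi_\ell$ of the old generators. Since $\phi_\ell$ acts before the homomorphism to $S_d$ whereas the equivalence $\sim$ is conjugation afterward in $S_d$, the two operations commute, so $\phi_\ell$ descends to a well-defined action on $Cov/\sim$.

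First I would treat the separating loops $\eta_i$. Each $\eta_i$ bounds a disk on the torus containing exactly the punctures $p_i, \ldots, p_{2g-3}$ together with the home position of $p_{2g-2}$, and is null-homotopic in the closed torus; hence $\alpha$ and $\beta$ are untouched, giving $g_i(\alpha)=\alpha$ and $g_i(\beta)=\beta$. By the standard point-pushing description the loops $\gamma_j$ with $j<i$ lie outside the disk and are fixed, while the loops $\gamma_j$ with $i \le j \le 2g-3$ are swept over by $p_{2g-2}$ and acquire the conjugation $\gamma_{2g-2}^{-1}\gamma_j\gamma_{2g-2}$; finally $\gamma_{2g-2}$ itself is conjugated by the loop $\gamma_i \cdots \gamma_{2g-2}$ that $p_{2g-2}$ sweeps out, encircling all punctures enclosed by $\eta_i$ together with $p_{2g-2}$. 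This is exactly the behavior recorded for $g_i$, and it parallels the local picture already used in \cite[Prop.\ 2.9.1]{HM1}.

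For the non-separating cycles I would drag $p_{2g-2}$ around the cycle $\alpha$ (resp.\ $\beta$) in the fundamental domain of Figures \ref{E} and \ref{eta} and track each generator across the identified edges: the cycle being traversed is preserved, the transverse cycle picks up a single factor of $\gamma_{2g-2}$ (yielding $g_\alpha(\beta)=\beta\gamma_{2g-2}$ and $g_\beta(\alpha)=\alpha\gamma_{2g-2}^{-1}$), every $\gamma_j$ swept over is conjugated by $\gamma_{2g-2}$, and $\gamma_{2g-2}$ is conjugated by the total word along which its loop is dragged. I would then confirm that each of $g_i, g_\alpha, g_\beta$ is genuinely an automorphism of $\pi_1(E_b, p_1, \ldots, p_{2g-2})$ by verifying that it preserves the defining relation $\beta^{-1}\alpha^{-1}\beta\alpha = \gamma_1\cdots\gamma_{2g-2}$ and sends each $\gamma_j$ to a conjugate of a single $\gamma$ (hence a simple transposition under $\rho$). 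These are direct telescoping computations: for $g_i$ the conjugated middle factors collapse, $(\gamma_{2g-2}^{-1}\gamma_i\gamma_{2g-2})\cdots(\gamma_{2g-2}^{-1}\gamma_{2g-3}\gamma_{2g-2}) = \gamma_{2g-2}^{-1}(\gamma_i\cdots\gamma_{2g-3})\gamma_{2g-2}$, after which substituting the relation reproduces $\gamma_1\cdots\gamma_{2g-2}$; the analogous substitutions using $\gamma_1\cdots\gamma_{2g-2}=\beta^{-1}\alpha^{-1}\beta\alpha$ verify the relation for $g_\alpha$ and $g_\beta$.

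The hard part will be the non-separating cycles $\alpha$ and $\beta$: because these loops are homologically nontrivial on the torus, the point-push is a product of two oppositely oriented Dehn twists rather than a single conjugation, and pinning down the exact conjugating word for $\gamma_{2g-2}$ --- for instance $(\beta\gamma_1\cdots\gamma_{2g-3})^{-1}\gamma_{2g-2}(\beta\gamma_1\cdots\gamma_{2g-3})$ in $g_\beta$ --- requires carefully tracking the order in which the dragged loop crosses the remaining punctures and the seams of the fundamental domain, with every orientation matched to the chosen figures. The relation-preservation identities above are the essential consistency check that this bookkeeping has been done correctly.
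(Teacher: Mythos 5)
Your proposal is correct and follows essentially the same route as the paper: the paper proves the theorem as three propositions (one for the $\eta_i$, one for $\alpha$, one for $\beta$), in each case dragging $p_{2g-2}$ once around the corresponding loop while holding the other paths in a relatively fixed position and reading off from a figure how the generators of $\pi_1(E_b,p_1,\ldots,p_{2g-2})$ are rewritten in terms of the old ones --- i.e.\ exactly your point-pushing description. Your added telescoping check that the relation $\beta^{-1}\alpha^{-1}\beta\alpha=\gamma_1\cdots\gamma_{2g-2}$ and the equivalence relation are preserved is the same observation the paper records in the remark immediately following the theorem statement.
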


Note that these monodromy actions preserve the equality $\beta^{-1}\alpha^{-1}\beta\alpha = \gamma_{1}\cdots\gamma_{2g-2}$ and the equivalence relation.
So they are well-defined on $Cov/\sim$. 

In practice it may not be easy to apply this monodromy criterion for large $g$ and $d$. Again we will only focus on the cases $g=2$ or $d=2$ to study the genus and components of $W(E)$. 

Back to our original motivation of constructing moving curves, reader might have already realized that a general genus $g$ curve cannot admit a cover to an elliptic curve, since 
there is only $(2g-2)$-dimensional freedom for the branch points, which is less than $3g-3$. So the deformation of $W$ may not cover an open set of $\overline{\mathcal M}_{g}$. 
However, for our purpose it suffices that the union of $W_{d,g}$ for all $d$ forms a
Zariski dense subset in $\overline{\mathcal M}_{g}$. Then for any effective divisor $D$ on $\overline{\mathcal M}_{g}$, there exists at least one $W_{d,g}$ not fully contained in $D$. The slope $s(W_{d,g})$ can still bound the slope of $D$. Actually, we have a density result as follows. 

\begin{theorem}
\label{density}
The union $\bigcup\limits_{d}^{\infty} W_{d,g}$ is Zariski dense in $\overline{\mathcal M}_{g}$. 
 \end{theorem}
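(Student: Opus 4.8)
The plan is to reduce the assertion to a density property of the total Hurwitz image and then attack that by degeneration. First I would record the structural reason the statement is delicate: a general curve of genus $g$ has simple (indeed indecomposable) Jacobian, so it admits no nonconstant map to any elliptic curve whatsoever. Hence no single $W_{d,g}$, and no finite union of them, can dominate $\overline{\mathcal M}_g$. Quantitatively, the image of $\overline{\mathcal H}_{d,g}$ in $\overline{\mathcal M}_g$ has dimension at most $\dim\overline{\mathcal M}_{1,2g-2}=2g-1$, which is strictly less than $3g-3$ as soon as $g\geq 3$. So Theorem \ref{density} is genuinely a statement about the \emph{countable} union: one must show that the subvarieties $\mathrm{Im}(\overline{\mathcal H}_{d,g}\to\overline{\mathcal M}_g)$, as $d\to\infty$, are not all confined to a single proper closed subset. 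Since each $W_{d,g}$ traces out a curve inside the corresponding Hurwitz image, and these curves fill out that image as the base $X$ and its sections are varied, it suffices to prove that $\bigcup_d\mathrm{Im}(\overline{\mathcal H}_{d,g}\to\overline{\mathcal M}_g)$ is Zariski dense.

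Next I would set up the degeneration. Fixing a proper closed subset $V\subsetneq\overline{\mathcal M}_g$, the goal becomes to exhibit a cover whose source lies outside $V$. I would move to the boundary of $\overline{\mathcal M}_{1,2g-2}$, either by letting the $2g-2$ branch points collide in a prescribed pattern or by letting the target elliptic curve acquire a node, and then use admissible covers to read off the resulting stable source $C_0$. Here the decomposition of $Cov$ underlying the equality $N=N_0+N_1+N_2+N_3$, together with the explicit local pictures promised in Section 2.1 and the ramification control of Proposition \ref{ram}, is the engine: each degree $d$ and each monodromy datum in $Cov$ produces a definite degenerate source. I would choose the collision pattern so that $C_0$ is a convenient totally degenerate stable curve (a chain or tree of low-genus pieces) realizable as an admissible cover for infinitely many $d$, at a point where $\overline{\mathcal M}_g$ is smooth of dimension $3g-3$.

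With such base points in hand, I would analyze the smoothings of $C_0$ inside the Hurwitz space and the first-order deformations they induce in $\overline{\mathcal M}_g$: smoothing the target, spreading the branch points apart, and smoothing the nodes of $C_0$ over the degenerate locus each contribute tangent directions at $[C_0]$. The key point is to show that, as $d$ and the monodromy range over all admissible data threading the \emph{fixed} point $[C_0]$, the span of these directions exhausts $T_{[C_0]}\overline{\mathcal M}_g$; this forces any $V$ containing $\bigcup_d\mathrm{Im}(\overline{\mathcal H}_{d,g})$ to be everything. As an alternative I would run an induction on $g$ through the boundary divisor $\delta_0$: a general irreducible one-nodal curve has geometric genus $g-1$, whose normalization may be taken to cover an elliptic curve by the inductive hypothesis, while the $\mathbb G_m$ in the generalized Jacobian supplies the elliptic quotient in the limit, allowing one to bootstrap a genus $g$ cover from a genus $g-1$ one.

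I expect the main obstacle to be exactly this spanning/accumulation step, and the difficulty is structural rather than computational. Because every individual cover family has dimension at most $2g-1<3g-3$, no bounded amount of combinatorial data can cut out a full-dimensional piece; density must instead come from the way the loci $\mathrm{Im}(\overline{\mathcal H}_{d,g})$ proliferate and accumulate as $d\to\infty$, in the same spirit as the Zariski density of Humbert surfaces in $\mathcal A_2$. Making this rigorous — proving that the deformation directions arising from arbitrarily large degree are genuinely new, and collectively dense, rather than trapped inside one proper subvariety — is the heart of the matter, and it is precisely here that the combinatorial control over the subsets $Cov_i$, their monodromy, and a careful count of the smoothing parameters must be brought to bear.
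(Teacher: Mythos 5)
Your proposal correctly isolates the difficulty --- no single $W_{d,g}$ (nor any finite union) dominates $\overline{\mathcal M}_g$, so density must come from the accumulation of the images of $\overline{\mathcal H}_{d,g}$ as $d\to\infty$ --- but it does not close that difficulty, and you say as much yourself. The degeneration scheme has two problems. First, the ``spanning/accumulation step'' is never carried out: you exhibit no mechanism forcing the deformation directions coming from covers of arbitrarily large degree to escape a given proper closed subset $V$. Second, even granting it, showing that the tangent directions at one fixed boundary point $[C_0]$ span $T_{[C_0]}\overline{\mathcal M}_g$ would not prove Zariski density: a proper closed subvariety containing all the Hurwitz images could be singular at $[C_0]$ with full Zariski tangent space there. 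The inductive alternative through $\delta_0$ is likewise only a heuristic. As written, the argument is a program rather than a proof.

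The paper closes the gap by an entirely different, transcendental mechanism, which is the missing idea. It works in the locus $\Bbb H$ of the Hodge bundle parameterizing pairs $(C,\omega)$ with $\omega$ a holomorphic $1$-form with simple zeros $q_1,\dots,q_{2g-2}$; by Kontsevich, the relative periods $\Phi(C,\omega)=\bigl(\int_{\gamma_1}\omega,\dots,\int_{\gamma_{4g-3}}\omega\bigr)$ give local coordinates on $\Bbb H$. The cited Eskin--Okounkov lemma says that $(C,\omega)$ arises from a simply branched cover $f:C\to T=\mathbb C/\Lambda$ with $\omega=f^{-1}(dz)$, of degree recovered as the area $\frac{\sqrt{-1}}{2}\int_C\omega\wedge\overline{\omega}$, exactly when the $2g$ absolute periods lie in the lattice $\Lambda$. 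Hence $\bigcup_d W_{d,g}$ corresponds to the locus in $\Bbb H$ where $2g$ of the local coordinates take values in $\Lambda$, a countable union of leaves indexed by $\Lambda^{2g}$ that is Zariski dense in $\Bbb H$; since $\Bbb H$ dominates $\mathcal M_g$, the theorem follows. To salvage your approach you would need a genuinely new input at the accumulation step; the lattice-point description of torus covers in period coordinates is precisely that input, and no purely boundary-theoretic substitute for it appears in your outline.
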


Compared with the density result in \cite[Thm 1.7]{C}, the above is not surprising in the sense that covers in $Y$ can be regarded as degenerations of covers in $W$ by putting the branch points all together. 

The paper is organized in the following way. In section 2, we study the global geometry of $W$, namely, we prove the slope formula, the density result and the monodromy criterion. 
In section 3, we apply the general theory about $W$ to small genus or degree to derive various corollaries.  
Throughout the paper, we work over $\mathbb C$. A divisor on \Mg always denotes a $\mathbb Q$-Cartier divisor in the form $ a\lambda - \sum_{i=0}^{[g/2]} b_{i}\delta_{i}, a, b_{i} > 0$. We also assume that 
$g\geq 2$ all the time. 

{\bf Acknowledgements.} Special thanks are due to my advisor Joe Harris who always encouraged me and gave me many valuable ideas. 
I am also benefited from conversations with Gavril Farkas, Andrei Okounkov and Rahul Pandharipande, who listened to the results \cite{C} and 
suggested the author to calculate slopes for more families of covers. Finally I want to thank Martin Schmoll for pointing out some references to me. 

\section{The geometry of $W$}
In this section, we will focus on the slope, density and monodromy of $W$. Theorems \ref{slope}, \ref{monodromy} and \ref{density} will be verified one by one.   
\subsection{Slope} \ 

Recall our set up for $W$ in the introduction section. $W$ is the space of simply branched admissible covers of an elliptic curve over a one-dimensional base $X$. 
$X$ admits $2g-2$ sections $\Gamma_{1}, \ldots, \Gamma_{2g-2}$ to $E\times X$. No three sections meet at a common point. If two sections meet, they meet transversely and a fiber can have at most one such intersection point. 

Suppose two sections $\Gamma_{1}$ and $\Gamma_{2}$ meet at $p$ on a fiber $E$ over a base point $b_{l}\in X$. To describe the admissible cover, we need to blow up $E\times X$ at $p$, so the fiber becomes 
a nodal union of $E$ with an exceptional curve $\mathbb P^{1}$. The proper transforms $\widetilde{\Gamma}_{1}$ and $\widetilde{\Gamma}_{2}$
of $\Gamma_{1}$ and $\Gamma_{2}$ meet the exceptional $\mathbb P^{1}$ at $p_{1}$ and $p_{2}$ respectively. 
Suppose further that $E$ meets the other sections at $p_{3}, \ldots, p_{2g-2}$. See Figure \ref{X}. 
 \begin{figure}[H]
    \centering
    \psfrag{X}{$X$}
    \psfrag{E}{$E$}
    \psfrag{Ri}{$\widetilde{\Gamma}_{1}$}
    \psfrag{Rj}{$\widetilde{\Gamma}_{2}$}
    \psfrag{Rm}{$\Gamma_{m}$}
    \psfrag{pi}{$p_{1}$}
    \psfrag{pj}{$p_{2}$}
    \psfrag{p}{$p$}
    \psfrag{qm}{$p_{m}$}
    \psfrag{P}{$\mathbb P^{1}$}
    \psfrag{bl}{$b_{l}$}
     \includegraphics[scale=0.4]{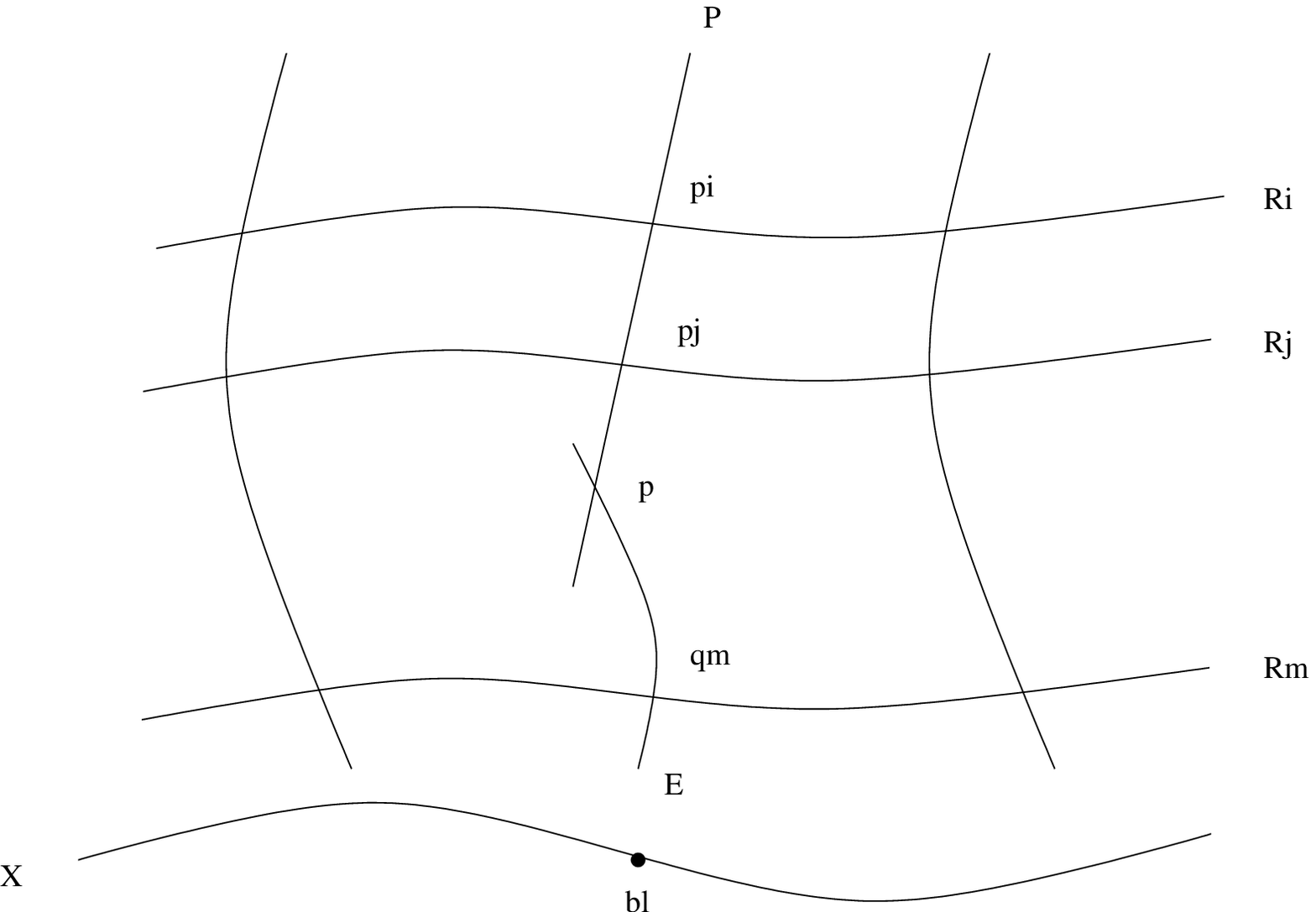}
    \caption{\label{X} The blow-up of $E\times X$ at $p$}
\end{figure}
A smooth cover in $W$ over a general point $b\in X$ would degenerate to a singular admissible cover of $E\cup\mathbb P^{1}$ when $b$ approaches to $b_{l}$. The type of the admissible cover depends on 
the combinatorial type of the element $ (\alpha, \beta, \gamma_{1}, \ldots, \gamma_{2g-2}) \in Cov$ associated to the smooth cover. More precisely, we have already decomposed $Cov$ using 
the subsets $Cov_{0}; \ Cov^{(h)}_{1}, 1\leq h \leq [\frac{d}{2}]; \ Cov_{2};\  Cov_{3}$, cf. Definition \ref{subsets}. The type of the admissible cover is uniquely determined by the subset 
$ (\alpha, \beta, \gamma_{1}, \ldots, \gamma_{2g-2})$ belongs to. We will follow a similar description as
\cite{HM1} to draw the degenerate admissible covers and illustrate their stable limits. 

The admissible covers in Figure \ref{Cov0} correspond to the degeneration of covers in $Cov_{0}$. $C_{0}$ is a smooth genus $g-1$ curve admitting a degree $d$ cover to $E$ simply branched at
$p_{3}, \ldots, p_{2g-2}$. There are $d-1$ rational curves attached to $C_{0}$. The first one meets $C_{0}$ at 
two distinct points in the pre-image of $p$ and admits a double cover to the exceptional $\mathbb P^{1}$ simply branched at $p_{1}, p_{2}$. The other $d-2$ rational curves are all tails each 
of which meets $C_{0}$ at one point in the pre-image of $p$ and maps to the exceptional $\mathbb P^{1}$ isomorphically. The stable limit of this covering is an irreducible one-nodal curve of 
geometric genus $g-1$ by identifying two points of $C_{0}$.  
\begin{figure}[H]
    \centering
    \psfrag{E}{$E$}
    \psfrag{P}{$\mathbb P^{1}$}
    \psfrag{e}{$(d-2)\  \mathbb P^{1}$'s}
    \psfrag{C0}{$C_{0}$}
    \psfrag{p}{$p$}
    \psfrag{pi}{$p_{1}$}
    \psfrag{pj}{$p_{2}$}
    \psfrag{q1}{$p_{3}$}
    \psfrag{qg}{$p_{2g-2}$}
    \includegraphics[scale=0.4]{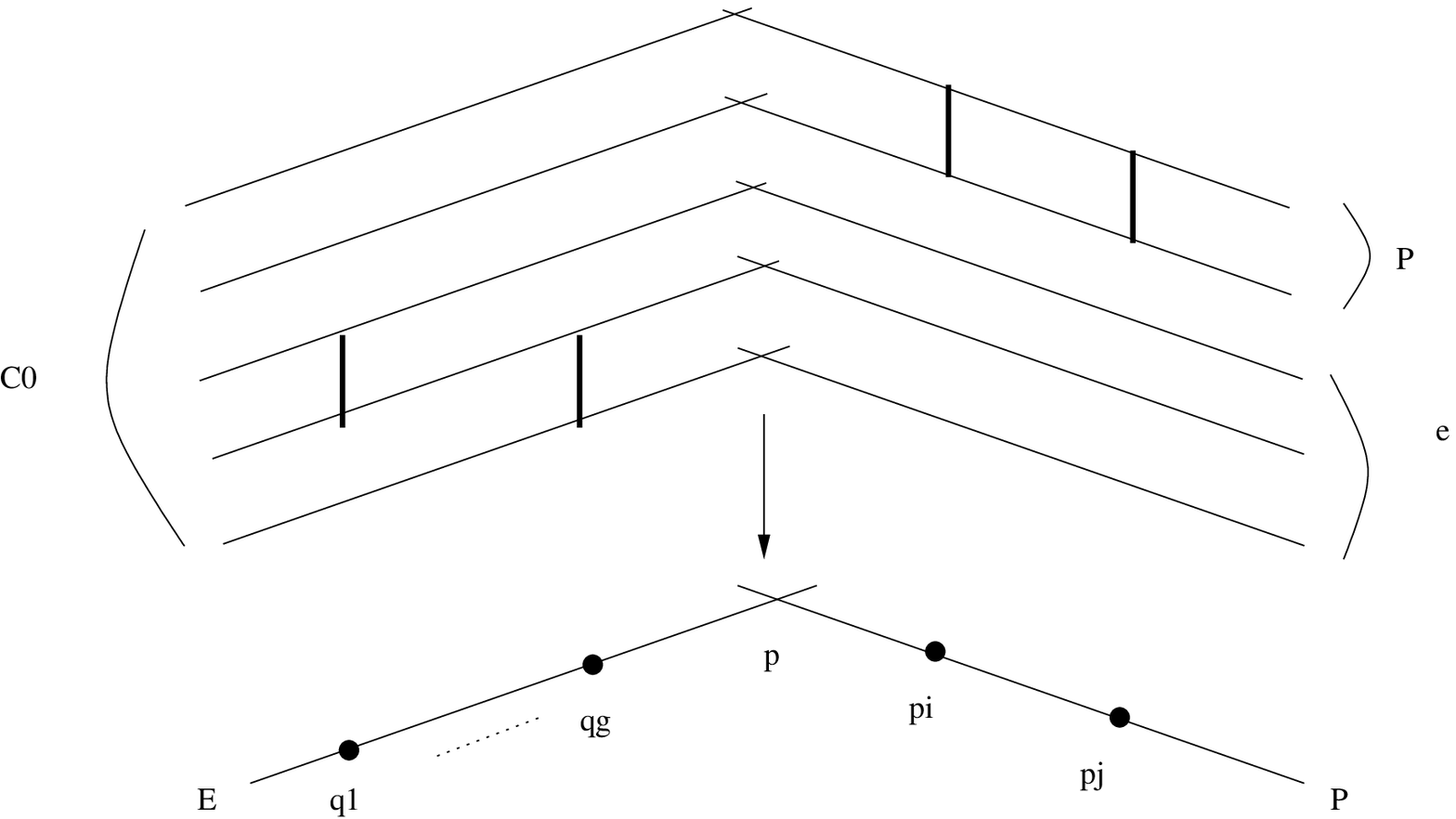}
    \caption{\label{Cov0} Degeneration of covers in $Cov_{0}$}
\end{figure}
The admissible covers in Figure \ref{Cov1} correspond to the degeneration of covers in $Cov^{(h)}_{1}, 1\leq h \leq [\frac{d}{2}]$. By the definition of $Cov^{(h)}_{1}$, without loss of the generality we can assume that 
$\langle\alpha, \beta, \gamma_{3}, \ldots, \gamma_{2g-2} \rangle$ acts transitively on $\{1,\ldots, h\}$ and $\{h+1, \ldots, d\}$ respectively.   
Suppose there are $m$ cycles out of $(\gamma_{3}, \ldots, \gamma_{2g-2})$ whose letters are contained in $\{1,\ldots, h\}$, and $2g-4-m$ cycles 
whose letters are in $\{h+1, \ldots, d\}$. Then the genera of $C_{1}$ and $C'_{1}$ satisfy 
$2g(C_{1}) - 2 = m, \ 2g(C'_{1})-2 = 2g-4-m.$ 
It is also not hard to directly check that $m$ must be even in this case. 
$C_{1}$ admits a degree $h$ map to $E$ simply branched at 
$m$ intersection points of $E$ with the corresponding $m$ sections. Similarly, $C'_{1}$ admits a degree $d-h$ map to $E$ simply branched at the other $2g-4-m$ intersection points on $E$. 
There is a rational curve connecting $C_{1}$ and $C'_{1}$ at two points in the pre-image of $p$. Moreover, it admits a double cover to the exceptional $\mathbb P^{1}$ simply branched at $p_{1}, p_{2}$. 
There are also $h-1$ rational tails attached to $C_{1}$ and $d-h-1$ rational tails attached to $C'_{1}$ at the other points in the pre-image of $p$. Each rational tail maps to the 
exceptional $\mathbb P^{1}$ isomorphically. The stable limit of this covering is a reducible one-nodal union of $C_{1}$ and $C'_{1}$. Note that $g(C_{1}) + g(C'_{1}) = g$. 
\begin{figure}[H]
    \centering
    \psfrag{E}{$E$}
    \psfrag{C1}{$C_{1}$}
    \psfrag{C2}{$C'_{1}$}
    \psfrag{P}{$\mathbb P^{1}$}
    \psfrag{h}{$(h-1)\  \mathbb P^{1}$'s}
    \psfrag{dh}{$(d-h-1)\ \mathbb P^{1}$'s}
   \psfrag{p}{$p$}
     \psfrag{pi}{$p_{1}$}
    \psfrag{pj}{$p_{2}$}
    \psfrag{q1}{$p_{3}$}
    \psfrag{qg}{$p_{2g-2}$}
    \includegraphics[scale=0.4]{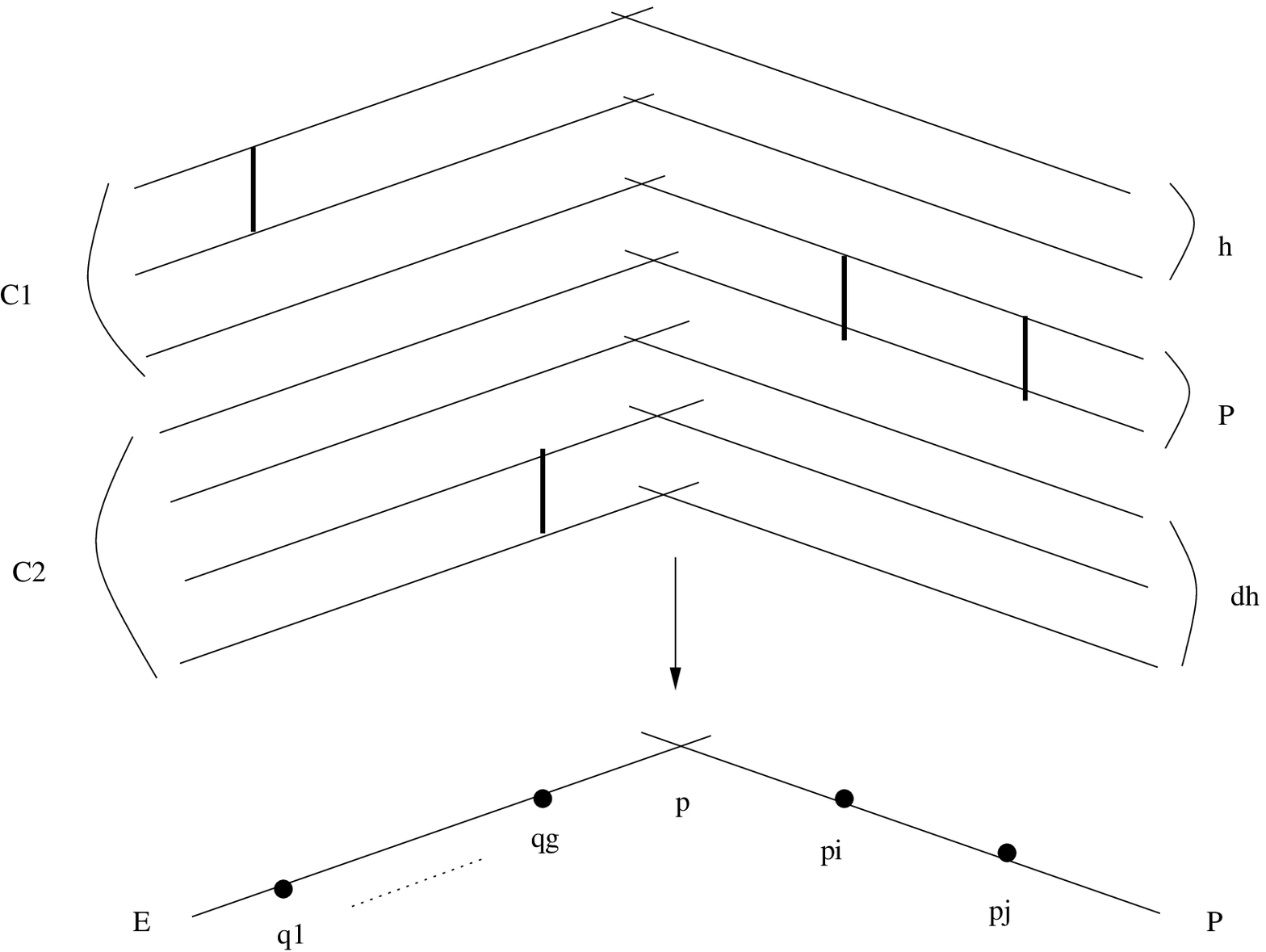}
    \caption{\label{Cov1} Degeneration of covers in $Cov^{(h)}_{1}$}
\end{figure}
The admissible covers in Figure \ref{Cov2} correspond to the degeneration of covers in $Cov_{2}$. $C_{2}$ is a smooth genus $g$ curve that admits a degree $d$ cover to $E$ simply branched at 
$p_{3}, \ldots, p_{2g-2}$ as well as simply ramified at two points $x, y$ over $p$. Attach to $C_{2}$ a rational curve that admits a double cover of the exceptional $\mathbb P^{1}$ by identifying 
one of the ramification points with $x$. In the same way, attach another rational curve to $C_{2}$ at $y$. Locally around $x$ and $y$, the covering map
to $p$ is given by $(u,v) \rightarrow (u^{2}, v^{2}).$ 
Finally, attach $d-4$ rational tails to $C_{2}$ at the other points in the pre-image of $p$. 
Each rational tail admits an isomorphic map to the exceptional $\mathbb P^{1}$. The stable limit of this covering is just $C_{2}$. 
\begin{figure}[H]
    \centering
    \psfrag{E}{$E$}
    \psfrag{D}{$C_{2}$}
    \psfrag{Q}{$(d-4)\ \mathbb P^{1}$'s}
    \psfrag{P}{$\mathbb P^{1}$}
    \psfrag{x}{$x$}
    \psfrag{y}{$y$}
     \psfrag{p}{$p$}
     \psfrag{pi}{$p_{1}$}
    \psfrag{pj}{$p_{2}$}
    \psfrag{q1}{$p_{3}$}
    \psfrag{qg}{$p_{2g-2}$}
    \includegraphics[scale=0.4]{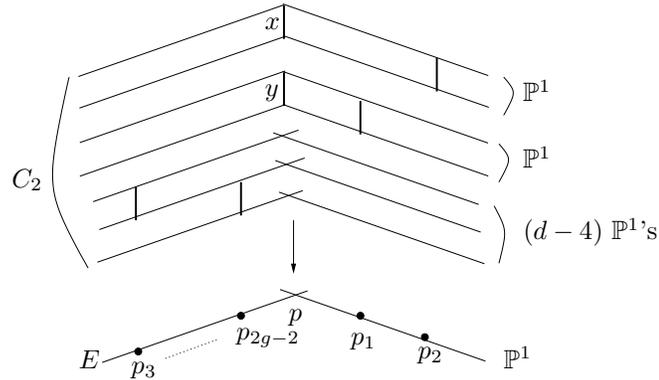}
    \caption{\label{Cov2} Degeneration of covers in $Cov_{2}$}
\end{figure}
Finally for covers in $Cov_{3}$, their degeneration corresponds to the admissible covers as Figure \ref{Cov3}. $C_{3}$ is a smooth genus $g$ curve that admits a degree $d$ cover to $E$, simply branched at 
$p_{3}, \ldots, p_{2g-2}$ and triply ramified at a point $x$ over $p$. Take a rational curve admitting a triple cover of the exceptional $\mathbb P^{1}$ that has a triple ramification point $y$ over $p$ 
and two other simple ramification points over $p_{1}, p_{2}$. Attach this rational curve to $C_{3}$ by gluing $y$ to $x$. Locally around $x=y$, the covering map to $p$ is given by $(u,v)\rightarrow (u^{3}, v^{3})$. 
Then attach $d-3$ rational tails to $C_{3}$ at the other points in the pre-image of $p$.
Each rational tail maps isomorphically to the exceptional $\mathbb P^{1}$. The stable limit of this covering is just $C_{3}$. 
\begin{figure}[H]
    \centering
    \psfrag{E}{$E$}
    \psfrag{F}{$C_{3}$}
    \psfrag{Q}{$(d-3)\ \mathbb P^{1}$'s}
    \psfrag{P}{$\mathbb P^{1}$}
    \psfrag{x}{$x$}
    \psfrag{y}{$y$}
     \psfrag{p}{$p$}
     \psfrag{pi}{$p_{1}$}
    \psfrag{pj}{$p_{2}$}
    \psfrag{q1}{$p_{3}$}
    \psfrag{qg}{$p_{2g-2}$}
    \includegraphics[scale=0.4]{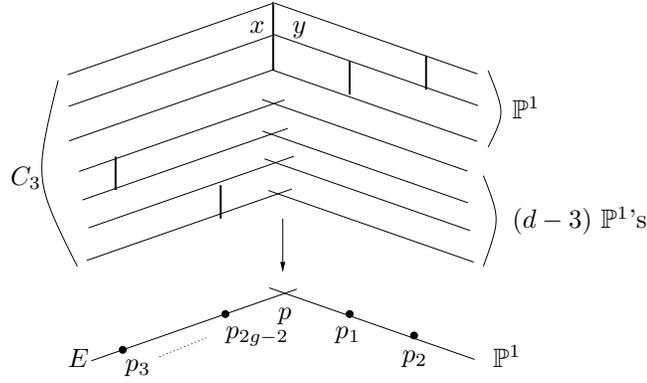}
    \caption{\label{Cov3} Degeneration of covers in $Cov_{3}$}
\end{figure}
To calculate the slope of $W$, we want to know how $W$ intersects the boundary of $\overline{\mathcal M}_{g}$. 

\begin{proposition}
\label{Delta}
 We have the following intersection numbers: 
 $$ W\ldotp \delta_{0} = 2kN_{0}, \ W\ldotp (\delta_{1}+\cdots + \delta_{[\frac{g}{2}]}) = 2k N_{1}. $$
 In particular, the intersection of $W$ with the total boundary is 
 $$W\ldotp \delta = 2k (N_{0} + N_{1}). $$
\end{proposition}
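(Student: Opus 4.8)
The plan is to localize the computation over the finitely many special base points. Since $W\to X$ is finite of degree $N$ and unramified over $X^{0}$, the image of $W$ in $\overline{\mathcal M}_{g}$ can meet the boundary only at fibers lying over $b_{1},\dots,b_{k}$, where two sections collide. Fixing one such $b_{l}$, I would first read off from Figures \ref{Cov0}--\ref{Cov3} which degenerations are boundary points: a cover in $Cov_{0}$ limits to an irreducible one-nodal curve of geometric genus $g-1$ (the two glued points lie on the connected curve $C_{0}$, so the node is non-separating, giving a point of $\delta_{0}$); a cover in $Cov^{(h)}_{1}$ limits to $C_{1}\cup C'_{1}$ with $g(C_{1})+g(C'_{1})=g$ and both factors of genus $\ge 1$ (each dominates $E$), so the node is separating and the limit lies on some $\delta_{i}$, $1\le i\le[\frac{g}{2}]$; and a cover in $Cov_{2}$ or $Cov_{3}$ has smooth stable limit $C_{2}$ or $C_{3}$ and hence contributes nothing to $\delta$. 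By Proposition \ref{ram} the map $W\to X$ is unramified at all $Cov_{0},Cov_{1},Cov_{2}$ points, so the fiber of $W$ over $b_{l}$ contains exactly $N_{0}$ reduced boundary points of type $Cov_{0}$ and exactly $\sum_{h}N^{(h)}_{1}=N_{1}$ reduced boundary points of type $Cov_{1}$.

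The crux is the local intersection multiplicity of $W$ with $\delta$ at such a node. I would extract it from the local model of the admissible cover. Near the colliding points $p_{1},p_{2}$ the two sheets interchanged by $\gamma_{1}=\gamma_{2}$ form a double cover of $E$ branched at $p_{1},p_{2}$; writing $x$ for a local coordinate on $E$ near $p$ and $t$ for the coordinate on $X$ at $b_{l}$, and using that the sections meet transversely, this double cover takes the form $y^{2}=x^{2}-a(t)^{2}$ after completing the square, with $\mathrm{ord}_{t}\,a(t)=1$. The substitution $u=y+x$, $v=y-x$ turns the total space into $uv=-a(t)^{2}$, i.e. $uv=(\mathrm{unit})\,t^{2}$. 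Thus the node of the stable limit is smoothed to order exactly $2$ in the base parameter, so $W$ meets $\delta$ with local multiplicity $2$ at this point. This is the sole source of a factor different from $1$, and it is the same phenomenon recorded in \cite[Prop. 2.9.1]{HM1}; the identical computation governs the separating node coming from $Cov^{(h)}_{1}$.

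It remains to assemble the count. Because the set $Cov$ and its subsets in Definition \ref{subsets} are defined symmetrically in the branch points, the cardinalities $N_{0},N^{(h)}_{1},N_{2},N_{3}$ do not depend on which pair of sections collides over a given $b_{l}$; hence all $k$ base points contribute equally. Multiplying the number of boundary points of each type by the local multiplicity $2$ and summing over $b_{1},\dots,b_{k}$ yields $W\ldotp\delta_{0}=2kN_{0}$ and $W\ldotp(\delta_{1}+\cdots+\delta_{[\frac{g}{2}]})=2k\sum_{h}N^{(h)}_{1}=2kN_{1}$, and adding these gives $W\ldotp\delta=2k(N_{0}+N_{1})$.

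I expect the main obstacle to be the multiplicity computation itself---rigorously passing from the admissible cover, with its exceptional $\mathbb P^{1}$ and bridge component, to the stable model and reading off the order of smoothing of the node. The delicate point is to verify that no base change in $t$ is needed and that the total space of the stabilized family acquires precisely an $A_{1}$ singularity at the node, which is exactly what forces $uv=t^{2}$ rather than $uv=t$. One must also check that the contracted rational tails in the $Cov_{2}$ and $Cov_{3}$ pictures introduce no spurious nodes, so that those covers genuinely contribute nothing to the boundary.
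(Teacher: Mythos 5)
Your proposal is correct and follows essentially the same route as the paper: classify the admissible covers over the $k$ special fibers by the subsets $Cov_{0}$, $Cov^{(h)}_{1}$, $Cov_{2}$, $Cov_{3}$, observe which boundary divisor each stable limit lies on, and attach local multiplicity $2$ to each boundary point. The only cosmetic difference is that you justify the multiplicity by the explicit local equation $uv=(\mathrm{unit})\,t^{2}$ of the smoothing, whereas the paper cites the self-intersection $-2$ of the bridge rational curve (whose contraction produces exactly that $A_{1}$ singularity) --- these are the same computation.
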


\begin{proof}
These intersection numbers can be read off using the information of the above admissible covers.
 
For covers in $Cov_{0}$, the degeneration of each contributes 2 to $W\ldotp \delta_{0}$, since the stable limit only lies in $\Delta_{0}$ 
and the rational curve that admits a double cover to the exceptional $\mathbb P^{1}$ has self-intersetion $-2$. 

For covers in $Cov^{(h)}_{1}, 1\leq h \leq [\frac{d}{2}]$, the degeneration of each contributes 2 to $W\ldotp (\delta_{1}+\cdots + \delta_{[\frac{g}{2}]})$, 
since the stable limit is a reducible one-nodal union of two smooth curves and the rational curve that admits a double cover also has self-intersetion $-2$. 

For covers in $Cov_{2}$ and $Cov_{3}$, their degenerations do not contribute to the intersection with the boundary components, since the stable limits are smooth 
genus $g$ curves. 

Now, the $2g-2$ sections meet pair-wisely at $k$ points, namely, $\sum_{i < j} \Gamma_{i}\ldotp \Gamma_{j} = k$. The proposition follows immediately. 
\end{proof}

Next, we will work out the degree of $\lambda$ on $W$. 

\begin{proposition}
\label{Lambda}
In the above set up, we have 
$$ W\ldotp \lambda = k \Big( \frac{N_{0} + N_{1}}{4} + \frac{N_{3}}{36} \Big). $$
\end{proposition}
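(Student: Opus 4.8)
The plan is to reduce the computation to the self-intersection of the relative dualizing sheaf of the universal family and then localize that number at the finitely many degenerate fibers. Write $f\colon \mathcal C \to W$ for the family of stable genus $g$ curves pulled back to $W$, and recall the fundamental relation $12\lambda = \kappa + \delta$ on any one-parameter family, where $\kappa = \omega_{\mathcal C/W}^2$ is the self-intersection of the relative dualizing sheaf and $\delta = W\ldotp\delta$ is the total number of nodes. Since Proposition \ref{Delta} already gives $W\ldotp\delta = 2k(N_0+N_1)$, it suffices to prove $\kappa = k(N_0+N_1) + \tfrac{k}{3}N_3$; the stated value then follows immediately from $W\ldotp\lambda = \tfrac{1}{12}(\kappa + W\ldotp\delta)$.

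To compute $\kappa$ I would exploit the covering structure. Let $\phi\colon \mathcal C \to T$ be the map to the family of targets $T\to W$, where $T$ is the pullback of the blow-up of $E\times X$ along the $k$ collision points, so the general fiber of $T$ is $E$ and the fibers over collisions are $E\cup\mathbb P^1$. Riemann--Hurwitz gives $\omega_{\mathcal C/W} = \phi^*\omega_{T/W} + R$ with $R$ the ramification divisor, and expanding yields $\omega_{\mathcal C/W}^2 = d\,\omega_{T/W}^2 + 2\,\omega_{T/W}\ldotp\phi_*R + R^2$. The key simplification is that $E$ is an \emph{elliptic} curve, so $\omega_E\cong\mathcal O_E$ and $\omega_{T/W}$ is trivial away from the degenerate fibers; consequently the only surviving contributions to $\omega_{\mathcal C/W}^2$ are local, concentrated at the $k$ collision configurations.

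The heart of the proof is the local analysis at each collision, organized by the type of the underlying element of $Cov$ and read off from the explicit admissible covers of Figures \ref{Cov0}--\ref{Cov3}. For a cover in $Cov_0$ or $Cov_1^{(h)}$ the special fiber carries a rational $(-2)$-curve double covering the exceptional $\mathbb P^1$, and after passing to the stable model (contracting the unstable tails) this degeneration contributes $1$ to $\kappa$; a cover in $Cov_2$ has a smooth stable limit with no extra ramification along the fiber and contributes $0$; and a cover in $Cov_3$ carries a rational curve that is a \emph{triple} cover of the exceptional $\mathbb P^1$, totally ramified over the node, whose local $\omega^2$-contribution is $\tfrac13$ --- this is exactly where the factor behind $N_3/36$ enters. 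Summing these local contributions over the $k$ collision fibers, at each of which $N_0$, $N_1$, $N_2$, $N_3$ of the $N$ covers degenerate of the respective types, gives $\kappa = k(N_0+N_1) + \tfrac{k}{3}N_3$, and combining with $W\ldotp\delta$ finishes the proof. I would independently verify the three local numbers $1$, $0$, $\tfrac13$ by Noether's formula on a smooth model of $\mathcal C$, comparing $\chi(\mathcal O_{\mathcal C})$, the topological Euler characteristic $e(\mathcal C)$ (whose corrections at singular fibers are readily computed from the admissible-cover pictures), and $K_{\mathcal C}^2$.

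The hardest part is precisely this local computation of the $\omega^2$-contributions at the admissible-cover degenerations: one must correctly track the self-intersections of the exceptional $\mathbb P^1$, the double- and triple-cover rational components, and all the rational tails, and then account for the change in $\omega^2$ under the contractions producing the family of \emph{stable} curves. Extracting the fractional value $\tfrac13$ for the totally ramified triple point is the most delicate step, and carrying it out carefully --- or cross-checking it against the Euler-characteristic computation --- is what pins down the coefficient of $N_3$.
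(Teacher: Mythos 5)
Your skeleton is the same as the paper's --- Mumford's relation $12\lambda=\kappa+\delta$, Riemann--Hurwitz for the covering map, and a case analysis at the $k$ collision fibers organized by $Cov_{0},Cov_{1}^{(h)},Cov_{2},Cov_{3}$ --- but your bookkeeping is different in a way that matters. The paper applies $12\lambda=\kappa+\delta$ to the family of \emph{admissible} covers $\widetilde T\to\widetilde W$ itself: there $\delta_{\widetilde T}$ counts \emph{all} nodes of the admissible covers (including the attaching nodes of the rational tails, with fractional weights $\tfrac1m$ at nodes where $m$ sheets of $W\to X$ come together), and $\kappa=\omega^2_{\widetilde T/\widetilde W}$ is computed globally from Riemann--Hurwitz, giving $nNk(3-d)$; no stable reduction is ever performed, since $\lambda$ is insensitive to the birational model of the fibers. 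You instead apply the relation to the \emph{stable} family, importing $W\ldotp\delta=2k(N_0+N_1)$ from Proposition~\ref{Delta}, so that everything rests on the single claim $\kappa_{\mathrm{stable}}=k(N_0+N_1)+\tfrac{k}{3}N_3$, i.e.\ on the local contributions $1$, $0$, $\tfrac13$ for the four degeneration types. Those numbers are in fact correct (one can recover them from the paper's computation by adding $+1$ for each contracted $(-1)$-tail), so your route is viable and arguably cleaner to state.

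The gap is that you never derive those three numbers: you assert them, flag their derivation as ``the hardest part,'' and defer it. Given Proposition~\ref{Delta}, the values $1$, $0$, $\tfrac13$ \emph{are} the proposition, so as written the argument assumes its conclusion. To close the gap you must (i) compute $\omega^2$ of the admissible-cover family near each degenerate fiber via Riemann--Hurwitz --- and note that the ``localization'' you invoke is not a consequence of $\omega_E\cong\mathcal O_E$ alone: the ramification divisor $R$ is horizontal and $R^2$ is a priori global, and it only concentrates at the collisions because $\Gamma_i^2=0$ on $E\times X$, whence $\widetilde\Gamma_i^2=-k_i$ on the blow-up; and (ii) track the change of $\omega^2$ under contracting the $(-1)$-tails and $(-2)$-bridges to reach the stable model. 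You also omit the base change: $\overline{\mathcal H}_{d,g}$ is only a coarse moduli space, and the $Cov_2$ degenerations have automorphisms (the involutions of the two double-cover tails), so the surface $\mathcal C$ with its map $\phi$ to $T$ does not exist over $W$ itself; the paper passes to a degree-$n$ cover $\widetilde W\to W$ precisely to make your diagram legitimate, and this is where the local equations $zw=s^2$, $uv=s$ and the $A_1$ singularity enter. Finally, your proposed ``independent check'' via Noether's formula is essentially $12\lambda=\kappa+\delta$ again, so it does not give independent leverage on the coefficient $\tfrac13$.
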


\begin{proof}
Let Bl($E\times X$) be the blow-up of $E\times X$ at the $k$ intersection points of $\Gamma_{1}, \ldots, \Gamma_{2g-2}$. Then there are $k$ special fibers of 
Bl($E\times X$) over $X$, each of which is a union of $E$ with an exceptional $\mathbb P^{1}$. Let $\widetilde{\Gamma}_{i}$ denote the proper transform of $\Gamma_{i}$. 
Also suppose that a section $\Gamma_{i}$ meets other sections $k_{i}$ times. Then $\sum_{i=1}^{2g-2}k_{i} = 2k$. 
Pull back Bl($E\times X$) via the map $W\rightarrow X$ to a surface $S$ over $W$. We would like to form a universal admissible covering as follows,
$$\xymatrix{
T \ar[rr] \ar[dr] & & S \ar[dl] \\
& W &  }$$
such that over a singular fiber of $S$, a suitable degenerate admissible cover as described before would appear as the corresponding fiber of $T$. 
This is not possible without a \emph{base change}, already noticed in \cite{HM1}, due to the fact that $\overline{\mathcal H}_{d,g} $ is a coarse moduli space rather than fine.

In particular, the admissible cover corresponding to the degeneration of covers in $Cov_{2}$, cf. Figure \ref{Cov2}, 
 has automorphisms that come from 
the involution of the two rational curves admitting double covers of the exceptional $\mathbb P^{1}$. Locally we have to make a degree 2 base change such that 
around the node $p$ of the singular fiber, the surface $S$ is given by local equation $zw = s^{2}$. Upstairs there are two nodes $x$ and $y$ simply ramified over $p$, 
and their local equations for $T$ look like $uv = s$. The map is given by $z = u^{2}, w = v^{2}$. Notice that a singularity of type $A_{1}$ arises after the base change. This implies again why we count 2 for its contribution to the intersection of $W$ with the boundary of $\overline{\mathcal M}_{g}$.  

An \emph{ad hoc} analysis for the local information around a degenerate cover has been studied intensively, cf. \cite[Thm. 2.15]{HM1}. Of course one can follow closely their method to derive our desired 
calculation. However, here we will take a slightly different approach, by passing to a finite cover of the Hurwitz space, over which a universal family stands. This kind of calculation is less cumbersome, and can 
be generalized to more complicated ramification types, e.g., cf. \cite[2.1]{C} for an application. 

Since the slope as a quotient is invariant under a finite base change, we make a degree $n$ base change $\widetilde{W}\rightarrow W$ for a big enough $n$ to realize the desired  
universal covering: 
 $$\xymatrix{
\widetilde{T} \ar[rr]^{\phi} \ar[dr] & & \widetilde{S} \ar[dl] \\
& \widetilde{W} &  }$$

We want to show that 
$$ \widetilde{W}\ldotp \lambda = nk \Big( \frac{N_{0} + N_{1}}{4} + \frac{N_{3}}{36} \Big), $$
from which the proposition follows right away. 

We use the well-known relation $12\lambda = \delta_{\widetilde{T}} + \kappa$ for the family $\widetilde{T}\rightarrow \widetilde{W}$. 
$ \delta_{\widetilde{T}}\ldotp \widetilde{T}$ counts the total number of nodes in the singular fibers of $\widetilde{T}$. By enumerating the nodes of 
those degenerate admissible covers, we have 
$$ \delta_{\widetilde{T}}\ldotp \widetilde{T} = n k \big( d(N_{0} + N_{1}) + (d-4 + \frac{1}{2} + \frac{1}{2})N_{2} + (d-3 + \frac{1}{3})N_{3} \big) = nk\big(dN - 3N_{2} - \frac{8}{3}N_{3}\big). $$
Note that if locally around a node the map is given by $(u, v) \rightarrow (u^{m}, v^{m})$, then we count it with weight $\frac{1}{m}$ since $m$ nearby sheets in $\widetilde{W}$ corresponding to smooth covers 
would approach together. 

For $\kappa\ldotp \widetilde{W} = \omega^{2}_{\widetilde{T}/\widetilde{W}}$ where $\omega$ denotes the first Chern class of the relative dualizing sheaf, we will use the branched cover 
$\widetilde{T}\rightarrow \widetilde{S}$. The calculation eventually boils down to a calculation on Bl($E\times X$). 

By Riemann-Hurwitz, $\omega_{\widetilde{T}/\widetilde{W}} = \phi^{*}\omega_{\widetilde{S}/\widetilde{W}} + \sum\limits_{i=1}^{2g-2}D_{i}$, where 
$D_{i}$ is the ramification section on $\widetilde{T}$ whose image on $\widetilde{S}$ corresponds to the pull back of $\widetilde{\Gamma}_{i}$. Hence, 
$$\omega^{2}_{\widetilde{T}/\widetilde{W}} = (\phi^{*}\omega_{\widetilde{S}/\widetilde{W}})^{2} + 2 \big( \sum\limits_{i=1}^{2g-2}\phi^{*}\omega_{\widetilde{S}/\widetilde{W}}\ldotp D_{i}\big) +\big(\sum\limits_{i=1}^{2g-2}D_{i}\big)^{2}. $$

We have
$$(\phi^{*}\omega_{\widetilde{S}/\widetilde{W}})^{2} = d (\omega_{\widetilde{S}/\widetilde{W}})^{2} = n d (\omega_{S/W})^2 = n d N (\omega_{Bl(E\times X)/X})^{2} = - k n d N. $$

Moreover, 
$$\phi^{*}\omega_{\widetilde{S}/\widetilde{W}}\ldotp D_{i} = \omega_{\widetilde{S}/\widetilde{W}}\ldotp (\phi_{*}D_{i}) = - n N \widetilde{\Gamma}_{i}^{2} = - n N (\Gamma_{i}^{2}-k_{i}), $$
and 
$$ D_{i}^{2} = \frac{1}{2} n N \widetilde{\Gamma}_{i}^{2} = \frac{1}{2} n N(\Gamma_{i}^{2} - k_{i}), \ D_{i}\ldotp D_{j} = 0, \ i\neq j.$$ 

Therefore, $$ \omega^{2}_{\widetilde{T}/\widetilde{W}} = - kndN + 2nN\Big(\sum\limits_{i=1}^{2g-2}(k_{i} - \Gamma_{i}^{2}) \Big)+ \frac{1}{2} nN \Big(\sum\limits_{i=1}^{2g-2}(\Gamma_{i}^{2}-k_{i})\Big) $$
 $$ = n N \Big( - kd + 3k - \frac{3}{2}\sum\limits_{i=1}^{2g-2}\Gamma_{i}^{2}\Big) = n N k (3-d),$$ 
 since $\Gamma_{i}^{2} = 0 $ on $E\times X$. 
 
 Plugging the above into the relation $\widetilde{W}\ldotp \lambda = \frac{1}{12}(\widetilde{W}\ldotp \delta_{\widetilde{T}}+\widetilde{W}\ldotp\kappa), $ we get the desired equality. 
\end{proof}

Now Theorem \ref{slope} follows immediately from Proposition \ref{Delta} and \ref{Lambda}. 

\begin{remark}
From the slope formula in Theorem \ref{slope}, we notice that $s(W)$ is independent of the base $X$ and the sections. Actually this is not surprising. The union of $2g-2$ marked points corresponds to a degree $2g-2$ divisor of $E$. Let us forget the order among the $2g-2$ sections for the time being. Then $X$ can be regarded as an effective curve in $E_{2g-2}$, the symmetric product of $E$. Note that $E_{2g-2}$ is a projective bundle over Pic($E) = E$. In particular, its cone of effective curves is generated by \emph{two} classes. Take one curve class by moving a divisor on $E$ using the translation of $E$. The one-parameter family of its pre-image in the Hurwitz space maps trivially to a point in $\overline{\mathcal M}_{g}$, since all the covering curves are isomorphic in this family. Therefore, the slope does not depend on which base $X$ or sections we choose. In fact, in the next subsection we will focus on the 
most genuine model when $X = E$ along with $2g-3$ horizontal sections and the diagonal of $E\times E$ to study the monodromy actions. 
\end{remark}

\begin{remark}
Even when $W$ is reducible, the slope formula can be applied to each component of $W$. A component of $W$ corresponds to a subset of $Cov/\sim$ invariant under 
the monodromy actions, cf. next subsection. We can decompose this subset exactly as what we did for $Cov$ and re-enumerate those $N_{i}$'s. Based on the above remark, 
we suspect that each component of $W$ should also have the same slope. 
\end{remark}

\subsection{Monodromy}\ 

In this section we will prove Theorem \ref{monodromy} about the monodromy of $W(E)\rightarrow E$. Recall that 
$W(E)$ is the space of degree $d$ genus $g$ admissible covers of $E$ simply branched at $2g-2$ points $p_{1}, \ldots, p_{2g-2}$, where the first $2g-3$ points are fixed and the last one is moving on $E$. For a general base point $b\in E$ and 
the paths $\alpha, \beta, \eta_{1}, \ldots, \eta_{2g-3}$ that generate $\pi_{1}(E_{b}, p_{1}, \ldots, p_{2g-3})$, cf. 
Figure \ref{eta}, we want to study the monodromy actions $g_{\alpha}, g_{\beta}, g_{1}, \ldots g_{2g-3}$ associated to 
these paths. Let $(\alpha, \beta, \gamma_{1}, \ldots, \gamma_{2g-2})\in Cov$ correspond to a cover in $W(E)$ over $b$. 

\begin{proposition}
\label{mi}
Going along $\eta_{i}$ once, $1\leq i \leq 2g-3$, the monodromy action $g_{i}$ acts on $(\alpha, \beta, \gamma_{1}, \ldots, \gamma_{2g-2})\in Cov$ as follows: 
$$g_{i}(\alpha) = \alpha, \ g_{i}(\beta) = \beta, \ g_{i} (\gamma_{j}) = \gamma_{j} \ \mbox{for}\  j < i, $$ 
$$g_{i}(\gamma_{j}) = \gamma_{2g-2}^{-1}\gamma_{j}\gamma_{2g-2}  \ \mbox{for}\  i\leq j \leq 2g-3, $$
$$g_{i}(\gamma_{2g-2}) = (\gamma_{i}\cdots \gamma_{2g-2})^{-1}\gamma_{2g-2}(\gamma_{i}\cdots \gamma_{2g-2}).$$ 
\end{proposition}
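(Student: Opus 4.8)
The plan is to track how the generators $\alpha, \beta, \gamma_1, \ldots, \gamma_{2g-2}$ of the fundamental group $\pi_1(E_b, p_1, \ldots, p_{2g-2})$ transform as the moving marked point $p_{2g-2}$ travels once around the loop $\eta_i$, which encircles $p_i, \ldots, p_{2g-3}$. The underlying principle is that dragging a branch point along a loop induces a self-homeomorphism of the punctured surface fixing the base point $b$, and the monodromy action $g_i$ on $Cov/\!\sim$ is precisely the action of this homeomorphism on the standard generating paths, read through the monodromy representation into $S_d$. So the computation is purely topological: I must determine the new homotopy classes of the standard generators after $p_{2g-2}$ has been transported around $\eta_i$, expressed in terms of the old generators.

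**The key steps.**

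First I would fix conventions for the generating loops as in Figure~\ref{eta} and Figure~\ref{E}, in particular the cyclic ordering of the $\gamma_j$ around the base point and the relation $\beta^{-1}\alpha^{-1}\beta\alpha = \gamma_1 \cdots \gamma_{2g-2}$. Second, I would observe that moving $p_{2g-2}$ around $\eta_i$ is homotopic to dragging it so that it circles the cluster $\{p_i, \ldots, p_{2g-3}\}$; since $\eta_i$ does not enclose $p_1, \ldots, p_{i-1}$, nor does it interact with the loops $\alpha, \beta$ except through the boundary-word relation, I expect $g_i(\alpha) = \alpha$, $g_i(\beta) = \beta$, and $g_i(\gamma_j) = \gamma_j$ for $j < i$ immediately. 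Third, for the loops $\gamma_j$ with $i \leq j \leq 2g-3$, the moving point $p_{2g-2}$ sweeps past each $p_j$, and a standard half-twist/point-pushing argument shows each such $\gamma_j$ gets conjugated by the loop $\gamma_{2g-2}$ around the moving point, giving $g_i(\gamma_j) = \gamma_{2g-2}^{-1}\gamma_j \gamma_{2g-2}$. Fourth, and most delicately, I would compute $g_i(\gamma_{2g-2})$: after $p_{2g-2}$ returns to its original position, its own small encircling loop must be conjugated by the total path it was pushed along, namely the product $\gamma_i \cdots \gamma_{2g-2}$, yielding $g_i(\gamma_{2g-2}) = (\gamma_i \cdots \gamma_{2g-2})^{-1}\gamma_{2g-2}(\gamma_i \cdots \gamma_{2g-2})$.

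**Consistency check and the main obstacle.**

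Finally, I would verify that the prescribed action preserves the defining relation $\beta^{-1}\alpha^{-1}\beta\alpha = \gamma_1 \cdots \gamma_{2g-2}$, so that $g_i$ genuinely maps $Cov$ to $Cov$; since $g_i$ fixes $\alpha, \beta$ it suffices to check that $g_i(\gamma_1 \cdots \gamma_{2g-2}) = \gamma_1 \cdots \gamma_{2g-2}$, which follows from a telescoping cancellation once the individual formulas are substituted. I would also confirm well-definedness on equivalence classes, which is automatic because every formula is expressed through conjugation and group multiplication, hence commutes with the global conjugation by $\tau \in S_d$. The main obstacle is the correct bookkeeping of the point-pushing picture, that is, getting the orientation and the order of the conjugating factors right for the hardest line $g_i(\gamma_{2g-2})$; here the precise geometry of how $p_{2g-2}$ threads through the cluster $\{p_i, \ldots, p_{2g-3}\}$ and comes back determines the conjugating word, and it is easy to produce an inverse or a reversed product. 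I expect this is where an explicit picture (as the paper supplies) is essential, and the consistency check with the boundary relation serves as the decisive sanity test that the conjugating words are the right ones.
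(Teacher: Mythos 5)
Your proposal is correct and follows essentially the same route as the paper: the paper's proof likewise interprets $g_i$ as dragging $p_{2g-2}$ once around the cluster $\{p_i,\ldots,p_{2g-3}\}$ while holding the other generators in a relatively fixed position, and reads the resulting conjugation formulas off the deformed loops in a figure. Your added telescoping verification that $g_i(\gamma_1\cdots\gamma_{2g-2})=\gamma_1\cdots\gamma_{2g-2}$ is a sound sanity check (the paper records this preservation of the boundary relation only after stating the theorem), but it does not change the substance of the argument.
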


\begin{proof}
Since $p_{2g-2}$ is the moving branch point corresponding to the diagonal section of $E\times E$, over $p_{i}$ on the base $E$, 
the fiber has an intersection point of $\Gamma_{i}$ with $\Gamma_{2g-2}$. Going along the path $\eta_{i}$ once, it amounts to 
circling $p_{2g-2}$ around $p_{i}, \ldots, p_{2g-3}$ once while keeping all the paths $\alpha, \beta, \gamma_{j}$'s in a \emph{relatively fixed} position. 
Figure \ref{gi} illustrates what happens when $p_{2g-2}$ moves along the dashed circle homotopic to $\eta_{i}$.  
\begin{figure}[H]
    \centering
    \psfrag{p1}{$p_{1}$}
    \psfrag{pi1}{$p_{i-1}$}
    \psfrag{pi}{$p_{i}$}
    \psfrag{pg3}{$p_{2g-3}$}
    \psfrag{pg2}{$p_{2g-2}$}
    \psfrag{b}{$b$}
     \psfrag{r1}{$\gamma_{1}$}
     \psfrag{ri1}{$\gamma_{i-1}$}
     \psfrag{ri}{$\gamma_{i}$}
    \psfrag{rg3}{$\gamma_{2g-3}$}
    \psfrag{rg2}{$\gamma_{2g-2}$}
    \psfrag{s1}{$\gamma'_{1}$}
    \psfrag{si1}{$\gamma'_{i-1}$}
    \psfrag{si}{$\gamma'_{i}$}
    \psfrag{sg3}{$\gamma'_{2g-3}$}
    \psfrag{sg2}{$\gamma'_{2g-2}$}
    \psfrag{gi}{$g_{i}$}
    \psfrag{a}{$\alpha$}
    \psfrag{be}{$\beta$}
    \psfrag{a1}{$\alpha'$}
    \psfrag{b1}{$\beta'$}
    \psfrag{eta}{$\eta_{i}$}
    \includegraphics[scale=0.5]{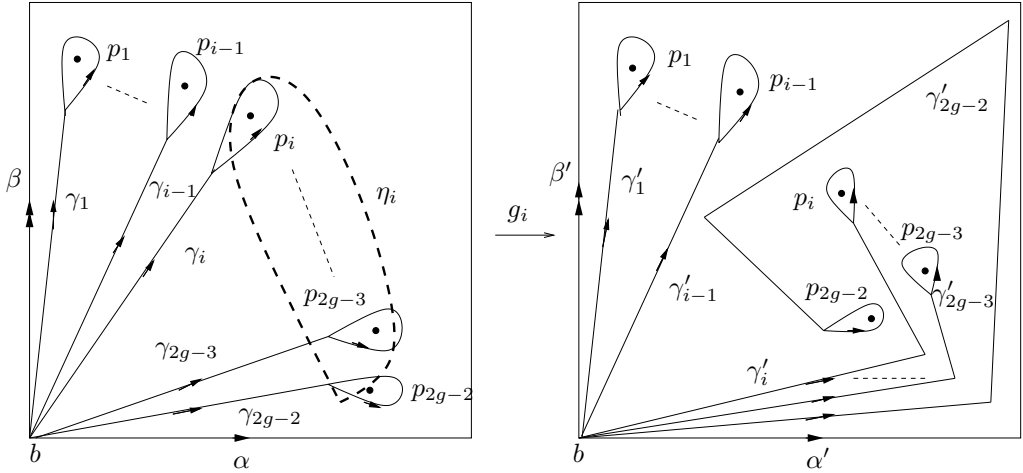}
    \caption{\label{gi} The action of $g_{i}$}
\end{figure}
Note that in $\pi_{1}(E_{b}, p_{1}, \ldots, p_{2g-3}),$ we have the following relations to express the new paths by the original paths: 
$$\gamma'_{j} \sim \gamma_{j},\  j<i, $$
$$\gamma'_{j} \sim  \gamma_{2g-2}^{-1}\gamma_{j}\gamma_{2g-2}, \ i\leq j \leq 2g-3, $$
$$\gamma'_{2g-2} \sim (\gamma_{i}\cdots \gamma_{2g-2})^{-1}\gamma_{2g-2}(\gamma_{i}\cdots \gamma_{2g-2}).$$ 
Moreover, $\alpha'\sim \alpha$ and $\beta'\sim\beta$ do not change under $g_{i}$. These relations are exactly the same as in 
the above proposition. 
\end{proof}

\begin{proposition}
\label{ma}
Going along $\alpha$ once, the monodromy action $g_{\alpha}$ acts on 
$(\alpha, \beta, \gamma_{1}, \ldots, \gamma_{2g-2})\in Cov$ as follows: 
$$ g_{\alpha} (\alpha) = \alpha, \ g_{\alpha}(\beta) = \beta\gamma_{2g-2}, \ g_{\alpha}(\gamma_{2g-2}) = \alpha^{-1}\gamma_{2g-2}\alpha,$$ 
$$g_{\alpha}(\gamma_{j}) = \gamma_{2g-2}^{-1}\gamma_{j}\gamma_{2g-2}\  \mbox{for}\  j\leq 2g-3.$$
\end{proposition}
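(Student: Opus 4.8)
The plan is to realize $g_\alpha$ as the point-pushing map that drags the moving branch point $p_{2g-2}$ (the diagonal section of $E\times E$) once around a loop on the base $E$ homotopic to the generator $\alpha$, and then to read off its effect on the generators of $\pi_1(E_b, p_1, \ldots, p_{2g-3})$ from an explicit isotopy picture, in exactly the same spirit as the proof of Proposition \ref{mi}. Concretely, I would draw $E$ as the standard square with $\alpha$ and $\beta$ the two sides through the base point $b$, place the fixed punctures $p_1, \ldots, p_{2g-3}$ together with the loops $\gamma_1, \ldots, \gamma_{2g-3}$, and let $p_{2g-2}$ travel along the $\alpha$-direction, carrying its small loop $\gamma_{2g-2}$ with it. When $p_{2g-2}$ returns to its original position, each original generator is carried to a new loop, and the task is to express these new loops $\alpha'$, $\beta'$, $\gamma_j'$ as words in the old ones.

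The new geometric feature compared with Proposition \ref{mi} is that $\alpha$ is a homologically nontrivial cycle on the torus, so pushing $p_{2g-2}$ along it forces the puncture to cross the $\beta$-loop once, since $\alpha$ and $\beta$ meet transversely in a single point. This crossing inserts one copy of $\gamma_{2g-2}$ into $\beta$, producing $g_\alpha(\beta) = \beta\gamma_{2g-2}$, while $\alpha$ is carried parallel to itself and so $g_\alpha(\alpha) = \alpha$. The loop $\gamma_{2g-2}$ itself is dragged around the $\alpha$-handle and comes back conjugated, giving $g_\alpha(\gamma_{2g-2}) = \alpha^{-1}\gamma_{2g-2}\alpha$; and the small loops around the fixed punctures are swept past the returning point, each acquiring the conjugation $g_\alpha(\gamma_j) = \gamma_{2g-2}^{-1}\gamma_j\gamma_{2g-2}$ for $j \leq 2g-3$. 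I would record all of these as homotopy relations underneath the figure, just as in the display in the proof of Proposition \ref{mi}.

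The main obstacle is pinning down the exact conjugating words, together with the correct sides and orientations, in the relations $g_\alpha(\beta)=\beta\gamma_{2g-2}$ and $g_\alpha(\gamma_{2g-2})=\alpha^{-1}\gamma_{2g-2}\alpha$: an error in the intersection sign of $\alpha$ with $\beta$, or in the direction in which $p_{2g-2}$ sweeps past the $\gamma_j$, would flip an inverse or move a factor to the wrong side. To control this and to confirm the answer, I would use the fundamental relation as a consistency check. Substituting the proposed formulas and using $\gamma_1\cdots\gamma_{2g-3}\gamma_{2g-2} = \beta^{-1}\alpha^{-1}\beta\alpha$, one computes that both $g_\alpha(\beta)^{-1}g_\alpha(\alpha)^{-1}g_\alpha(\beta)g_\alpha(\alpha)$ and $g_\alpha(\gamma_1)\cdots g_\alpha(\gamma_{2g-2})$ reduce to $\gamma_{2g-2}^{-1}\beta^{-1}\alpha^{-1}\beta\gamma_{2g-2}\alpha$, so the action is a well-defined automorphism preserving the defining relation of $Cov$; this both validates the picture and shows that the formulas descend to $Cov/\sim$.
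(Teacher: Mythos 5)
Your proposal matches the paper's proof: the paper likewise establishes the formulas by drawing the isotopy of the generating paths as $p_{2g-2}$ is pushed once along a loop homotopic to $\alpha$ (Figure \ref{ga}) and reading off the relations $\alpha'\sim\alpha$, $\beta'\sim\beta\gamma_{2g-2}$, $\gamma_{2g-2}'\sim\alpha^{-1}\gamma_{2g-2}\alpha$, $\gamma_j'\sim\gamma_{2g-2}^{-1}\gamma_j\gamma_{2g-2}$. Your consistency check that both sides of $\beta^{-1}\alpha^{-1}\beta\alpha=\gamma_1\cdots\gamma_{2g-2}$ transform to $\gamma_{2g-2}^{-1}\beta^{-1}\alpha^{-1}\beta\gamma_{2g-2}\alpha$ is correct and is exactly the verification the paper records as a remark after Theorem \ref{monodromy}.
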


\begin{proof}
Again we draw what happens to those paths as $p_{2g-2}$ moves along the dashed path homotopic to $\alpha$, cf. Figure \ref{ga}. 
\begin{figure}[h]
    \centering
    \psfrag{p1}{$p_{1}$}
    \psfrag{p2}{$p_{2}$}
    \psfrag{pg}{$p_{2g-2}$}
    \psfrag{b}{$b$}
     \psfrag{r1}{$\gamma_{1}$}
     \psfrag{r2}{$\gamma_{2}$}
     \psfrag{rg}{$\gamma_{2g-2}$}
    \psfrag{s1}{$\gamma'_{1}$}
    \psfrag{s2}{$\gamma'_{2}$}
    \psfrag{sg}{$\gamma'_{2g-2}$}
    \psfrag{ga}{$g_{\alpha}$}
    \psfrag{a}{$\alpha$}
    \psfrag{be}{$\beta$}
    \psfrag{a1}{$\alpha'$}
    \psfrag{b1}{$\beta'$}
    \includegraphics[scale=0.6]{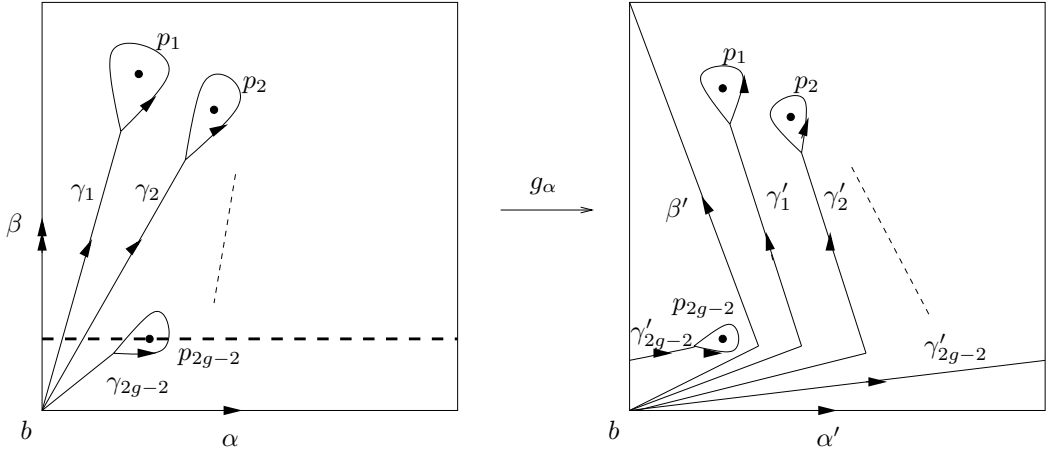}
    \caption{\label{ga} The action of $g_{\alpha}$}
\end{figure}
We have the following relations to express the new paths by the original paths: 
$$ \alpha' \sim\alpha, \ \beta' \sim \beta\gamma_{2g-2}, \ \gamma'_{2g-2} \sim \alpha^{-1}\gamma_{2g-2}\alpha,$$ 
$$\gamma'_{j} \sim \gamma_{2g-2}^{-1}\gamma_{j}\gamma_{2g-2}, \  j\leq 2g-3. $$

These relations imply the above proposition. 
\end{proof}

\begin{proposition}
\label{mb}
Going along $\beta$ once, the monodromy action $g_{\beta}$ acts on 
$(\alpha, \beta, \gamma_{1}, \ldots, \gamma_{2g-2})\in Cov$ as follows: 
$$g_{\beta}(\alpha) = \alpha\gamma_{2g-2}^{-1}, \ g_{\beta}(\beta) = \beta, \ g_{\beta}(\gamma_{j}) = \gamma_{j} \ \mbox{for} \ j\leq 2g-3, $$ 
$$g_{\beta}(\gamma_{2g-2}) = (\beta\gamma_{1}\cdots\gamma_{2g-3})^{-1}\gamma_{2g-2} (\beta\gamma_{1}\cdots\gamma_{2g-3}). $$
\end{proposition}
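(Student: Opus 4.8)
The plan is to follow the same figure-tracking method used in the proofs of Propositions \ref{mi} and \ref{ma}. The monodromy action $g_{\beta}$ records how the chosen generators of $\pi_{1}(E_{b}, p_{1}, \ldots, p_{2g-3})$ --- and with them the distinguished loop $\gamma_{2g-2}$ around the moving point --- deform as $p_{2g-2}$ is dragged once around a path homotopic to $\beta$ and returned to its starting position. First I would draw the picture analogous to Figures \ref{gi} and \ref{ga}: the base point $b$, the stationary points $p_{1}, \ldots, p_{2g-3}$ with their small loops $\gamma_{1}, \ldots, \gamma_{2g-3}$, the standard generators $\alpha, \beta$, and the dashed path along which $p_{2g-2}$ travels. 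Comparing the new loops $\alpha', \beta', \gamma'_{j}, \gamma'_{2g-2}$ with the old ones then yields the homotopy relations in the displayed statement, from which the proposition follows exactly as in the two previous proofs.

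Concretely, I expect the picture to exhibit the following behavior. Since $p_{2g-2}$ moves parallel to $\beta$ and never crosses it, the loop $\beta$ is unaffected, giving $g_{\beta}(\beta) = \beta$; and the picture should show that the small loops $\gamma_{j}$ around the fixed points $p_{j}$, $j\leq 2g-3$, return to themselves, so $g_{\beta}(\gamma_{j}) = \gamma_{j}$. As $p_{2g-2}$ crosses the arc $\alpha$ in the course of going around $\beta$, the loop $\alpha$ picks up a single factor of $\gamma_{2g-2}^{-1}$ --- the mirror of the effect $g_{\alpha}(\beta) = \beta\gamma_{2g-2}$ in Proposition \ref{ma}, with the inverse arising from the opposite intersection sign of $\beta$ with $\alpha$ --- so $g_{\beta}(\alpha) = \alpha\gamma_{2g-2}^{-1}$. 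Finally, when $p_{2g-2}$ comes back to where it started, the loop $\gamma_{2g-2}$ is carried around by the transport path, and reading off this path in terms of the original generators should identify it as $\beta\gamma_{1}\cdots\gamma_{2g-3}$, yielding $g_{\beta}(\gamma_{2g-2}) = (\beta\gamma_{1}\cdots\gamma_{2g-3})^{-1}\gamma_{2g-2}(\beta\gamma_{1}\cdots\gamma_{2g-3})$.

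The step I expect to be most delicate is pinning down the exact conjugating word $\beta\gamma_{1}\cdots\gamma_{2g-3}$ for $\gamma_{2g-2}$: one must be careful about the order in which the transport path passes the fixed points and crosses $\beta$, as well as about orientation conventions, since a sign or ordering error here propagates into the whole formula. As a safeguard that the figure has been read correctly, I would check that the proposed action preserves the defining relation $\beta^{-1}\alpha^{-1}\beta\alpha = \gamma_{1}\cdots\gamma_{2g-2}$. Substituting the formulas and using the relation in the form $\alpha^{-1}\beta\alpha = \beta\gamma_{1}\cdots\gamma_{2g-2}$, one finds that both $g_{\beta}(\beta)^{-1}g_{\beta}(\alpha)^{-1}g_{\beta}(\beta)g_{\beta}(\alpha)$ and $g_{\beta}(\gamma_{1})\cdots g_{\beta}(\gamma_{2g-2})$ reduce to $\beta^{-1}\gamma_{2g-2}\beta\gamma_{1}\cdots\gamma_{2g-3}$. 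This confirms that the action is well-defined on $Cov/\!\sim$ and that the bookkeeping in the figure is consistent.
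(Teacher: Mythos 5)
Your proposal follows essentially the same route as the paper: the paper's proof consists precisely of drawing the deformation of the paths as $p_{2g-2}$ traverses the dashed loop homotopic to $\beta$ (its Figure \ref{gb}) and reading off the stated homotopy relations $\alpha'\sim\alpha\gamma_{2g-2}^{-1}$, $\beta'\sim\beta$, $\gamma_j'\sim\gamma_j$, $\gamma_{2g-2}'\sim(\beta\gamma_1\cdots\gamma_{2g-3})^{-1}\gamma_{2g-2}(\beta\gamma_1\cdots\gamma_{2g-3})$. Your closing consistency check that both sides of the braid relation reduce to $\beta^{-1}\gamma_{2g-2}\beta\gamma_1\cdots\gamma_{2g-3}$ is correct and matches the remark the paper makes after Theorem \ref{monodromy}, so it is a sound (if redundant) safeguard rather than a new argument.
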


\begin{proof}
This time the action changes the paths as in Figure \ref{gb} when $p_{2g-2}$ moves along the dashed path homotopic to $\beta$. 
\begin{figure}[h]
    \centering
    \psfrag{p1}{$p_{1}$}
    \psfrag{p2}{$p_{2}$}
    \psfrag{pg}{$p_{2g-2}$}
    \psfrag{b}{$b$}
     \psfrag{r1}{$\gamma_{1}$}
     \psfrag{r2}{$\gamma_{2}$}
     \psfrag{rg}{$\gamma_{2g-2}$}
    \psfrag{s1}{$\gamma'_{1}$}
    \psfrag{s2}{$\gamma'_{2}$}
    \psfrag{sg}{$\gamma'_{2g-2}$}
    \psfrag{gb}{$g_{\beta}$}
    \psfrag{a}{$\alpha$}
    \psfrag{be}{$\beta$}
    \psfrag{a1}{$\alpha'$}
    \psfrag{b1}{$\beta'$}
    \includegraphics[scale=0.5]{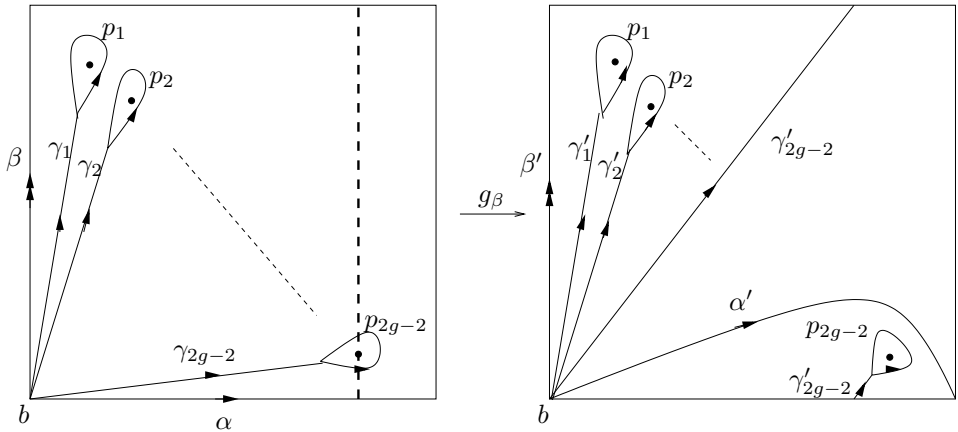}
    \caption{\label{gb} The action of $g_{\beta}$}
\end{figure}
We have the following relations to express the new paths by the original paths: 
$$\alpha' \sim \alpha\gamma_{2g-2}^{-1}, \ \beta' \sim \beta, \ \gamma'_{j} \sim \gamma_{j}, \ j\leq 2g-3, $$ 
$$\gamma'_{2g-2} \sim (\beta\gamma_{1}\cdots\gamma_{2g-3})^{-1}\gamma_{2g-2} (\beta\gamma_{1}\cdots\gamma_{2g-3}), $$
which implies the proposition immediately. 
\end{proof}

Now Theorem \ref{monodromy} is simply the combination of Proposition \ref{mi}, \ref{ma} and \ref{mb}.

\subsection{Density}\ 

Recall that we want to produce sufficiently many curves in \Mg such that an effective divisor cannot contain all of them. 
One way is to construct moving curves whose deformations cover an open subset of $\overline{\mathcal M}_{g}$.  

The one-parameter family of covers $W_{d,g}$ may not be fully moving since its visible variation depends only on the $2g-2$ 
branch points on $E$. However, if we vary the degree $d$ of the covers, the union of the countably many 
$W_{d,g}$ is Zariski dense in $\overline{\mathcal M}_{g}$, as stated in Theorem \ref{density}. The proof of Theorem \ref{density} is essentially the same as \cite[2.3]{C}. We will briefly mention the clue of the proof. One can refer to \cite[2.3]{C} for more details.

The upshot is that a genus $g$ simply branched covering curve of a standard torus can be regarded as certain \emph{lattice point} in the Hodge bundle. So the union of these covers is Zariski dense there. 

More precisely, consider $\Bbb{H}$ over $\mathcal M_{g}$ parameterizing a genus $g$ smooth curve $C$ along with a 
holomorphic 1-form $\omega$ with simple zeros, i.e., $(\omega) = q_{1}+\cdots+q_{2g-2}$. $\Bbb{H}$ can be equipped with a local coordinate chart around $(C, \omega)$ as follows. 

Take a basis $\gamma_{1},\ldots,\gamma_{4g-3}\in
H_{1}(C,q_{1},\ldots,q_{2g-2};\mathbb Z)$ the relative homology of
$C$ with $2g-2$ punctures, such that
$\gamma_{1},\ldots,\gamma_{2g}$ are the standard symplectic basis
of $H_{1}(C;\mathbb Z)$ and $\gamma_{2g+i}$ is a path connecting
$q_{1}$ and $q_{i+1}$, $i=1,\ldots, 2g-3$. The period map
$\Phi:(C,\omega)\rightarrow \mathbb C^{4g-3}$ is given by
$$ \Phi
(C,\omega)=\Big(\int_{\gamma_{1}}\omega,\ldots,\int_{\gamma_{4g-3}}\omega\Big),$$
which provides a local coordinate chart for $\Bbb H$. This was noticed by Kontsevich \cite{Ko}.  

Now take the standard lattice $\Lambda$ on $\mathbb C$ generated by $\langle1, \sqrt{-1}\rangle$, and the torus $T$ given by 
$\mathbb C/\Lambda$. Consider the coordinate $\Phi(C,\omega)=(\phi_{1},\ldots, \phi_{4g-3})\in \mathbb C^{4g-3}$. We
cite the following lemma from \cite[Lemma 3.1]{EO}.
\begin{lemma}
$\phi_{i}\in \Lambda, i=1,\ldots, 2g$ if and only if the following conditions 
hold: \\
(1) there exists a holomorphic map $f: C\rightarrow T$; \\ (2)
$\omega = f^{-1}(dz)$; \\ (3) $f$ is simply ramified at $q_{i}$, $i=1,\ldots, 2g-2$; \\ (4)
$f(q_{i+1})-f(q_{1})=\phi_{2g+i}$ mod $\Lambda$, $i=1,\ldots, 2g-3$; \\
(5) the degree of $f$ equals $\frac{\sqrt{-1}}{2} \int_{C}\omega \wedge \overline{\omega}. $
\end{lemma}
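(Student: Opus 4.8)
The plan is to prove both implications through a single object: the integration map $f\colon C\to T$ furnished by $\omega$, together with the observation that integrality of the $2g$ \emph{absolute} periods $\phi_1,\dots,\phi_{2g}$ is exactly the condition making this map well defined modulo $\Lambda$. Since $dz$ is the translation-invariant holomorphic $1$-form on $T=\mathbb C/\Lambda$ whose periods generate $\Lambda$, the relation $\omega=f^*(dz)$ will be the bridge tying the arithmetic of the $\phi_i$ to the lattice.

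For the easy direction, suppose (1)--(5) hold, so there is a holomorphic $f\colon C\to T$ with $\omega=f^*(dz)$. For each $i=1,\dots,2g$ the class $\gamma_i$ is a genuine $1$-cycle, and by naturality of integration
\[
\phi_i=\int_{\gamma_i}\omega=\int_{\gamma_i}f^*(dz)=\int_{f_*\gamma_i}dz.
\]
Since $f_*\gamma_i\in H_1(T;\mathbb Z)$ and every period of $dz$ over an integral cycle of $T$ lies in $\Lambda$, we conclude $\phi_i\in\Lambda$.

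Conversely, assume $\phi_1,\dots,\phi_{2g}\in\Lambda$. Because $\gamma_1,\dots,\gamma_{2g}$ is an integral basis of $H_1(C;\mathbb Z)$, every absolute period of $\omega$ is a $\mathbb Z$-linear combination of the $\phi_i$ and hence lies in $\Lambda$. Therefore the assignment $f(p)=\int_{q_1}^{p}\omega\bmod\Lambda$ is independent of the chosen path (two paths differ by an absolute period) and defines a holomorphic map $f\colon C\to T$; locally $z\circ f$ is a primitive of $\omega$, so $f^*(dz)=\omega$, giving (1) and (2). Condition (4) is then immediate: $f(q_1)=0$ and $f(q_{i+1})=\int_{\gamma_{2g+i}}\omega=\phi_{2g+i}\bmod\Lambda$, since $\gamma_{2g+i}$ joins $q_1$ to $q_{i+1}$ and any competing path differs by a cycle, i.e. by an element of $\Lambda$. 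As $\omega\not\equiv 0$, the map $f$ is nonconstant, hence surjective with a well-defined degree.

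The remaining content, and the only places needing genuine care, are (3) and (5). For (3), since $dz$ is nowhere zero on $T$, the pullback relation $\omega=f^*(dz)$ forces the vanishing order of $\omega$ at a point $p$ to equal $e_p-1$, where $e_p$ is the local ramification index of $f$; the hypothesis $(\omega)=q_1+\cdots+q_{2g-2}$ says each zero is simple, so $e_{q_i}=2$ and $f$ is simply ramified exactly at the $q_i$. For (5), pull back the flat area form: with $\Lambda=\langle 1,\sqrt{-1}\rangle$ the torus has unit area, $\int_T \tfrac{\sqrt{-1}}{2}\,dz\wedge d\bar z=1$, whence
\[
\deg f=\int_C f^*\!\Big(\tfrac{\sqrt{-1}}{2}\,dz\wedge d\bar z\Big)=\tfrac{\sqrt{-1}}{2}\int_C\omega\wedge\bar\omega,
\]
using $f^*(dz)=\omega$ and $f^*(d\bar z)=\bar\omega$. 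I expect the main obstacle to be bookkeeping rather than conceptual: keeping the normalization of $\Lambda$ straight in (5) and carrying out the local ramification computation in (3). The genuine crux of the equivalence---well-definedness of the integration map modulo $\Lambda$---is settled the moment one invokes that $\gamma_1,\dots,\gamma_{2g}$ is an integral basis of $H_1(C;\mathbb Z)$, so that integrality of the listed $2g$ periods propagates to all absolute periods.
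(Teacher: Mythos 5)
Your proof is correct. The paper does not prove this lemma at all --- it is quoted verbatim from Eskin--Okounkov \cite[Lemma 3.1]{EO} --- so there is no in-paper argument to compare against; your construction of $f$ as the integration map $p\mapsto\int_{q_1}^{p}\omega \bmod \Lambda$, made well defined by integrality of the absolute periods over the basis $\gamma_{1},\dots,\gamma_{2g}$ of $H_{1}(C;\mathbb Z)$, is exactly the standard argument behind that citation, and your handling of (3) via vanishing orders and of (5) via the unit-area form $\tfrac{\sqrt{-1}}{2}\,dz\wedge d\bar z$ is sound.
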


Then the corresponding locus of $\bigcup\limits_{d}^{\infty} W_{d,g}$ is Zariski dense in $\Bbb H$ and Theorem \ref{density} follows. 
 
\section{Applications}
In this section, we will apply the general theory about $W_{d,g}$ obtained previously to various examples. 

\subsection{The slope of $W_{d,2}$} \ 

In the case $g=2$, we want to work out those enumerative numbers $N_{i}$'s in Definition \ref{subsets} explicitly. The set $Cov$ in Definition \ref{Cov} is reduced to: 
$$Cov= \{ (\alpha, \beta, \gamma_{1}, \gamma_{2})\in S_{d}\times S_{d}\times S_{d} \times S_{d} \ |\ \beta^{-1}\alpha^{-1}\beta\alpha = \gamma_{1}\gamma_{2}, $$
$$\gamma_{1}, \gamma_{2}\ \mbox{are simple transpositions, and}\  \langle\alpha, \beta, \gamma_{1}, \gamma_{2}\rangle\ \mbox{acts transitively on}  \ \{1, \ldots, d\}\} . $$ 
Its subsets in Definition \ref{subsets} are reduced to: 
$$Cov_{0}=\{ (\alpha, \beta, \gamma_{1}, \gamma_{2}) \in Cov\ |\ \gamma_{1} = \gamma_{2},\  \langle\alpha, \beta\rangle\  \mbox{acts transitively on}\  \{1, \ldots, d\} \};$$
$$Cov^{(h)}_{1}= \{ (\alpha, \beta, \gamma_{1}, \gamma_{2}) \in Cov\ |\ \gamma_{1} = \gamma_{2}, \ \langle\alpha, \beta \rangle \ \mbox{acts transitively on a partition} \ (h\ |\ d-h) \};$$ 
$$Cov_{2}= \{ (\alpha, \beta, \gamma_{1}, \gamma_{2}) \in Cov\ |\  \gamma_{1} \cap \gamma_{2} = \emptyset \}; $$
$$Cov_{3}= \{ (\alpha, \beta, \gamma_{1}, \gamma_{2}) \in Cov\ |\   |\gamma_{1} \cap \gamma_{2}| = 1 \}. $$

For simplicity, we will only deal with the case when $d$ is \emph{odd}. The even case can be done similarly but with more subtle discussion. 

The enumeration below is similar to \cite[Section 3]{C}. To better state the result, introduce a notation 
$(1^{a_{1}}2^{a_{2}}\cdots d^{a_{d}})$ to denote a conjugacy class of $S_{d}$ that has $a_{i}$ cycles of length-$i$, where 
$a_{1} + 2a_{2} + \cdots da_{d} = d$. 

First, let us get rid of the equivalence relation $\sim$. We cite a simple result \cite[Lemma 3.1]{C} as follows. 

\begin{lemma} 
\label{tau}
For an element $\tau$ in $S_{d}$, if $\tau$ commutes with all the elements in a transitive subgroup of $S_{d}$, 
then $\tau$ must consist of $m$ cycles of the same length-$l$, $lm = d$. That is, $\tau$ is of type $(l^{m})$ by the above notation. 
\end{lemma}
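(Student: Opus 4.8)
The plan is to exploit the fact that commuting with every element of a transitive group forces the cycle structure of $\tau$ to be highly symmetric. Let $G\leq S_{d}$ be the transitive subgroup in the hypothesis, and suppose $\tau$ commutes with every $g\in G$, i.e. $g\tau g^{-1}=\tau$. First I would record the standard fact that conjugation acts on cycles by relabeling: if $(a_{1}\,a_{2}\,\cdots\,a_{k})$ is a cycle of $\tau$, then $g(a_{1}\,a_{2}\,\cdots\,a_{k})g^{-1}=(g(a_{1})\,g(a_{2})\,\cdots\,g(a_{k}))$. Since $g\tau g^{-1}=\tau$, this image is again a cycle of $\tau$, of the same length $k$. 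In other words, for each $g\in G$, conjugation by $g$ permutes the cycles of $\tau$ among themselves while preserving their lengths.

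The second step is to encode this as a $G$-invariant function on the underlying set. Define $\ell\colon\{1,\ldots,d\}\to\mathbb Z_{>0}$ by letting $\ell(i)$ be the length of the unique cycle of $\tau$ containing $i$. For any $g\in G$, the cycle of $\tau$ through $g(i)$ is exactly the $g$-image of the cycle through $i$, by the first step, so these two cycles have equal length; that is, $\ell(g(i))=\ell(i)$ for all $g\in G$ and all $i$. Thus $\ell$ is constant on each $G$-orbit of $\{1,\ldots,d\}$.

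The third step invokes transitivity: because $G$ acts transitively, there is a single orbit, so $\ell$ is globally constant, say $\ell\equiv l$. Hence every cycle of $\tau$ has the same length $l$; if there are $m$ of them, then $lm=d$ and $\tau$ is of type $(l^{m})$, as claimed. (The degenerate possibilities are automatically covered: $l=1$ gives $\tau=\mathrm{id}$ of type $(1^{d})$, and $l=d$ gives a single $d$-cycle.)

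The argument is short and I do not anticipate a genuine obstacle; the only point requiring care is the bookkeeping in the first two steps, namely verifying that conjugation by $G$ gives a \emph{well-defined, length-preserving} action on the set of cycles of $\tau$, so that the auxiliary function $\ell$ is honestly $G$-invariant rather than merely invariant up to reindexing. Once that is pinned down, transitivity closes the argument immediately. Since this is cited as \cite[Lemma 3.1]{C}, I would present the above as a self-contained three-line proof and refer to \cite{C} for the original statement.
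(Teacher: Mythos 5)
Your argument is correct and complete: conjugation by each $g$ in the transitive subgroup permutes the cycles of $\tau$ while preserving their lengths, so the cycle-length function $\ell$ is constant on the single orbit $\{1,\ldots,d\}$, forcing type $(l^{m})$. The paper itself gives no proof of this lemma (it is cited from \cite[Lemma 3.1]{C}), and your three-step argument is the standard one that proof would use, so there is nothing further to compare.
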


Furthermore, define the following sums. 

\begin{definition}
For positive integers $d$ and $i$, introduce a sum 
$$\sigma_{i}(d) = \sum\limits_{l|d} l^{i}, $$ where the summation ranges over all positive factors $l$ of $d$. 
\end{definition}

Let us figure out the number $N_{0} = |Cov_{0}/\sim|$ in the first place. Consider the conjugate action of $S_{d}$ acting on the set $Cov_{0}$. Each orbit corresponds to an element of $Cov_{0}/\sim$. Introduce a new set 
$$Cov_{0}(\tau) = \{(\alpha, \beta, \gamma_{1}, \gamma_{2}) \in Cov_{0}\ |\ \tau\  \mbox{commutes with}\ \alpha, \beta, \gamma_{1}, \gamma_{2} \} $$
parameterizing data in $Cov_{0}$ that have $\tau$ as a stabilizer under the conjugate action of $S_{d}$. By Burnside's lemma, we have
 $$N_{0} = \frac{1}{d!}\sum\limits_{\tau\in S_{d}} |Cov_{0}(\tau)|. $$ 
 
 Since $\gamma_{1} = \gamma_{2}$ are the same simple transposition $(t_{1} t_{2})$, $\tau$ commutes with $\gamma_{1}, \gamma_{2}$ if and only if either $\tau$ fixes both $t_{1}, t_{2}$ or 
 $\tau$ permutes them. Now by Lemma \ref{tau} and the assumption that $d$ is odd, $\tau$ cannot have a cycle of length 2. Therefore $\tau$ must be the identity. 
 Then we simply have 
 $$ N_{0} = \frac{1}{d!} |Cov_{0}|. $$ 
  
For an element $(\alpha, \beta, \gamma_{1}, \gamma_{2})\in Cov_{0}$, $\gamma_{1} = \gamma_{2}$ can have ${d\choose 2}$ options. 
Fix one choice, then $\alpha^{-1}\beta\alpha = \beta$. In this form it is clear that if $\beta$ has two cycles of different lengths, the letters in these two cycles would never be exchanged by 
$\alpha$, which contradicts the transitivity of $\langle \alpha, \beta\rangle$. So $\beta$ must be of type $(l^{a})$, where $la = d$. 
$\beta$ has $\frac{d!}{l^{a}a!}$ choices. For a fixed $\beta$, $\alpha$ has $(a-1)! l^{a}$ choices. In total, we get 
 $$ |Cov_{0}| = {d\choose 2} \sum\limits_{l|d} \frac{d!}{l}, \ \mbox{and}\  N_{0} = \frac{d-1}{2} \sigma_{1}(d). $$

Next, let us solve $N_{1}^{(h)}, 1\leq h \leq \frac{d-1}{2}$. By the same token, we have 
$$N_{1}^{(h)} =  \frac{1}{d!} |Cov_{1}^{(h)}|. $$ 

For an element $(\alpha, \beta, \gamma_{1}, \gamma_{2})\in Cov_{1}^{(h)}$, assume that $\gamma_{1} = \gamma_{2}$ are the simple transposition permuting two letters
$h$ and $d$. Also assume that $\langle \alpha, \beta \rangle$ acts transitively on both $\{1, \ldots, h\}$ and its complement $\{h+1, \ldots, d\}$. 
The choices involved in these two steps are ${d \choose h}h(d-h)$. For a fixed choice, we have $\alpha^{-1}\beta\alpha = \beta$. 
Again due to the transitivity of $\langle \alpha, \beta \rangle$, $\beta$ must be of type $(l_{1}^{a_{1}} l_{2}^{a_{2}})$, where $a_{1}l_{1} = h, a_{2}l_{2} = d-h$. The cycles of length-$l_{1}$ act on  
$\{1, \ldots, h\}$ and those of length-$l_{2}$ act on $\{h+1, \ldots, d\}$. Such a $\beta$ has choices $\frac{i!}{l_{1}^{a_{1}}a_{1}!}\frac{(d-i)!}{l_{2}^{a_{2}}a_{2}!}. $
For a fixed $\beta$, there are $(a_{1}-1)! l_{1}^{a_{1}}(a_{2}-1)! l_{2}^{a_{2}}$ choices for $\alpha$. Multiplying the numbers all together, we get 
$$ |Cov_{1}^{(h)}| = d!\Big( \sum\limits_{a_{1}l_{1} = h, \atop a_{2}l_{2} = d-h}l_{1}l_{2}\Big), \ N_{1}^{(h)} = \sum\limits_{a_{1}l_{1} = h, \atop a_{2}l_{2} = d-h}l_{1}l_{2}. $$
In particular, 
$$N_{1} = \sum\limits_{h=1}^{\frac{d-1}{2}} N_{1}^{(h)} = \sum\limits_{h=1}^{\frac{d-1}{2}} \sum\limits_{a_{1}l_{1} = h, \atop a_{2}l_{2} = d-h}l_{1}l_{2} $$
$$ = \frac{1}{2} \sum\limits_{a_{1}l_{1}+a_{2}l_{2} = d}l_{1}l_{2}
 = \frac{1}{2}\sum\limits_{h=1}^{d-1}\sigma_{1}(h)\sigma_{1}(d-h). $$

Since the term $N_{2}$ does not appear in the slope formula, cf. Theorem \ref{slope}, we leave the enumeration of $N_{2}$ to the reader. 

Finally for $N_{3}$, we have 
 $$ N_{3} = \frac{1}{d!} |Cov_{3}|. $$ 
 
 For an element $(\alpha, \beta, \gamma_{1}, \gamma_{2})\in Cov_{3}$, $\gamma_{1}$ and $\gamma_{2}$ have one common letter. Assume that 
 $\gamma_{1} = (ac)$ and $\gamma_{2} = (ab)$, then $\alpha^{-1}\beta\alpha = \beta (abc)$. Note that this is exactly the case we analyzed in \cite[3.1]{C}, though there only the case $d$ prime was considered. 
 The key point is that $\beta$ and $\beta (abc)$ are in the same conjugacy class. $\beta$ can be of type either $(l^{a}), la =d$ or 
 $(l_{1}^{a_{1}}l_{2}^{a_{2}}), l_{1} > l_{2}, a_{1}l_{1}+a_{2}l_{2} = d.$
If $\beta$ is of type $(l^{a})$, it has $\frac{d!}{a!l^{a}}$ options. For a fixed $\beta$, there are $3a{l \choose 3}$ choices for $\gamma_{1},\gamma_{2}$, and then there are $l^{a}(a-1)!$ choices for $\alpha$. 
If $\beta$ is of type $(l_{1}^{a_{1}}l_{2}^{a_{2}}),$ there are $\frac{d!}{l_{1}^{a_{1}}a_{1}!l_{2}^{a_{2}}a_{2}!}$ choices for $\beta$. Fix a choice, and there are 
$3a_{1}l_{1}a_{2}l_{2}$ choices for $\gamma_{1}, \gamma_{2}$. For fixed $\beta, \gamma_{1}, \gamma_{2}$, there are $l_{1}^{a_{1}}(a_{1}-1)!l_{2}^{a_{2}}(a_{2}-1)!$ choices for $\alpha$. 
Eventually we get 
$$ N_{3} = \frac{1}{2}\sum\limits_{l|d}l(l-1)(l-2) + 3\sum\limits_{a_{1}l_{1}+a_{2}l_{2} = d, \atop l_{1} > l_{2}}l_{1}l_{2} $$
$$ = \frac{3}{2}\Big(\sum\limits_{h=1}^{d-1}\sigma_{1}(h)\sigma_{1}(d-h)\Big)- \big(\frac{3}{2}d - 1\big)\sigma_{1}(d) + \frac{1}{2}\sigma_{3}(d). $$ 

\begin{corollary}
\label{relation} 
When $g=2$, the following equality holds for $N_{0}, N_{1}$ and $N_{3}$: 
$$ 5N_{3} = 27N_{1} - 9N_{0} .$$ 
\end{corollary}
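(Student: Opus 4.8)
The plan is to substitute the three closed forms just obtained and reduce the claimed relation to a single classical identity for divisor sums. Write $S = \sum_{h=1}^{d-1}\sigma_{1}(h)\sigma_{1}(d-h)$, so that the enumeration above reads $N_{0} = \frac{d-1}{2}\sigma_{1}(d)$, $N_{1} = \frac{1}{2}S$, and $N_{3} = \frac{3}{2}S - \big(\frac{3}{2}d-1\big)\sigma_{1}(d) + \frac{1}{2}\sigma_{3}(d)$. All quantities entering the corollary are thus expressed in terms of the three numbers $S$, $\sigma_{1}(d)$ and $\sigma_{3}(d)$, and the whole statement becomes an identity among these.

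First I would form the difference $5N_{3} - (27N_{1} - 9N_{0})$ and collect the coefficients of $S$, $\sigma_{1}(d)$ and $\sigma_{3}(d)$ separately. The $S$-terms combine to $\big(\tfrac{15}{2}-\tfrac{27}{2}\big)S = -6S$; the $\sigma_{1}(d)$-coefficient is $-5\big(\tfrac{3}{2}d-1\big)+\tfrac{9(d-1)}{2} = \tfrac{1-6d}{2}$; and the $\sigma_{3}(d)$-coefficient is $\tfrac{5}{2}$. Hence
$$5N_{3} - (27N_{1} - 9N_{0}) = -6S + \frac{1-6d}{2}\,\sigma_{1}(d) + \frac{5}{2}\,\sigma_{3}(d),$$
so after clearing the factor $2$ the corollary is \emph{equivalent} to the convolution identity
$$12\sum_{h=1}^{d-1}\sigma_{1}(h)\sigma_{1}(d-h) = 5\sigma_{3}(d) + (1-6d)\,\sigma_{1}(d).$$

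Next I would establish this convolution identity, which is classical and holds for all $d\geq 1$ (so in particular under the standing hypothesis that $d$ is odd). The cleanest route is through Eisenstein series: with $E_{2} = 1 - 24\sum_{n\geq 1}\sigma_{1}(n)q^{n}$ and $E_{4} = 1 + 240\sum_{n\geq 1}\sigma_{3}(n)q^{n}$, Ramanujan's identity $E_{2}^{2} = E_{4} + 12\,q\frac{dE_{2}}{dq}$ holds on $\mathrm{SL}_{2}(\mathbb Z)$. Comparing coefficients of $q^{d}$ — the left side contributing $-48\sigma_{1}(d) + 576\,S$ and the right side contributing $240\sigma_{3}(d) - 288\,d\,\sigma_{1}(d)$ — and dividing by $48$ yields exactly $12S = 5\sigma_{3}(d) + (1-6d)\sigma_{1}(d)$. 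Alternatively one may simply quote this as a standard identity; it reflects the one-dimensionality of the weight-$4$ space of modular forms for $\mathrm{SL}_{2}(\mathbb Z)$.

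The two reduction steps are purely mechanical, so the only genuine content is the convolution identity. I expect the main point to be the recognition that the combinations $27N_{1}-9N_{0}$ and $5N_{3}$ are engineered precisely so that the explicit $\sigma_{1}(d)$ contributions cancel against the $\sigma_{1}(d)$-part hidden inside $S$, leaving behind only the modular relation between $S$, $\sigma_{1}(d)$ and $\sigma_{3}(d)$; once the algebra is organized around these three symbols this is transparent, and the remaining work is the routine Eisenstein-series coefficient comparison.
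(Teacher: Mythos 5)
Your proof is correct and coincides with the paper's first (direct) proof: both substitute the closed forms for $N_{0}$, $N_{1}$, $N_{3}$ and reduce the claim to the convolution identity $12\sum_{h=1}^{d-1}\sigma_{1}(h)\sigma_{1}(d-h)=5\sigma_{3}(d)+(1-6d)\sigma_{1}(d)$, which the paper quotes from \cite{C} and you instead derive from Ramanujan's relation $E_{2}^{2}=E_{4}+12q\,\frac{dE_{2}}{dq}$ by comparing $q^{d}$-coefficients. Be aware that, exactly like the paper's first argument, this establishes the relation only for odd $d$ (the closed forms were computed under that hypothesis); the paper's second proof, which substitutes the intersection numbers of Propositions \ref{Delta} and \ref{Lambda} into the relation $\lambda=\frac{\delta_{0}}{10}+\frac{\delta_{1}}{5}$ on $\overline{\mathcal M}_{2}$, is independent of the parity of $d$.
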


\begin{proof}
We provide two proofs here. The first one is direct and only for odd $d$. Plugging in the above expressions of those $N_{i}$'s, the desired equality is equivalent to 
$$\sum_{h=1}^{d-1}\sigma_{1}(h)\sigma_{1}(d-h)=\big(\frac{1}{12}-\frac{d}{2}\big)\sigma_{1}(d)+\frac{5}{12}\sigma_{3}(d),$$
which has been proved in \cite[Lemma 3.4]{C}. 

The other proof is indirect but more interesting. The moduli space $\overline{\mathcal M}_{2}$ is special in that the relation 
$\lambda = \frac{\delta_{0}}{10} + \frac{\delta_{1}}{5}$ holds. Note that in Proposition \ref{Delta} and \ref{Lambda}, we have already known the intersection numbers 
$$W\ldotp \lambda = \frac{N_{0}+N_{1}}{4} + \frac{N_{3}}{36}, $$
$$W\ldotp \delta_{0} = 2N_{0}, \ W\ldotp \delta_{1} = 2 N_{1}.$$
Plugging them into $\lambda = \frac{\delta_{0}}{10} + \frac{\delta_{1}}{5}$ will give us exactly what we want. 
\end{proof}

\begin{corollary}
\label{odd}
For $d$ odd and $g=2$, $\lim\limits_{d\to\infty} s(W_{d,2}) = 5$. 
\end{corollary}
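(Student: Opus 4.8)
The plan is to feed the two structural facts we already have---the slope formula of Theorem~\ref{slope} and the linear relation $5N_3 = 27N_1 - 9N_0$ of Corollary~\ref{relation}---into one another so as to eliminate $N_3$ entirely, reducing the whole problem to a single asymptotic ratio. Solving the relation for $N_3 = \frac{1}{5}(27N_1 - 9N_0)$ and substituting into
$$s(W_{d,2}) = \frac{72(N_0+N_1)}{9(N_0+N_1)+N_3},$$
I would clear denominators: the bottom becomes $\frac{1}{5}\big(45(N_0+N_1)+27N_1-9N_0\big) = \frac{1}{5}(36N_0+72N_1)$, so that after cancellation
$$s(W_{d,2}) = \frac{360(N_0+N_1)}{36N_0+72N_1} = \frac{10(N_0+N_1)}{N_0+2N_1}.$$
This closed form makes the answer transparent: writing $r = N_0/N_1$, we have $s(W_{d,2}) = \frac{10(r+1)}{r+2}$, which tends to $5$ precisely when $r\to 0$. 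Everything thus comes down to proving that $N_0$ is asymptotically negligible compared with $N_1$.

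For this I would invoke the explicit formulas derived just above the corollary, namely $N_0 = \frac{d-1}{2}\,\sigma_1(d)$ and $N_1 = \frac{1}{2}\sum_{h=1}^{d-1}\sigma_1(h)\sigma_1(d-h)$. The quantity $N_1$ is the one that grows fastest: using only the trivial bound $\sigma_1(n)\geq n$ term by term,
$$N_1 \geq \frac{1}{2}\sum_{h=1}^{d-1} h(d-h) = \frac{(d-1)d(d+1)}{12} = \Omega(d^3)$$
(equivalently, one reads off the leading term $\frac{5}{24}\sigma_3(d)\sim \frac{5}{24}d^3$ from the Eisenstein-type identity cited in the proof of Corollary~\ref{relation}). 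For the numerator I would use the standard divisor-sum estimate $\sigma_1(d) = O(d\log d)$, which gives $N_0 = O(d^2\log d)$. Combining the two, $r = N_0/N_1 = O\!\big(\frac{\log d}{d}\big)\to 0$, and the closed form yields $s(W_{d,2})\to \frac{10\cdot 1}{2} = 5$.

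The main obstacle is the growth comparison in the second step rather than any conceptual difficulty: one must ensure that the occasional spikes of $\sigma_1(d)$ for highly composite $d$ cannot inflate $N_0$ to the order of $N_1$. This is exactly what the bound $\sigma_1(d) = o(d^{1+\epsilon})$ guarantees---the naive estimate $\sigma_1(d)\leq d^2$ would give only $N_0 = O(d^3)$, the same order as $N_1$, and would not suffice. Once the sharper divisor estimate is in hand the argument is routine. I also note that the same closed form shows $s(W_{d,2})$ approaches $5$ strictly from above, since $r>0$ forces $\frac{10(r+1)}{r+2} > 5$ for every finite $d$; this is consistent with the earlier observation that $N_0 = 0$ can never occur.
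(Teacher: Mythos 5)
Your proposal is correct and follows essentially the same route as the paper: substitute the relation $5N_3 = 27N_1 - 9N_0$ into the slope formula to obtain $s(W_{d,2}) = \frac{10(N_0+N_1)}{N_0+2N_1}$, then observe that $N_0$ grows like $d^2$ (up to logarithmic factors) while $N_1$ grows like $d^3$. Your treatment of the asymptotics is in fact slightly more careful than the paper's, which simply asserts $N_0 \sim O(d^2)$ without addressing the fluctuations of $\sigma_1(d)$; your bound $\sigma_1(d)=O(d\log d)$ closes that small gap cleanly.
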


\begin{proof} Using Corollary \ref{relation}, we have 
$$s(W_{d,2}) = \frac{10(N_{0}+N_{1})}{N_{0}+2N_{1}}$$ 
which goes to 5 for large $d$, since 
asymptotically $N_{0}\sim O(d^{2})$ and $N_{1} \sim O(d^{3})$. 
\end{proof}

This provides a proof of Corollary \ref{g2} for the case $d$ odd. We leave the other case $d$ even for the reader. 

\begin{remark}
We already mentioned in the introduction section that the one-parameter family $W$ may not be as good as $Y$ from the viewpoint of bounding slopes. 
For $g=2$, a curve $B$ in $\overline{\mathcal M}_{2}$ has the sharp bound 10 as its slope if and only if $B$ does not meet $\Delta_{1}$. 
$Y$ does not meet $\Delta_{i}$ for any $i > 0$, cf. \cite[Remark 3.6]{C}. But $W$ meets $\Delta_{1}$ since $N_{1}$ is non-zero. It seems 
that the most degenerate case $Y$ probably should give us the best bound for $s_{g}$ among all one-parameter families of covers of elliptic curves.  
This is also one of the reasons why we do not pursue a further estimate for $s(W)$ in general. 
\end{remark}

\subsection{The slope of $W_{2,g}$}\ 

When $d=2$, let us consider double covers of an elliptic curve. This case is easy to analyze since we are in $S_{2}$.  
For $(\alpha, \beta, \gamma_{1}, \ldots, \gamma_{2g-2})\in Cov$, all $\gamma_{i}$'s are the same simple transposition $(12)$ permuting the two sheets of the cover. 
Then $\alpha, \beta$ can take any elements in $S_{2}$. There are in total four different combinations and all of them are not equivalent, so $N=4$. In particular, $N_{0} = 3, N_{1} = 1, N_{2} = N_{3} = 0$. 
By the slope formula, we get $s(W_{2,g}) = 8$ a constant. This example was also noticed by Xiao long time ago, cf. \cite[Ex. 3]{X}. 

Moreover, in this case the four equivalence classes in $Cov/\sim$ can be sent to each other by the monodromy actions in Theorem \ref{monodromy}. Hence, $W_{2,g}(E)$ is an irreducible 
degree 4 covers of $E$. If an effective divisor $D$ on \Mg has slope lower than 8, $D\ldotp W_{2,g}(E) < 0$ and $D$ must contain the image of $W_{2,g}(E)$ in $\overline{\mathcal M}_{g}$. 
Therefore, we have the following conclusion. 

\begin{corollary}
The base locus of an effective divisor $D$ with slope lower than 8 on \Mg contains the locus of genus $g$ curves that admit a double cover to an elliptic curve. 
\end{corollary}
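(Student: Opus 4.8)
The plan is to leverage the two facts already in hand: the irreducible one-parameter family $W_{2,g}(E)$ has slope exactly $8$, and an effective divisor meeting an irreducible curve negatively must contain that curve. First I would fix an arbitrary smooth genus $g$ curve $C$ carrying a double cover $f_C\colon C\to E$ to an elliptic curve. By Riemann--Hurwitz such a cover is simply branched at exactly $2g-2$ points of $E$; singling out one of them as the moving branch point and fixing the remaining $2g-3$, the class $[C]$ lies on the image in $\overline{\mathcal M}_g$ of the associated family $W_{2,g}(E)$. It therefore suffices to prove that the image of every such $W_{2,g}(E)$ lies in the base locus of $D$.

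The core step is the intersection number $D\ldotp W_{2,g}(E)$. Write $D=a\lambda-\sum_i b_i\delta_i$ and set $b:=\min_i\{b_i\}$, so that $s(D)=a/b<8$. Specializing Propositions \ref{Delta} and \ref{Lambda} to $d=2$ (where $N_0+N_1=4$ and $N_2=N_3=0$) gives $\lambda\ldotp W=k>0$, while Theorem \ref{slope} yields $(\delta\ldotp W)/(\lambda\ldotp W)=s(W_{2,g})=8$. Using $b_i\ge b$ and $\delta_i\ldotp W\ge 0$ for every $i$, I would bound
\[
D\ldotp W \;\le\; a(\lambda\ldotp W)-b(\delta\ldotp W) \;=\; (\lambda\ldotp W)(a-8b) \;<\; 0,
\]
the last inequality because $a<8b$. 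This estimate uses only $\delta_i\ldotp W\ge 0$, so it is unaffected by how the boundary contributions distribute among $\delta_0$ and the $\delta_i$ with $i\ge 1$ (a distribution that does depend on $g$).

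With $D\ldotp W_{2,g}(E)<0$ in place, irreducibility finishes the argument. Since the monodromy actions of Theorem \ref{monodromy} fuse all four classes of $Cov/\sim$, the cover $W_{2,g}(E)\to E$ is irreducible and its image $B=f(W_{2,g}(E))\subset\overline{\mathcal M}_g$ is an irreducible curve, where $f$ denotes the natural map to $\overline{\mathcal M}_g$. If $B$ were not contained in the support of some effective representative $D'\ge 0$ of the class $D$, then $f^*D'$ would be an effective divisor of non-negative degree on $W_{2,g}(E)$, contradicting $\deg f^*D'=D\ldotp W<0$. Hence every effective representative of $D$ contains $B$, i.e.\ $B$ lies in the base locus of $D$. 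Letting $E$ and the $2g-3$ fixed branch points vary sweeps out the full locus of bielliptic genus $g$ curves, so this locus lies in the base locus of $D$.

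I expect the only genuinely non-formal ingredient to be the irreducibility of $W_{2,g}(E)$: the negativity-implies-containment principle is automatic, but it would become vacuous should $W_{2,g}(E)$ split into several components, as then only those components meeting $D$ negatively would be forced into the base locus. I would therefore treat the verification that $g_\alpha,g_\beta,g_1,\ldots,g_{2g-3}$ act transitively on the four-element set $Cov/\sim\,\subset S_2$ as the step demanding care; the remainder is a direct specialization of the general slope formula.
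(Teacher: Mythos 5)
Your proof is correct and follows essentially the same route as the paper: specialize to $d=2$ where $N_0+N_1=4$ and $N_2=N_3=0$ so that $s(W_{2,g})=8$, use transitivity of the monodromy actions on the four classes of $Cov/\sim$ to get irreducibility of $W_{2,g}(E)$, and conclude from $D\ldotp W_{2,g}(E)<0$ that its image lies in the base locus. The only additions are details the paper leaves implicit (the Riemann--Hurwitz check that a bielliptic cover is simply branched at $2g-2$ points, the bound on $D\ldotp W$ via $\min_i b_i$ together with $\delta_i\ldotp W\geq 0$, and sweeping out the full bielliptic locus by varying $E$ and the fixed branch points), all of which are sound.
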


\subsection{The genus of $W_{d,g}$}\ 

We can easily obtain the genus of $W_{d,g}$ by Proposition \ref{ram}. Since $W_{d,g}\rightarrow X$ is degree $N$ finite map triply ramified at 
a point corresponding to a local degeneration of covers in $Cov_{3}$, Riemann-Hurwitz says that 
$$2g(W_{d,g}) - 2 =  N(2g(X)-2) + 2k N_{3}. $$   Hence, we have the genus formula for $W_{d,g}$. 

\begin{proposition} The genus of $W_{d,g}$ equals 
\label{genusw}
$$g(W_{d,g}) = 1 + N(g(X) -1 ) + k N_{3}. $$ 
\end{proposition}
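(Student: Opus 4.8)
The plan is to obtain the genus of $W_{d,g}$ by applying the Riemann--Hurwitz formula to the finite morphism $\pi\colon W_{d,g}\to X$, using the two facts already available: that $\pi$ has degree $N=N_0+N_1+N_2+N_3$, recorded in the introduction, and that its ramification is completely governed by Proposition \ref{ram}.

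First I would write down the Riemann--Hurwitz identity
$$2g(W_{d,g})-2=N\bigl(2g(X)-2\bigr)+\sum_{q}\bigl(e_q-1\bigr),$$
the sum running over the ramification points $q$ of $\pi$, with $e_q$ the local ramification index. Since $\pi$ is unramified over $X^0$ by Proposition \ref{ram}, the only contributions come from points lying over the finitely many special values $b_1,\dots,b_k\in X$ where two of the sections $\Gamma_i$ cross.

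Next I would compute the contribution over a single $b_l$. By Proposition \ref{ram}, the sheets of $\pi$ corresponding to classes in $Cov_0$, $Cov_1$ and $Cov_2$ are carried to themselves as $b$ loops around $b_l$ and so are unramified, while the sheets corresponding to classes in $Cov_3/\sim$ produce triple ramification, with stable limit the admissible cover of Figure \ref{Cov3} whose local model over the node is $(u,v)\mapsto(u^3,v^3)$. Each such ramification point has $e_q=3$ and therefore contributes $e_q-1=2$, and Proposition \ref{ram} identifies the total contribution over $b_l$ as $2N_3$. As the $k$ special fibers are interchangeable, summing gives $\sum_q(e_q-1)=2kN_3$. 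Substituting into the identity above and dividing by $2$ yields $g(W_{d,g})-1=N(g(X)-1)+kN_3$, which is the claimed formula.

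The one step that carries real content is the local bookkeeping in the previous paragraph: establishing that, over each crossing point $b_l$, the classes in $Cov_0$, $Cov_1$, $Cov_2$ contribute nothing while those in $Cov_3/\sim$ assemble into triple ramification totalling $2N_3$. This is exactly the braid-monodromy analysis carried out in Proposition \ref{ram} (following \cite[Prop.~2.9.1]{HM1}), so it records how the colliding pair of branch points permutes the $N$ sheets; once it is in hand, the remainder is a direct substitution into Riemann--Hurwitz.
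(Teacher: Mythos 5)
Your proof is the paper's proof: Proposition \ref{genusw} is obtained there in one line by applying Riemann--Hurwitz to the degree-$N$ map $W_{d,g}\to X$ and invoking Proposition \ref{ram} to see that all ramification is triple and concentrated over the $k$ crossing points, with total contribution $2kN_3$. You have simply written out the same bookkeeping in more detail, so the approach and the conclusion coincide exactly.
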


For the case $g=2$, $d$ odd and $X = E$, we have a more explicit expression using the enumeration for $N_{3}$ obtained in section 3.1.
\begin{corollary}
\label{genus2} 
When $g=2$, the genus of $W_{d,2}(E)$ equals 
$$ g(W_{d,2}(E)) = 1 + \frac{8}{9} \big( \sigma_{3}(d) - 2d\sigma_{1}(d) + \sigma_{1}(d)\big),$$
where $\sigma_{i}(d) = \sum\limits_{l|d} l^{i}. $
\end{corollary}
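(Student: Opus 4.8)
The plan is to obtain the corollary as a direct specialization of the general genus formula in Proposition \ref{genusw}, fed by the explicit enumeration of $N_{3}$ carried out for $g=2$ in Section 3.1. Proposition \ref{genusw} gives $g(W_{d,g}) = 1 + N(g(X)-1) + kN_{3}$, and this itself rests on Proposition \ref{ram}: the degree-$N$ finite map $W \to X$ is triply ramified exactly over the points where a local degeneration in $Cov_{3}/\sim$ occurs and is unramified elsewhere, so Riemann--Hurwitz yields $2g(W)-2 = N(2g(X)-2) + 2kN_{3}$. First I would specialize this to the model $W_{d,2}(E)$, in which the base is $X=E$.

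The two geometric inputs I must pin down are $g(X)$ and $k$. First, $g(X)=g(E)=1$, so the middle term $N(g(X)-1)$ vanishes identically and the formula collapses to $g(W_{d,2}(E)) = 1 + kN_{3}$. Second, I need $k=\sum_{i<j}\Gamma_{i}\cdot\Gamma_{j}$ for this particular configuration: for $g=2$ the model carries $2g-3=1$ horizontal section $\Gamma_{1}$ together with the diagonal $\Gamma_{2}=\Gamma_{2g-2}$ of $E\times E$, and a horizontal section meets the diagonal transversely in the single point over the value of that section, with no other pairwise meetings, so $k=1$. Hence $g(W_{d,2}(E)) = 1 + N_{3}$. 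The care-point here, which I regard as the main (though shallow) obstacle, is precisely this translation between the fiberwise combinatorial quantity $N_{3}$ — defined in Definition \ref{subsets} purely through monodromy data in $S_{d}$ — and the geometry of the total space: I must confirm that the single special fiber contributes exactly the $Cov_{3}$-type degenerations counted by $N_{3}$, which is what the local analysis of Proposition \ref{ram} guarantees.

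With $g(W_{d,2}(E)) = 1 + N_{3}$ in hand, the remainder is substitution and simplification. I would insert the closed form for $N_{3}$ derived in Section 3.1 for odd $d$, namely the expression assembled from $\sum_{h=1}^{d-1}\sigma_{1}(h)\sigma_{1}(d-h)$, $\sigma_{1}(d)$ and $\sigma_{3}(d)$, and then eliminate the convolution sum via the identity $\sum_{h=1}^{d-1}\sigma_{1}(h)\sigma_{1}(d-h) = \big(\tfrac{1}{12}-\tfrac{d}{2}\big)\sigma_{1}(d) + \tfrac{5}{12}\sigma_{3}(d)$, which is \cite[Lemma 3.4]{C} and is also the engine driving Corollary \ref{relation}. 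Collecting the resulting $\sigma_{3}(d)$ contributions together and the $\sigma_{1}(d)$ contributions together, the whole combination reorganizes into a single rational multiple of $\sigma_{3}(d) - 2d\sigma_{1}(d) + \sigma_{1}(d)$, which is exactly the shape displayed in the statement of Corollary \ref{genus2}. Since the corollary is a plan-level consequence of already-established results, the only genuine work is this bookkeeping; as a consistency check I would verify a small odd value such as $d=3$ or $d=5$, confirming that $1+N_{3}$ lands on a positive integer, as any genus must.
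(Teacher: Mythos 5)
Your route is the paper's route: specialize Proposition \ref{genusw} to the model $W_{d,2}(E)$, so $g(X)=g(E)=1$ kills the middle term, observe that for $g=2$ the single horizontal section meets the diagonal of $E\times E$ transversely in one point so that $k=1$, reduce to $g(W_{d,2}(E))=1+N_{3}$, and then substitute the odd-$d$ enumeration of $N_{3}$ from Section 3.1 via the identity $\sum_{h=1}^{d-1}\sigma_{1}(h)\sigma_{1}(d-h)=\big(\frac{1}{12}-\frac{d}{2}\big)\sigma_{1}(d)+\frac{5}{12}\sigma_{3}(d)$ of \cite[Lemma 3.4]{C}. All of that matches the paper's (very terse) proof, and your identification of $k$ and of the odd-$d$ restriction is correct.

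The gap is precisely at the step you wave through: ``the whole combination reorganizes into a single rational multiple of $\sigma_{3}(d)-2d\sigma_{1}(d)+\sigma_{1}(d)$, which is exactly the shape displayed.'' The shape is right, but the corollary asserts a specific coefficient, and the coefficient is the entire content. Doing the substitution:
$$N_{3}=\tfrac{3}{2}\Big[\big(\tfrac{1}{12}-\tfrac{d}{2}\big)\sigma_{1}(d)+\tfrac{5}{12}\sigma_{3}(d)\Big]-\big(\tfrac{3}{2}d-1\big)\sigma_{1}(d)+\tfrac{1}{2}\sigma_{3}(d)=\tfrac{9}{8}\big(\sigma_{3}(d)-2d\sigma_{1}(d)+\sigma_{1}(d)\big),$$
so your plan, executed honestly, yields $g=1+\frac{9}{8}\big(\sigma_{3}(d)-2d\sigma_{1}(d)+\sigma_{1}(d)\big)$, i.e.\ the \emph{reciprocal} of the printed coefficient $\frac{8}{9}$ (evidently a typo in the paper's statement). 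The sanity check you propose but do not perform would have caught this immediately: for $d=3$, Example \ref{d3g2} gives $N_{3}=9$, hence $1+N_{3}=10$, while the displayed formula gives $1+\frac{8}{9}\cdot 8=\frac{73}{9}$, not even an integer. One further caution on your ``care-point'' paragraph: Proposition \ref{ram} shows the sheets of $Cov_{3}$-type merge in threes at each triple point (the full twist acts on pairs with $|\gamma_{1}\cap\gamma_{2}|=1$ in $3$-cycles), so the number of ramification \emph{points} over the special fiber is $N_{3}/3$, not $N_{3}$; since both you and the paper cite Proposition \ref{genusw} as stated, this does not separate your argument from the paper's, but it is exactly the kind of factor your check at $d=3$ is designed to audit, and it is another reason the check cannot be left as a promise.
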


For the case $d=2$, we also have an expression for the genus of $W_{2,g}$ by using Proposition \ref{genusw} and results from the previous subsection. 
\begin{corollary}
When $d=2$, the genus of $W_{2,g}$ equals 
$$g(W_{2,g}) = 4g(X) - 3.$$ 
\end{corollary}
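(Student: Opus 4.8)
The plan is to apply the general genus formula of Proposition \ref{genusw},
\[
g(W_{d,g}) = 1 + N\bigl(g(X)-1\bigr) + k\,N_{3},
\]
and simply feed in the values of $N$ and $N_{3}$ that are forced by working inside the symmetric group $S_{2}$. First I would invoke the enumeration already carried out for double covers in the subsection on the slope of $W_{2,g}$: since the only simple transposition available in $S_{2}$ is $(1\,2)$, every admissible datum $(\alpha,\beta,\gamma_{1},\ldots,\gamma_{2g-2})\in Cov$ must have all $\gamma_{i}$ equal to $(1\,2)$, while $\alpha$ and $\beta$ range freely over the two elements of $S_{2}$. The commutator relation $\beta^{-1}\alpha^{-1}\beta\alpha=\gamma_{1}\cdots\gamma_{2g-2}$ is automatic because $S_{2}$ is abelian and the right-hand side is the product of an even number of copies of $(1\,2)$, hence the identity; transitivity is automatic because $(1\,2)$ alone is transitive on $\{1,2\}$. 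This yields the four inequivalent classes recorded there, so $N=4$.

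Next I would pin down the crucial term $N_{3}$. Recall that $Cov_{3}$ consists of data with $|\gamma_{1}\cap\gamma_{2}|=1$, i.e.\ $\gamma_{1}$ and $\gamma_{2}$ are distinct transpositions sharing exactly one letter. In $S_{2}$ there is a single transposition, so $\gamma_{1}=\gamma_{2}=(1\,2)$ always, and the overlap is the full set $\{1,2\}$ rather than a single letter. Therefore $Cov_{3}=\emptyset$ and $N_{3}=0$; by the same token $Cov_{2}=\emptyset$, consistent with the stated values $N_{0}=3$, $N_{1}=1$, $N_{2}=N_{3}=0$. This is the only point that genuinely needs checking, and it is immediate.

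With $N=4$ and $N_{3}=0$ in hand, substitution into the formula of Proposition \ref{genusw} gives
\[
g(W_{2,g}) = 1 + 4\bigl(g(X)-1\bigr) + k\cdot 0 = 4g(X)-3,
\]
as claimed. I do not anticipate any real obstacle here: the content of the corollary is entirely contained in the previously established genus formula together with the vanishing $N_{3}=0$, and the proof reduces to the elementary verification above that $S_{2}$ admits no $Cov_{3}$-type degenerations and exactly four monodromy classes.
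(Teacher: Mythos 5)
Your proposal is correct and follows exactly the route the paper takes: apply the genus formula of Proposition \ref{genusw} with the values $N=4$ and $N_{3}=0$ already computed in the $d=2$ subsection (the vanishing of $N_{3}$ being immediate since $S_{2}$ has only one transposition). Nothing further is needed.
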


\begin{remark}
Notice that if $W$ is reducible, each component of $W$ also admits a finite map to $X$. As long as we know the subset of $Cov/\sim$ corresponding to a component of $W$, the above 
formulae can be easily modified to work out the genus of that component. 
\end{remark}

\subsection{The components of $W_{d,g}$}\ 

We mentioned before that the monodromy criterion in Theorem \ref{monodromy} may not be easy to apply in practice. However, for small genus or degree, 
we are able to use it to derive some information about the components of $W_{d,g}$. Here we introduce another notation $(a_{1}a_{2}\cdots a_{n})$ to denote 
a cycle of a permutation in $S_{d}$ that sends $a_{i}$ to $a_{i+1}, n\leq d$. 

Firstly, let us focus on a concrete example when $g=2$ and $d=3$. 

\begin{example}
\label{d3g2}
For the case $W_{3,2}$, genus 2 triple covers of an elliptic curve, let $S_{3}$ act on the three letters $\{1, 2, 3 \}$. Then
$Cov/\sim$ consists of the following representatives $(\alpha, \beta, \gamma_{1}, \gamma_{2}).$

$(1)\  \alpha = id, \beta = (123), \gamma_{1} = (12), \gamma_{2} = (12); \ (2)\ \alpha = (123), \beta = id, \gamma_{1} = (12), \gamma_{2} = (12); $

$ (3)\  \alpha = (123), \beta = (123), \gamma_{1} = (12), \gamma_{2} = (12); \ (4)\  \alpha = (123), \beta = (132), \gamma_{1} = (12), \gamma_{2} = (12). $

$ (5) \ \alpha = id, \beta = (13), \gamma_{1} = (12), \gamma_{2} = (12); \ (6) \ \alpha = (13), \beta = id, \gamma_{1} = (12), \gamma_{2} = (12); $

$ (7) \ \alpha = (13), \beta = (13), \gamma_{1} = (12), \gamma_{2} = (12); \ (8) \ \alpha = (23), \beta = (12), \gamma_{1} = (12), \gamma_{2} = (13); $

$ (9) \ \alpha = (123), \beta = (12), \gamma_{1} = (12), \gamma_{2} = (13); \ (10)\  \alpha = (12), \beta = (13), \gamma_{1} = (12), \gamma_{2} = (13); $

$ (11)\  \alpha = (123), \beta = (13), \gamma_{1} = (12), \gamma_{2} = (13); \ (12)\  \alpha = (13), \beta = (23), \gamma_{1} = (12), \gamma_{2} = (13); $

$ (13) \ \alpha = (123), \beta = (23), \gamma_{1} = (12), \gamma_{2} = (13); \ (14) \ \alpha = (12), \beta = (132), \gamma_{1} = (12), \gamma_{2} = (13); $

$ (15) \ \alpha = (13), \beta = (132), \gamma_{1} = (12), \gamma_{2} = (13); \ (16) \ \alpha = (23), \beta = (132), \gamma_{1} = (12), \gamma_{2} = (13). $

Hence, $N = 16$. Moreover, $Cov_{0}/\sim$ consists of cases (1)-(4), $Cov_{1}/\sim$ consists of cases (5)-(7), $Cov_{3}/\sim$ consists of cases (8)-(16) and 
$Cov_{2} = \emptyset$. Therefore, we get $N_{0} = 4, N_{1} = 3, N_{2} = 0, N_{3} = 9$. Note that they do satisfy the relation in Corollary \ref{relation}. 
The slope of $W_{3,2}$ equals 7 by the slope formula. Finally, the monodromy actions in Theorem \ref{monodromy} act transitively on the 16 equivalence classes of $Cov/\sim$, 
so $W_{3,2}(E)$ is actually irreducible as a 16-sheeted cover of $E$.   
The number of components of $W$ bounds from above the number of components of the Hurwitz space $\mathcal H_{d,g}$, since every component of $\mathcal H_{d,g}$ is an unramified cover of $\mathcal{M}_{1,2g-2}$. 
Hence, $\mathcal H_{3,2}$ is also irreducible. 
\end{example}

Next, we will present an example when $W$ has more than one irreducible components. 

\begin{example}
\label{d4g2}
Consider the case when $g=2$ and $d=4$. 
Let a cover $\pi_{1}$: $C_{1}\rightarrow E$ correspond to an element 
$\alpha = \beta = (13)(24), \gamma_{1} = \gamma_{2} = (12)$ in $Cov$. The group generated by $\alpha, \beta, \gamma_{1}, \gamma_{2}$ is a proper subgroup of $S_{4}$. 
On the other hand, let $\pi_{2}$: $C_{2}\rightarrow E$ correspond to another element $\alpha = \beta = (1234), \gamma_{1} = \gamma_{2} = (12)$ in $Cov$. This time 
$\alpha, \beta, \gamma_{1}, \gamma_{2}$ generate the whole group $S_{4}$. Since the conjugacy type of the subgroup generated by $\alpha, \beta, \gamma_{1}, \gamma_{2}$ 
is invariant under the monodromy actions, the two covers $\pi_{1}$ and $\pi_{2}$ are necessarily contained in different components of $W_{4,2}(E)$. Hence, $W_{4,2}(E)$ is reducible. 
\end{example}

The conjugacy type of the subgroup generated by $\alpha, \beta, \gamma_{1}, \ldots, \gamma_{2g-2}$ in $S_{d}$ should be considered as a \emph{parity} to distinguish connected components of 
$\mathcal H_{d,g}$. Since $\mathcal H_{d,g}$ is a finite unramified cover of $\mathcal{M}_{1,2g-2}$, it does not make a difference to consider connected or irreducible components of $\mathcal H_{d,g}$. 
To the author's best knowledge, there are almost no results in general for the number of components of $\mathcal H_{d,g}$ when $g$ is large. As for the beginning case $g=2$, Kani studied 
the Hurwitz space $\mathcal H_{d,2}$ over a field $K$ of characteristic prime to $2d$. One of the related results is the following, cf. \cite[Cor. 1.3]{Ka} 

\begin{proposition}
If $d$ is odd, then $\mathcal H_{d,2}$ has $\sum\limits_{n|d, \ n<d}\sigma_{1}(n)$ irreducible components.  
\end{proposition}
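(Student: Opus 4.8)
The plan is to realize the irreducible components of $\mathcal H_{d,2}$ as the orbits of the global monodromy action of Theorem \ref{monodromy} (together with the two branch-point moves) on the set $Cov/\sim$. Since $\mathcal H_{d,2}\to \mathcal M_{1,2}$ is finite and unramified, connected and irreducible components coincide and are in bijection with these orbits. The strategy is then to attach to each orbit a canonical invariant coming from a factorization of the cover, and to prove that this invariant is complete, i.e. that two covers lie in the same orbit if and only if their invariants agree.

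First I would set up the canonical factorization $C\xrightarrow{\psi}E'\xrightarrow{\rho}E$, where $\rho$ is the \emph{maximal intermediate cover that is unramified over $E$}. Because $E$ is elliptic, any connected unramified cover is an isogeny, so $E'$ is an elliptic curve and $\rho$ has degree $m$ for some $m\mid d$; put $e=d/m$. The decisive group-theoretic input is that, for the monodromy group $G=\langle\alpha,\beta,\gamma_1,\gamma_2\rangle$, any transposition in $G$ must move two points lying in a common block of a nontrivial block system: otherwise it would have to interchange two blocks while fixing all of their remaining points, forcing the blocks to be singletons. Hence $\gamma_1,\gamma_2$ act trivially on every nontrivial block system, the relation $\beta^{-1}\alpha^{-1}\beta\alpha=\gamma_1\gamma_2$ descends to the identity on blocks, and every intermediate cover coming from a block system is unramified. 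Therefore $E'$ is intrinsic, $\psi:C\to E'$ has no nontrivial block system, and $\psi$ is primitive; since it is still simply branched at two points with transposition local monodromy, Jordan's theorem gives $\mathrm{Mon}(\psi)=S_e$.

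Next I would carry out the count. The abelian quotient recording the étale part $E'\to E$ is a monodromy invariant: passing to the images $\bar\alpha,\bar\beta$ in the transitive $\mathbb Z^2$-action on the $m$ blocks, every formula in Theorem \ref{monodromy} collapses (e.g. $g_\alpha(\bar\beta)=\bar\beta\,\bar\gamma_{2g-2}=\bar\beta$, and similarly $g_\beta,g_i$ fix $\bar\alpha,\bar\beta$), so the sublattice $\Lambda'\subseteq\langle\alpha,\beta\rangle\cong\mathbb Z^2$ cut out by this action is preserved. Degree-$m$ isogenies $E'\to E$ correspond bijectively to index-$m$ sublattices of $\mathbb Z^2$, of which there are exactly $\sigma_1(m)=\sum_{l\mid m}l$. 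The remaining datum is the primitive $S_e$-cover $\psi$, and the heart of the proof is to show that for a fixed $E'$ all such $\psi$ form a single monodromy orbit. Since $e=1$ would force $C=E'$ to be elliptic, a genuine genus-$2$ cover requires $e\ge 2$, equivalently $m<d$; summing $\sigma_1(m)$ over proper divisors $m$ then yields $\sum_{m\mid d,\,m<d}\sigma_1(m)$, with $m=1$ contributing the primitive $S_d$-covers of $E$ itself.

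The main obstacle is exactly this last connectivity statement: that the Hurwitz space of $S_e$-covers of an elliptic curve simply branched at two points is irreducible. I would prove it by showing that the generators $g_\alpha,g_\beta$ and the braid-type moves of the two branch points act transitively on the full-symmetric tuples, which is the analogue over an elliptic base of the classical Clebsch--Hurwitz irreducibility of simple Hurwitz spaces. It is here that the hypothesis that $d$ is odd is essential: for odd $e$ there is no further obstruction, whereas for even $e$ a parity (spin/lifting) invariant of the $S_e$-tuple can split the space into two orbits, which is precisely the source of the ``more subtle'' behavior of the even case noted in Section~3.1. A secondary point demanding care is verifying that the factorization is genuinely canonical and exhaustive, so that distinct pairs $(m,\Lambda')$ label distinct components and every orbit is accounted for.
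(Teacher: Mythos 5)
First, a caveat on the comparison itself: the paper offers no proof of this proposition --- it is quoted from Kani \cite[Cor.\ 1.3]{Ka} --- so there is no in-paper argument to measure your proposal against; it can only be judged on its own terms. Your architecture (canonical factorization $C\to E'\to E$ through the maximal \'etale intermediate curve, the block-system argument showing a transposition must move two letters in a common block, Jordan's theorem forcing $S_e$-monodromy on the primitive piece, and the count of $\sigma_1(m)$ index-$m$ sublattices of $\mathbb Z^2$) is sound and close in spirit to how Kani actually organizes the result. Two points still need pinning down. (i) Well-definedness of the ``maximal unramified intermediate cover'': the clean statement is that the orbits of the normal closure $K$ of $\langle\gamma_1,\gamma_2\rangle$ in $G$ form the \emph{finest} block system with non-singleton blocks (any such system is $K$-invariant by your transposition argument, hence a union of $K$-orbits, and the $K$-orbits are non-singletons because $K$ contains a transposition); as written, uniqueness of the maximal $E'$ is asserted rather than shown. (ii) You must fix which monodromy group computes the components. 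The sublattice $\Lambda'$ is \emph{not} invariant under the $SL_2(\mathbb Z)$ change of basis in $(\alpha,\beta)$ that one picks up if the elliptic curve is allowed to vary over $\mathcal M_{1,1}$; it is invariant only under the point-pushing and braid moves, i.e.\ for covers of a \emph{fixed} $E$. That is Kani's setting, and the only one in which the answer is $\sum_{n\mid d,\,n<d}\sigma_1(n)$ rather than a sum over Smith normal forms, but your proof needs to say so explicitly, since the paper's $\mathcal H_{d,g}$ is defined over $\overline{\mathcal M}_{1,2g-2}$.

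The genuine gap is the one you flag yourself: the connectivity of the space of primitive, simply branched, degree-$e$, two-branch-point covers of a fixed elliptic curve $E'$. Everything else in your argument is bookkeeping; this statement \emph{is} the theorem, and invoking ``the analogue over an elliptic base of Clebsch--Hurwitz'' is not a proof of it. The classical Clebsch argument normalizes a long tuple of transpositions by braid moves over $\mathbb P^1$; here there are only two transpositions and essentially all of the monodromy sits in $\alpha,\beta$, so the normalization must be carried out with the point-pushing generators $g_\alpha,g_\beta$ of Theorem \ref{monodromy} (this is in substance the Berstein--Edmonds type irreducibility theorem for positive-genus base, and is Kani's main technical input), and none of that is done. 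I am also skeptical of your stated reason for the hypothesis that $d$ is odd: a spin (theta-characteristic) invariant of a tuple requires even-order ramification, whereas simple branch points give simple zeros of the relevant differential, so the even-degree subtlety in Kani enters elsewhere (through the normalization of the cover and the $2$-torsion of $E$), not through an Arf-type invariant of the $S_e$-tuple. Since you do not use that remark it does not break the argument, but it should not be offered as the explanation for the parity assumption.
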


Thus $\mathcal H_{d,2}$ is irreducible if and only if $d$ is prime. It coincides with our analysis in Example \ref{d3g2} and \ref{d4g2}. We also post the following interesting question for a further study. 

\begin{question}
How many irreducible components does $\mathcal{H}_{d,g}$ have? In particular, does the conjugacy type of $\langle \alpha, \beta, \gamma_{1}, \ldots, \gamma_{2g-2}\rangle$ uniquely 
determine a component of $\mathcal{H}_{d,g}$? 
\end{question}

Department of Mathematics, Harvard University, 1 Oxford Street, Cambridge, MA 02138 \par
{\it Email address:} dchen@math.harvard.edu

\end{document}